\documentclass[preprint,11pt]{elsarticle}
\usepackage{natbib}
\usepackage{url}
\usepackage{bm}
\usepackage{amsthm,amsmath,amsfonts,amssymb}
\usepackage{color}
\usepackage{tikz}
\usepackage{graphicx}
\usepackage{enumitem}
\usepackage{verbatim}
\newcommand*{\dt}{%
{\mbox{\raisebox{0.07em}{\large\bfseries .}}}}
  
\fontsize{10pt}{10pt}\selectfont
\fontsize{9pt}{9pt}\selectfont

\newtheorem{Lemma}{Lemma}
\newtheorem{Corollary}{Corollary}
\newtheorem{Proposition}{Proposition}

\theoremstyle{remark}
\newtheorem{Example}{Example}
\newtheorem{Remark}{Remark}

\providecommand{\keywords}[1]
{
  \small	
  \textbf{\textit{Keywords---}} #1
}

\def\pgf{\emph{p.g.f.~}}
\def\pgfl{\emph{p.g.fl.~}}
\def\iid{\emph{i.i.d.}~}

\def\a{{\mathfrak a}}
\def\B{{\mathfrak B}}
\def\b{{\mathfrak b}}
\def\g{{\mathfrak g}}
\def\w{{\mathfrak w}}
\def\m{{\mathfrak m}}
\def\n{{\mathfrak n}}
\def\p{{\mathfrak p}}
\def\v{{\mathfrak v}}

\def\z{{\mathfrak z}}
\def\s{{\mathfrak s}}
\def\r{{\mathfrak r}}
\def\M{{\mathfrak M}}
\def\V{{\mathfrak V}}
\def\Z{{\mathfrak Z}}
\def\d{{\textcolor{black}{q}} }
\def\N{{\textcolor{black}{d}}}
\usepackage[para]{footmisc} 

\newpageafter{abstract}
\begin{document}
\begin{frontmatter}
\title{
{A Class of Gaussian Fields on $\mathbb{Z}_q^d$}\\
}
 \author{Robert Griffiths}
\affiliation{
organization={School of Mathematics},
            addressline={Monash University},
            country={Australia}
         }
	
\ead{Bob.Griffiths@Monash.edu}
 \author{Shuhei Mano}
\affiliation{
organization={The Institute of Statistical Mathematics},
            addressline={Tokyo},
            country={Japan}
         }	
\ead{smano@ism.ac.jp}

\date{}

\begin{abstract}
Gaussian fields $(g_x)$ on $\mathbb{Z}_q^d$ are constructed from 
a class of reversible long range random walks $(X_t)_{t\in \mathbb{N}}$ on $\mathbb{Z}_\d^\N$ in \cite{GM2025a}. The construction is from taking the covariance function of $(g_x)$ as $(1-\alpha)G(x,y;\alpha)$, where $G(x,y;\alpha)$ is the Green function of a random walk with killing in each transition at rate $1-\alpha$. A decomposition of the Gaussian field into a sum of independent Gaussian random variables is made. By letting $q\to \infty$ the Gaussian field becomes defined from an infinite-dimensional random walk on a torus. The random walk model is also extended to $d=\infty$ by considering a de Finetti random walk where entries in the increments of the random walk are exchangeable. A limit Gaussian field on $\mathbb{R}^d$ arises from a central limit theorem approach. The transform of this Gaussian field, which is again a Gaussian field, is calculated. It has a simpler covariance matrix than the original field.
The Hamiltonian connected to the Gaussian field is calculated. A limit theorem for the partition function arising from the Hamiltonian is found.

\medskip

\noindent
\keywords \ Gaussian Fields on $\mathbb{Z}_q^d$, Long range random walks, Green functions of random walks, Multivariate Krawtchouk polynomials.

\end{abstract}

\end{frontmatter}

%
%
\section{Introduction}
\cite{GM2025a} develop a theory of a class of reversible long range random walks $(X_t)_{t\in \mathbb{N}}$ on $Z_\d^\N$. 
 In a transition
\begin{equation}
X_{t+1} = X_t + Z_t \mod \d,
\label{rweqn:000}
\end{equation}
where $(Z_t)_{t=1}^\infty$ are independent and identically distributed (\iid).
In the $\N$-dimensional random walk 
$(X_t)_{t\in \mathbb{N}}$ each entry performs a random walk on 
${\cal V}_\d = \{0,1,\ldots, \d-1\}$, $\mod \d$. The state space of $(X_t)$ is denoted by ${\cal V}_{\d,\N}= {\cal V}_\d^\N$.
A characterization is made of this class of random walks which have a spectral expansion of the transition function with eigenvectors $(\theta_1^{x\dt r})_{r\in {\cal V}_{\d,\N}}$,
where $\theta_1=e^{2\pi i/q}$. In this characterization the eigenvalues have the form
$\rho_r=\mathbb{E}\big [\theta_1^{V\dt r}\big ]$ where $V\in {\cal V}_{q,d}$ is  a random variable. The resulting random walk has entries which are mixtures of circulant random walks on ${\cal V}_{\d,\N}$, controlled by $V$.
The random walk is extended to a continuous state space on a $d$-dimensional torus in Proposition \ref{torusprop}.
Although $(X_t)$ is a discrete time process a Poisson embedding forms a process in continuous time in Section \ref{ctstime}.

We consider the Green function $G(x,y;\alpha)$ where there is a probability of killing in each transition of $1-\alpha$, $\alpha \in (0,1)$ in the processes $(X_t)$. The Green function is then finite because the killed process is transient. 
A spectral expansion of $G(x,y;\alpha)$ is derived in Proposition \ref{proposition:expansions}. Attention is given to the particular case when $V$, and therefore $X_t$, have an exchangeable distribution of entries. Then it is appropriate to consider the counts of entries $n_t$ where $n_t[j]=|\{j:X_t[k]=j, k = 1,\ldots, d\}|$. The stationary distribution of $(n_t)$ is multinomial$\big (d,(1/q)\big )$.
Eigenvectors in the count process $(n_t)$ are multivariate Krawtchouk polynomials with a basis 
$(\theta_1^l)_{l\in {\cal V}_q}$, \citep{G1971,M2010,M2011,DG2014}.

It is possible to consider a random walk on ${\cal V}_{q,\infty}$, provided the increments $(Z_t)$ are well defined. A natural choice is to take the entries of $Z_t$ to be from a 
de Finetti sequence. The entries then have an exchangeable distribution. Considering $d$ entries from the de Finetti sequence, the form of
$G(x,y;\alpha)$ is derived. It is shown in Lemma \ref{mainlemma} that the eigenvalues in a spectral expansion of $(1-\alpha)G(x,y;\alpha)$, grouped by indices $l$ have the form of moments $\mathbb{E}\Big [\prod_{k=1}^{\d-1}Y[k]^{l_k}\Big ]$,
where $Y$ is random with $q$ complex entries derived from a $q$-dimensional point process.

In this paper a study is made of complex Gaussian free fields $(g_x)$, indexed by ${\cal V}_{q,d}$,
with means zero and covariance function defined by
\begin{equation*}
\text{Cov}(g_x,g_y) = (1-\alpha)G(x,y;\alpha),
\label{cov:100}
\end{equation*}
or a variation when elements of $x,y$ are grouped by type. An assumption is made that the eigenvalues of the Green function are real which is equivalent to $G(x,y;\alpha) = {G(y,x;\alpha)}$. Decomposition expressions of the Gaussian field into a sum of independent Gaussian variables are made. In a model where the entries of $X_t$ are exchangeable a central limit theorem is found for the Green function and a limit Gaussian field on $\mathbb{R}^d$ constructed 
in Proposition \ref{gfclt:00} when $d\to \infty$. The Gaussian field of a transform is calculated in Proposition \ref{gftransform:00}. The covariance matrix of the transform is simpler than in the actual field.

In Section \ref{Hamiltonian:00} we consider the Hamiltonian which arises from the Gaussian field $(g_x)$, and the underlying Markov chain which it is constructed from. A limit theorem for the partition function 
$Z=\mathbb{E}\big [\sum_{x\in {\cal V}_{q,d}}e^{\beta g_x}\big ]$ is calculated when $d \to \infty$. Additionally a limit result when $\alpha \to 1, d\to \infty$ is calculated.

It is natural to think of the Potts model because of the huge literature on this model and the fact that our Gaussian Fields are defined on $\mathbb{Z}_q^d$. 
  The analogy of our model with the Potts model is briefly discussed in Section \ref{Pottssection}.

\section{Stochastic processes from circulants}
Circulant matrices play an important role in random walks (\ref{rweqn:000}). 
A circulant transition probability matrix in 1-dimension is 
\begin{equation*}
P(v) =
\begin{bmatrix}
v_0&v_1&\cdots &v_{\d-1}\\
v_{\d-1}&v_0&\cdots&v_{\d-2}\\
\cdots&\cdots&\cdots &\cdots\\
\cdots&\cdots&\cdots &\cdots\\
v_1&v_2&\cdots &v_0
\end{bmatrix}.
\end{equation*}
It is very well known and exploited in \cite{GM2025a} that a spectral expansion of $P^t(v)$ is 
\begin{equation*}
P^{(t)}_{xy} = \frac{1}{\d}\sum_{r=0}^{\d-1}\rho_r^t\theta_r^{x-y}
\end{equation*}
where
$\theta_r = e^{2\pi i/\d \dt r}$, the $r^{\text{th}}$ root in the $q^{\text{th}}$ root of unity, and
$\rho_r = \sum_{k=0}^{\d-1}v_k\theta_r^k$.
In the $\N$-dimensional random walk (\ref{rweqn:000}) each entry in $X_t$ behaves as a circulant random walk with random jump probabilities.
Assume that transitions of entries of $X_t$ to $X_{t+1}$ are made according transition matrices $P(V[1]),\ldots, P(V[\N])$ where $V=(V[1],\ldots V[\N])$. The entries of $V$ are random on ${\cal V}_\d$ and not necessarily independent.
%
The probability of a transition $X_t=x$ to $X_{t+1}=y$ is then
\begin{equation*}
P_{xy}=\mathbb{E}\big [\prod_{k=1}^\N P_{x[k] y[k]}(V[k])\big ]
\label{pxy:00}
\end{equation*}
where expectation is over $V$. 
The eigenvectors of $P_{xy}$ are
\begin{equation*}
\prod_{k=1}^\N\theta_{r[k]}^{x[k]}=\theta_1^{x\dt r},
\label{ev:001}
\end{equation*}
where $r\in {\cal V}_{q,d}$ and a complex conjugate form. The eigenvalues are 
\begin{equation*}
\rho_r =   \mathbb{E}\big [\prod_{k=1}^\N\theta_{r[k]}^{V[k]}\big ]
= \mathbb{E}\big [\theta_1^{V\dt r}\big ].
\label{eigenval:123}
\end{equation*}
A characterization of ${\cal V}_\d\times {\cal V}_\d$ transition matrices $P_{xy}$, when $d=1$, with these eigenvectors
in \cite{GM2025a} is the following.
\begin{Proposition}{(\cite{GM2025a}.)}\label{pcirc}
Let $x,y \in  {\cal V}_{\d,\N}$. Then
\begin{equation}
P_{xy} = \frac{1}{\d^\N}\sum_{r\in {\cal V}_\d }\rho_r
\theta_1^{(x-y)\dt r}
\label{spectral:77}
\end{equation}
is non-negative and therefore a transition probability matrix if and only if
$ \rho_r  = \mathbb{E}\big [\theta_1^{V\dt r}\big ]$ for a random variable $V$ on ${\cal V}_{\d,\N}$.
The stationary distribution is uniform in ${\cal V}_{\d,\N}$.
\end{Proposition}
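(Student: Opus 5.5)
The plan is to use Fourier inversion on the finite abelian group ${\cal V}_{\d,\N}=\mathbb{Z}_\d^\N$, whose characters are $x\mapsto\theta_1^{x\dt r}$. We read the sum in (\ref{spectral:77}) as running over $r\in{\cal V}_{\d,\N}$, and we assume $\rho$ is a given function on ${\cal V}_{\d,\N}$ normalised by $\rho_0=1$, so that rows sum to one. The key observation is that $P_{xy}$ depends only on $z=y-x \bmod \d$; write it as $P_{xy}=p(y-x)$. Then (\ref{spectral:77}) says exactly that $\rho$ is the Fourier transform of $p$. We will invert this relation.

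\emph{Sufficiency.} Suppose $\rho_r=\mathbb{E}[\theta_1^{V\dt r}]=\sum_{v}\mathbb{P}(V=v)\theta_1^{v\dt r}$. Substituting into (\ref{spectral:77}) and exchanging the finite sums gives
\begin{equation*}
P_{xy}=\sum_{v}\mathbb{P}(V=v)\,\frac{1}{\d^\N}\sum_{r}\theta_1^{(v+x-y)\dt r}.
\end{equation*}
Next we apply the orthogonality relation $\d^{-\N}\sum_r\theta_1^{w\dt r}=\delta_{w,0}$, which follows by factorising over coordinates and summing a geometric series of $\d$th roots of unity. This yields $P_{xy}=\mathbb{P}(V=y-x)\ge 0$. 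In other words, the walk is the one in (\ref{rweqn:000}) with $Z_t\stackrel{d}{=}V$.

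\emph{Necessity.} Suppose $P_{xy}\ge 0$ with rows summing to $1$, so that $p(z)=P_{0z}$ is a probability mass function on ${\cal V}_{\d,\N}$. Multiply (\ref{spectral:77}) (with $x=0$, $y=z$) by $\theta_1^{z\dt s}$ and sum over $z$. Orthogonality isolates $\rho_s=\sum_z p(z)\theta_1^{z\dt s}=\mathbb{E}[\theta_1^{V\dt s}]$, where $V\sim p$. The one point requiring care is sign conventions: we must match $\theta_1^{(x-y)\dt r}$ against $\theta_1^{V\dt r}$ consistently so that $V$ is distributed as $y-x$ rather than $x-y$. If the convention came out reversed, we would replace $V$ by $-V \bmod \d$, which again gives a random variable on ${\cal V}_{\d,\N}$, so the characterisation is unaffected.

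\emph{Stationarity.} Column sums are $\sum_x p(y-x)=\sum_z p(z)=1$, so $P$ is doubly stochastic and the uniform distribution is stationary. The main obstacle is merely bookkeeping: the normalisation $\rho_0=1$ and the direction of the difference. There is no analytic difficulty, since all sums are finite.
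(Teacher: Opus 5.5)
Your proposal is correct and takes essentially the same route as the result it proves: the paper defers the proof to \cite{GM2025a} and only sketches sufficiency, writing $P_{xy}=\mathbb{E}\big[\prod_{k}P_{x[k]y[k]}(V[k])\big]$ as a $V$-mixture of shifts with eigenvalues $\mathbb{E}[\theta_1^{V\cdot r}]$, which is your identity $P_{xy}=\mathbb{P}(V=y-x)$, and your necessity step is the standard character-orthogonality inversion. Your explicit normalisation $\rho_0=1$ is exactly the hypothesis the statement leaves implicit (without it, non-negativity only gives $\rho_r=\rho_0\,\mathbb{E}[\theta_1^{V\cdot r}]$), and your sign hedge is unnecessary, since the inversion directly gives $V$ distributed as $y-x$.
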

\begin{Remark}
It is possible to obtain spectral expansions (\ref{spectral:77}) where $\rho_r=0$ if $r$ has more than $c$ entries which are non-zero by the following construction. Let 
$V$ be such that $c$ entries are chosen uniformly with probability ${d\choose c}^{-1}$ to be from an exchangeable distribution $\mathbb{P}(V[i_1]=v_1,\ldots, V[i_c]= v_c) = p(v_1,\ldots,v_c)$ and the other $d-c$ entries are \iid uniform on $1/q^{d-c}$.
Then
\begin{equation*}
\rho_r={d\choose c}^{-1}\sum_{i_1,\ldots,i_c}\theta_1^{r[i_1]v_1+\cdots + r[i_c]v_c}p(v_1,\ldots, v_c)
\delta_{r[i_c+1],0}\cdots \delta_{r[i_d],0},
\label{truncated:0}
\end{equation*}
which is zero unless less than or equal to $d-c$ entries of $r$ are zero.
\begin{Example}
If $c=1$ then
\begin{equation*}
P_{xy} = \frac{1}{q^d}\sum_{k=1}^d\frac{1}{d}
\sum_{r=(0,\ldots,r[k],\ldots ,0)}
\rho_r\theta_1^{(x[k]-y[k])r[k]}.
\label{truncated:01}
\end{equation*}
If $c=2$ then
\begin{equation*}
P_{xy} = \frac{1}{q^d}\sum_{j,k=1, j\ne k}^d\frac{2}{d(d-1)}
\sum_{r=(0,\ldots,r[j],0,\ldots r[k],\ldots ,0)}
\rho_r\theta_1^{(x[j]-y[j])r[j]+(x[k]-y[k])r[k]}.
\label{truncated:02}
\end{equation*}
\end{Example}
\end{Remark}
An extension in \cite{GM2025a} is to a process  $(\B_t)_{t\in \mathbb{N}}$
 which has a state space $[0,1]^\N$ when $\V$ has a wrapped distribution on a $\mathbb{T}^\N$  torus. Then there is a transition density, which is convergent, expressed when $\N=1$ as
\begin{align*}
f\big (\b\mid \a\big )
&=  \sum_{r={-\infty}}^{\infty}
\mathbb{E}\big [e^{2\pi i\ \V r}\big ]
e^{2\pi i\ (\a -\b)\dt r}\ \a,\b \in \mathbb{T}.
\label{cts:20}
\end{align*}
\begin{Proposition}{(\cite{GM2025a}.)} \label{torusprop} Let $(\B_t)_{t\in \mathbb{N}}$ be a homogeneous in time stochastic process on a $[0,1]^\N$ torus with a uniform stationary distribution and eigenvectors $(e^{2\pi i  B_t\dt r})_{r\in \mathbb{N}^d}$. Then the class of such processes is characterized by having eigenvalues
\begin{equation*}
\rho_r = \mathbb{E}\big [e^{2\pi i\V\dt r}].
\label{propcts:00}
\end{equation*}
where $\V$ is a wrapped random variable on a $[0,1]^\N$ torus $\mathbb{T}^\N$.
\end{Proposition}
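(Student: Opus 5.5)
The plan is to transfer the argument behind Proposition \ref{pcirc} from the finite torus ${\cal V}_{\d,\N}$ to the continuous torus $\mathbb{T}^\N$, replacing the finite Fourier transform by Fourier series on $[0,1]^\N$. The class of processes is described by a transition kernel $f(\b\mid\a)$ with uniform stationary distribution and eigenfunctions $e^{2\pi i \b\dt r}$. We therefore need to show two things: for each such kernel, $r\mapsto\rho_r$ is the characteristic function of some wrapped random variable $\V$; and conversely, every $\rho_r = \mathbb{E}[e^{2\pi i \V\dt r}]$ produces such a kernel. (Since $e^{2\pi i\b\dt r}$ for $r\in\mathbb{Z}^\N$ is the natural basis, I read $r\in\mathbb{N}^d$ as allowing both signs, the negative indices coming from complex conjugates.)

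\emph{Sufficiency.} Let $\V$ be a random variable on $\mathbb{T}^\N$ and define the process by $\B_{t+1} = \B_t + \V_t \mod 1$ with $(\V_t)$ i.i.d.\ copies of $\V$, the continuous analogue of (\ref{rweqn:000}). Then
\[
\mathbb{E}\big[e^{2\pi i \B_{t+1}\dt r}\mid \B_t=\a\big] = e^{2\pi i \a\dt r}\,\mathbb{E}\big[e^{2\pi i\V\dt r}\big],
\]
because $e^{2\pi i x\dt r}$ is $1$-periodic in each coordinate. So each exponential is an eigenfunction with eigenvalue $\rho_r$. Lebesgue measure on $[0,1]^\N$ is invariant under translation mod $1$, so the uniform distribution is stationary. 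The kernel is the law of $\a+\V$ mod $1$. When this law has a density, its Fourier series gives the expansion $f(\b\mid\a)$ displayed above, so $f$ is non-negative, being a probability density.

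\emph{Necessity.} Given a time-homogeneous process with these eigenfunctions and uniform stationary law, let $K(\a,d\b)$ be its one-step kernel. The eigenfunction property gives
\[
\int e^{2\pi i\b\dt r}K(\a,d\b) = \rho_r e^{2\pi i\a\dt r} \quad\text{for all } r.
\]
Now push $K(\a,\cdot)$ forward under $\b\mapsto \b-\a \mod 1$, giving a probability measure $\mu_\a$ on $\mathbb{T}^\N$. Its Fourier coefficients are $\int e^{2\pi i u\dt r}\mu_\a(du)=\rho_r$, which do not depend on $\a$. By uniqueness of Fourier–Stieltjes coefficients for measures on the torus, $\mu_\a=\mu$ is the same for every $\a$. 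Taking $\V\sim\mu$ gives $\rho_r=\mathbb{E}[e^{2\pi i\V\dt r}]$. Moreover the process is exactly the random walk $\B_{t+1}=\B_t+\V_t$, which recovers the density representation.

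The main obstacle is rigour around the expansion itself. The series for $f(\b\mid\a)$ converges only in a distributional or $L^2$ sense unless $\sum_r|\rho_r|<\infty$, and the statement calls the density "convergent". I would therefore phrase the characterization at the level of kernels and measures, using uniqueness of Fourier coefficients of measures, which is Herglotz/Bochner on $\mathbb{Z}^\N$. The density formula then needs extra conditions, such as square-summable $\rho_r$, giving an $L^2$ density by Parseval. A secondary point is that "eigenvectors" must be read as holding for every starting point $\a$, not merely almost everywhere. If it holds only almost everywhere, $\mu_\a$ is constant for almost every $\a$, and the kernel is modified on a null set.
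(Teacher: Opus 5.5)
The paper does not prove this proposition. It is quoted from \cite{GM2025a}, and the only trace of the argument is the expansion $f(b\mid a)=\sum_r\rho_r e^{2\pi i(a-b)\cdot r}$. This is the continuous analogue of Proposition~\ref{pcirc}: the kernel depends only on $b-a$, and it is non-negative exactly when $(\rho_r)$ are the Fourier coefficients of a probability law on $\mathbb{T}^d$.

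Your proof is correct and rests on the same idea, that the one-step kernel is the law of $a+\V \bmod 1$. The difference is that you carry it out for kernels and Fourier--Stieltjes coefficients of measures, not for densities. That is the better formulation, because it covers laws of $\V$ with atoms. For example, $\V\equiv 0$ gives $\rho_r\equiv 1$, and the series for $f$ diverges. So, as you note, the paper's description of the density expansion as ``convergent'' needs an extra hypothesis. The density form, when it exists, supplies the explicit spectral expansion the paper uses afterwards in Proposition~\ref{PropB}.

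\textbf{First correction: the index set.} Conjugating the eigen-relation for $r\in\mathbb{N}^d$ gives it for $-r$, so it covers $-\mathbb{N}^d$. It does not cover mixed-sign indices such as $(1,-1)$ when $d\ge 2$. Reading the index set as $\mathbb{Z}^d$ is therefore an interpretation, not a consequence of conjugation.

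This matters only for your extra conclusion that $\mu_a$ does not depend on $a$, i.e.\ that the process is a random walk. With indices in $\mathbb{N}^2$ only, consider the kernel density $1+\epsilon\cos\big(2\pi(b_1-b_2)+2\pi(a_1+a_2)\big)$ with $0<\epsilon\le 1$. It has uniform stationary law, and every $e^{2\pi i b\cdot r}$ with $r\in\mathbb{N}^2$ is an eigenfunction, with $\rho_r=\delta_{r0}$. Yet it is not a translation kernel.

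The eigenvalue characterization itself needs neither the full index set nor uniqueness. Putting $a=0$ in the eigen-relation gives $\rho_r=\int e^{2\pi i b\cdot r}K(0,db)$, so taking $\V\sim K(0,\cdot)$ already works.

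\textbf{Second correction: the attribution.} Uniqueness of Fourier--Stieltjes coefficients comes from the density of trigonometric polynomials in $C(\mathbb{T}^d)$. Herglotz/Bochner is the converse existence theorem. You would need it only to restate the class intrinsically, as positive-definite sequences $(\rho_r)_{r\in\mathbb{Z}^d}$ with $\rho_0=1$.
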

The Green function when $(Z_t)$ are \iid distributed as $V\in {\cal V}_{\d,d}$ with exchangeable entries can be calculated from knowing the transition density of $(X_t)$ in Proposition \ref{pcirc}. We restrict attention to real eigenvalues.
\begin{Proposition}\label{qexpansion}{(\cite{GM2025a})}
Suppose the entries of $V$ are exchangeable and the eigenvalues $(\kappa_l)$ are real.
Write $M_t = (M_{tj})_{j=0}^{\d-1}$, where $M_{tj}$ counts the number of entries equal to $j$ in $X_t$.
The $t$-step transition function for $M_t=m$ to $M_{t+1}=n$ has a spectral expansion
\begin{equation}
\mathbb{P}_t\big (n\mid m)=
\p(n;d)
\Bigg \{ 1 + \sum_{l:0<|l| \leq \N}\kappa_l^th_lQ_l(m;(\theta_k))\overline{Q_l(n;(\theta_k))}\Bigg \},
\label{spectral:001}
\end{equation}
where $\big (Q_l(\cdot ;(\theta_k))\big )$ are multivariate Krawtchouk orthogonal polynomials on the uniform multinomial distribution
\[
\p(n;\N) = {\N\choose n}\frac{1}{\d^\N}.
\]
$Q_l(m ;(\theta_k))$ is the coefficient of $w_1^{l[1]}\cdots w_{\d-1}^{l[\d -1]}$ in the generating function
\begin{equation}
G(m;w,\d,\N) =
\prod_{j=0}^{\d-1}\Big (1 + \sum_{k=1}^{\d-1}w_k\theta_k^j\Big )^{m[j]}.
\label{genfn:00}
\end{equation}
where $\theta_k= e^{2\pi i/\d\dt k}$.
The scale constants for the orthogonal functions
\begin{equation*}
h_l^{-1} = \mathbb{E}\big [Q_l(M;(\theta_k))\overline{Q_l(M;(\theta_k))}\big ]
= \frac{\N!}{(\N-|l|)!l[1]!\cdots l[\d-1]!},
\label{h:00}
\end{equation*}
where expectation is taken over $M$ having a uniform multinomial distribution
and $|l| = l[1]+\cdots +l[\d-1]$.  
The eigenvalues 
$\kappa_l,\ j \in {\cal V}_\d$ are a grouping of indices in $\rho_r$.
Let  $A_l = \{k: |\{k:r[k]=j\}| = l[j], j \in {\cal V}_\d\}$. Then 
\begin{align*}
\kappa_l =   \mathbb{E}\big [\prod_{A_l} \theta_{r[k]}^{V[k]}\big ]
=\mathbb{E}\big [
\theta_1^{ S_{l[1]} }
\theta_2^{S_{l[2]} }
\cdots
\theta_{\d-1}^{ S_{l[\d-1]} }\Big ]
\end{align*}
where
\begin{align*}
S_{l[1]} &= V[1]+\cdots + V[l[1]]\\
S_{l[2]} &= V[l[1]+1]+\cdots + V[l[1]+1+l[2]]\\
&\cdots\cdots\\
S_{l[\d-1]}&=V[l[1]+\cdots + l[\d-1] +\d-1]] + \cdots +V[l[1]+\cdots + l[\d-2] +\d-1 + l[\d-1]].
\end{align*}
\end{Proposition}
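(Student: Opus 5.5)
Starting from Proposition \ref{pcirc}, the $t$-step kernel on ${\cal V}_{\d,\N}$ is
\begin{equation*}
P^{(t)}_{xy}=\frac{1}{\d^\N}\sum_{r\in{\cal V}_{\d,\N}}\rho_r^t\,\theta_1^{(x-y)\dt r},
\end{equation*}
since the characters $\theta_1^{x\dt r}$ diagonalize the kernel simultaneously. We then lump this to the count process $M_t$. Because the entries of $V$ are exchangeable, $\rho_r=\mathbb{E}[\theta_1^{V\dt r}]$ is invariant under permutations of the coordinates of $r$. Hence $\rho_r$ depends only on the type $l$ of $r$, where $l[j]=|\{k:r[k]=j\}|$ for $j=1,\ldots,\d-1$, and we define $\kappa_l$ to be this common value. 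Reordering coordinates so that the $l[1]$ entries with $r[k]=1$ come first, then the $l[2]$ entries with $r[k]=2$, and so on, gives $\kappa_l=\mathbb{E}[\prod_j\theta_j^{S_{l[j]}}]$ with the block sums $S_{l[j]}$ as in the statement. Reality of $\kappa_l$ is assumed. The stationary law of $M_t$ is the image of the uniform distribution under the counting map, which is $\p(n;\N)$.

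Next, fix $x$ with counts $m$ and sum $P^{(t)}_{xy}$ over all $y$ with counts $n$. This requires the character sums within a type class. For fixed $x$, summing $\theta_1^{x\dt r}$ over all $r$ of type $l$ gives the coefficient of $\prod_k w_k^{l[k]}$ in $\prod_{i=1}^\N(1+\sum_{k}w_k\theta_k^{x[i]})$. The factor for coordinate $i$ depends only on $x[i]$, so the product equals (\ref{genfn:00}), and the coefficient is $Q_l(m;(\theta_k))$. The generating function, and hence this sum, depends on $x$ only through $m$. Likewise, summing $\theta_1^{-y\dt r}$ over $r$ of type $l$ gives $\overline{Q_l(n)}$.

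Now the double sum. For fixed $r$ of type $l$, summing $\theta_1^{-y\dt r}$ over $y$ of type $n$ gives a symmetric function of the coordinates of $r$, so the result is the same for every $r$ of type $l$. Therefore
\begin{equation*}
\sum_{y:\,n(y)=n}\theta_1^{-y\dt r}=\frac{1}{|\{r'\text{ of type }l\}|}\sum_{y:\,n(y)=n}\ \sum_{r'\text{ of type }l}\theta_1^{-y\dt r'}={\N\choose n}\frac{\overline{Q_l(n)}}{N_l},
\end{equation*}
where $N_l=\N!/((\N-|l|)!\,l[1]!\cdots l[\d-1]!)$ is the number of $r$ of type $l$. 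Summing $\theta_1^{x\dt r}$ over the $r$ of type $l$ then contributes $Q_l(m)$. Putting the pieces together,
\begin{equation*}
\mathbb{P}_t(n\mid m)=\p(n;\N)\sum_l\kappa_l^t\,\frac{Q_l(m)\overline{Q_l(n)}}{N_l}.
\end{equation*}
The $l=0$ term equals $1$, which gives (\ref{spectral:001}) provided $h_l=1/N_l$.

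It remains to verify $h_l^{-1}=\mathbb{E}|Q_l(M)|^2=N_l$, together with orthogonality. Both follow from $Q_l(M)=\sum_{r\text{ of type }l}\theta_1^{X\dt r}$ with $X$ uniform on ${\cal V}_{\d,\N}$. For distinct $r,r'$ the characters are orthogonal in $\mathbb{E}$, so $\mathbb{E}[Q_l\overline{Q_{l'}}]=\delta_{ll'}N_l$. The main obstacle is the symmetrization step above, namely justifying that the inner sum over the type class of $y$ is constant across the type class of $r$. This holds because permuting the coordinates of $r$ is equivalent to permuting the coordinates of $y$, which preserves the type class of $y$.
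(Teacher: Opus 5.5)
Your proof is correct. The paper gives no proof of this proposition; it is quoted from \cite{GM2025a}. So there is no in-paper argument to compare with, and yours is a self-contained derivation from Proposition \ref{pcirc} and the generating function (\ref{genfn:00}).

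The three key facts all hold:
\begin{enumerate}
\item $\rho_r$ is constant on type classes, by exchangeability of $V$.
\item $\sum_{r\text{ of type }l}\theta_1^{x\dt r}=Q_l(m)$, because the $i$th factor of $\prod_{i=1}^{d}\big(1+\sum_k w_k\theta_k^{x[i]}\big)$ records the choice of $r[i]$.
\item $r\mapsto\sum_{y:n(y)=n}\theta_1^{-y\dt r}$ is invariant under coordinate permutations. This lets the double sum factor as ${d\choose n}Q_l(m)\overline{Q_l(n)}/N_l$, and it also gives orthogonality and $h_l^{-1}=N_l$ at once.
\end{enumerate}

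Your route also yields two further facts. First, you never use the reality of $\kappa_l$. Without it, the terms indexed by $l$ and by its reflection $l'[j]=l[q-j]$ are complex conjugates, since $\kappa_{l'}=\overline{\kappa_l}$ and $Q_{l'}=\overline{Q_l}$. The expansion therefore holds and is real in general; reality matters only for the later Gaussian-field constructions. Second, the orbit-sum representation immediately gives
\[
\kappa_l=\frac{1}{N_l}\sum_{r\text{ of type }l}\rho_r=h_l\,\mathbb{E}\big[Q_l(n_V;(\theta_k))\big],
\]
with $n_V$ the counts of $V$. This is the characterization (\ref{characterize:00}) of Corollary \ref{Generaltrans}, obtained without any duality argument.

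One step you should make explicit. The $t=1$ case of your computation shows that $\sum_{y:n(y)=n}P_{xy}$ depends on $x$ only through $m$. Hence $(M_t)$ is itself a Markov chain by lumpability. Only after that is the lumped $t$-step kernel you computed the $t$-step transition function of $M_t$, rather than just $\mathbb{P}(M_t=n\mid X_0=x)$.
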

There is a general way of writing the eigenvalues $\kappa_l$, expressed in the following Corollary.
\begin{Corollary}\label{Generaltrans}{(\cite{GM2025a})}
An expansion (\ref{spectral:001}) with given eigenfunctions 
$\big (Q_l(m;(\theta_k)\big )$ is non-negative if and only if
\begin{equation}
\kappa_l=h_l\mathbb{E}\big [Q_l(V;(\theta_k))\big ].
\label{characterize:00}
\end{equation}
Then (\ref{spectral:001}) can be written as 
\begin{equation*}
\p(n;\N)\mathbb{E}\bigg [
 1 + \sum_{|l| \leq \N}h_l^2Q_l(V;(\theta_k))Q_l(m;(\theta_k))\overline{Q_l(n;(\theta_k))}\bigg ],
\label{spectral:0011}
\end{equation*}
with expectation over $V$.\\
\end{Corollary}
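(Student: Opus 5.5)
The plan is to identify the Krawtchouk coefficient of the increment distribution with the grouped eigenvalue $\kappa_l$ of Proposition \ref{qexpansion}. Sufficiency will then follow from mixing, and necessity from orthogonality. Throughout, $V$ is read as the count vector of the increment $Z_t$: $V[j]$ is the number of entries equal to $j$, so $V$ is a random element of $\{n:|n|=\N\}$.

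\emph{Sufficiency.} Suppose first that $V$ is deterministic, say $V=v$ with counts $v$. Take a fixed vector $z$ with those counts and exchangeably permute its entries to obtain the increment. Then $\rho_r$ is constant on the class $A_l$, since permuting coordinates preserves it. Expanding the generating function (\ref{genfn:00}) with $m=v$ and collecting the coefficient of $w^l$ gives
\[
Q_l(v;(\theta_k))=\sum_{r\in A_l}\theta_1^{z\dt r},
\]
a sum over all placements of $l[k]$ copies of the index $k$ among the $\N$ coordinates. There are $|A_l|=h_l^{-1}$ such placements, and averaging over the exchangeable relabelling gives
\[
\kappa_l=\frac{1}{|A_l|}\sum_{r\in A_l}\rho_r=h_lQ_l(v;(\theta_k)).
\]
Hence for deterministic $V$ the expansion (\ref{spectral:001}) is exactly the genuine transition kernel of the lumped walk $(M_t)$, and so it is non-negative. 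For random $V$, the right-hand side is linear in $\kappa_l$. The expansion with $\kappa_l=h_l\mathbb{E}[Q_l(V)]$ is therefore a mixture over $V$ of non-negative kernels, which gives the displayed form with expectation over $V$ and proves non-negativity.

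\emph{Necessity.} Suppose the expansion with given $\kappa_l$ defines a non-negative kernel $\mathbb{P}(n\mid m)$. Set $m=\N e_0$, all entries zero. From (\ref{genfn:00}), $Q_l(\N e_0)$ is the coefficient of $w^l$ in $(1+\sum_k w_k)^\N$, which equals $h_l^{-1}$. Thus
\[
\mathbb{P}(n\mid \N e_0)=\p(n;\N)\sum_l \kappa_l\overline{Q_l(n)}.
\]
This is a probability distribution on $n$; let $V$ have this law. Multiply by $Q_l(n)$, sum over $n$, and use orthogonality with respect to the multinomial $\p(\cdot;\N)$, namely $\mathbb{E}[Q_l\overline{Q_{l'}}]=\delta_{ll'}h_l^{-1}$. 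This gives $\mathbb{E}[Q_l(V)]=\kappa_l h_l^{-1}$, which is (\ref{characterize:00}). A reality or conjugation symmetry may be needed here: realness of $\kappa_l$ gives $\overline{Q_l}$ versus $Q_l$ consistency via $\overline{Q_l(n;(\theta_k))}=Q_{l^*}(n;(\theta_k))$ with reversed indices.

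\emph{Main obstacle.} The main obstacle is the combinatorial identity $Q_l(v)=\sum_{r\in A_l}\theta_1^{z\dt r}$, together with checking that the normalisation $|A_l|=h_l^{-1}$ matches. I would verify it by writing the product over coordinates, $\prod_i\bigl(1+\sum_k w_k\theta_k^{z[i]}\bigr)$, which regroups by value $j$ into (\ref{genfn:00}). The conjugation bookkeeping in the necessity step is the other place where care is needed.
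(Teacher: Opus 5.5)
Your proof is correct. The paper gives no proof of this Corollary (it is quoted from \cite{GM2025a}). In context it is presented as a consequence of Proposition \ref{pcirc} together with the grouping in Proposition \ref{qexpansion}. Non-negativity holds iff $\rho_r=\mathbb{E}[\theta_1^{Z\cdot r}]$ for a random increment $Z$. With $\rho_r$ constant on each $A_l$, averaging over $A_l$ gives $\kappa_l=h_l\mathbb{E}\bigl[\sum_{r\in A_l}\theta_1^{Z\cdot r}\bigr]=h_l\mathbb{E}[Q_l(V)]$, which is exactly your identity $Q_l(v)=\sum_{r\in A_l}\theta_1^{z\cdot r}$ with $|A_l|=h_l^{-1}$.

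You instead work entirely in the lumped chain. Necessity comes from the single row $m=de_0$ plus Krawtchouk orthogonality, which is the lumped counterpart of the Fourier inversion behind Proposition \ref{pcirc}. It also shows implicitly that non-negativity of the count kernel is equivalent to non-negativity of the full kernel, since $P_{0y}$ then depends on $y$ only through its counts. Sufficiency comes from realizing the $\kappa_l$ by actual walks and mixing. This gives a self-contained proof that never needs the characterization on ${\cal V}_{q,d}$; the paper's route is shorter once Proposition \ref{pcirc} is available.

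There are two small points to tidy up.

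\emph{Necessity.} Your conjugation worry is unnecessary. Multiplying $\mathfrak{p}(n;d)\sum_l\kappa_l\overline{Q_l(n)}$ by $Q_{l'}(n)$ and summing over $n$ is literally $\sum_l\kappa_l\mathbb{E}[Q_{l'}(M)\overline{Q_l(M)}]=\kappa_{l'}h_{l'}^{-1}$. No reality assumption or index reversal is needed. The row also sums to one, because $\mathbb{E}[\overline{Q_l(M)}]=0$ for $l\neq 0$.

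\emph{Sufficiency.} For deterministic $v$ the eigenvalues $h_lQ_l(v)$ are in general not real. Already for $q=3$, $d=1$, $z=(1)$ one gets $e^{2\pi i/3}$. Proposition \ref{qexpansion}, however, is stated for real eigenvalues, so you have two options.
\begin{itemize}
\item Verify the lumped formula directly. For $r\in A_l$, the sum of $\theta_1^{-y\cdot r}$ over the states $y$ with counts $n$ is invariant under permuting $r$, so it equals $h_l\binom{d}{n}\overline{Q_l(n)}$. This yields (\ref{spectral:001}) with no reality assumption.
\item Skip the deterministic step. Arrange a random count vector $V$ in uniformly random order to get a single exchangeable increment $Z$. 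Then $\sum_{r\in A_l}\rho_r=\mathbb{E}[Q_l(V)]$ gives $\kappa_l=h_l\mathbb{E}[Q_l(V)]$ directly, and these eigenvalues are real whenever the given $\kappa_l$ are.
\end{itemize}
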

Multivariate Krawtchouk polynomials were first introduced by \cite{G1971} and studied in \cite{DG2014}.
The particular form with generating function (\ref{genfn:00}) is important in the study of circulant multitype random walks in \cite{M2010,M2011}.
Multivariate Krawtchouk polynomials are complicated, but important, objects which need time to understand.
\subsection{Continuous time processes}\label{ctstime}
A Poisson process embedding of the discrete process to form transition functions in continuous time $\tau \geq 0$ is
for $\lambda >0$,
\begin{equation*}
{\cal P}_\tau(y\mid x;\lambda) = \sum_{k=0}^\infty e^{-\lambda\tau}\frac{(\lambda\tau)^k}{k!}
\mathbb{P}_k(y\mid x ),
\end{equation*}
where $\mathbb{P}_k(y\mid x )=\mathbb{P}\big (X_k=y\mid X_0=x)$.
The eigenvectors of ${\cal P}_\tau(y\mid x;\lambda)$ and $\mathbb{P}_k(y\mid x )$ are the same, however the eigenvalues show the form of a subordinated process. There is a more general form by letting $\lambda \to \infty$ and taking the original eigenvalues tending to 1 to obtain a limit.
General forms in the continuous time processes arising from Propositions \ref{pcirc}, \ref{torusprop}, \ref{qexpansion} are the following.
\begin{itemize}
\item
Proposition \ref{pcirc}.\\
The eigenvalues in ${\cal P}_\tau(y\mid x)$ have the form
\begin{equation*}
\rho_r(\tau) 
= \exp \Big \{\tau \int_{[0,1]^d}
\frac{
\theta_1^{\xi\dt r}-1 
}
{|\xi|}\beta(d\xi)
\Big \},
\end{equation*}
obtained by letting $\lambda\to \infty$ and the measure of $V$ depend on $\lambda$ such that $\lambda$ times the measure tends to $\beta(d\xi)/|\xi|$, $|\xi| > 0$. The rate matrix has elements for $x\ne y$ of
\begin{equation*}
Q_{xy} = \frac{1}{\d^\N}\sum_{r\in {\cal V}_{\d,\N}, |r|\ne 0}\exp \Big \{\tau \int_{[0,1]^d}
\frac{
\theta_1^{\xi\dt r}-1 
}
{|\xi|}\beta(d\xi)\Big \}
\theta_1^{(x-y)\dt r}.
\label{spectral:77a}
\end{equation*}
\item
Proposition \ref{torusprop}.\\
The eigenvalues have a form of 
\begin{equation*}
\rho_r(\tau) 
= \exp \Big \{\tau \int_{[0,1]^d}
\frac{
e^{2\pi i \xi\dt r}-1 
}
{|\xi|}\beta(d\xi)
\Big \}.
\label{propcts:00a}
\end{equation*}
\item
Proposition \ref{qexpansion}.\\
From (\ref{characterize:00})
the eigenvalues have the form
\begin{equation*}
\exp \Big \{\tau\int_{{\cal P}_{\N,\d}}\frac{h_lQ_l(\zeta; (\theta_k)) - 1}{\N-\zeta[0]}\gamma(d\zeta)\Big \},
\label{propcts:00ab}
\end{equation*}
where $\zeta$ are the counts in ${\cal P}_{\N,\d}$, a random partition of $\N$ into $\d$ parts, and $\gamma$ is a measure on these partitions.
The rate matrix, indexed by partitions has elements, for $n\ne m$, of
\begin{equation*}
Q_{nm}=
\p(n;\N)\bigg (
 \sum_{l:|l| \leq \N} \int_{\zeta}\frac{h_lQ_l(\zeta; (\theta_k)) - 1}{\N-\zeta[0]}\gamma(d\zeta)\cdot  h_lQ_l(m;(\theta_k))\overline{Q_l(n;(\theta_k))}\bigg ).
\label{spectral:0011a}
\end{equation*}
The reason why $d-\zeta[0]$ appears in the denominator is to allow for $\gamma$ to have an atom (say $c$) when $\zeta[0]=d$. Then the ratio in (\ref{propcts:00ab}) at zero is interpreted as unity and the integral contributes $c$.
\end{itemize}
\section{The Green function and killing}
A weighted Green function when there is killing at each transition with probability
 $1-\alpha$, $\alpha \in [0,1]$,
for $x,y\in {\cal V}_{\d,\N}$, is
\begin{equation}
G(x,y;\alpha) = \sum_{t=0}^\infty \alpha^t\mathbb{P}\big (X_{t+1}=y\mid X_0=x\big ).
\label{Green:00}
\end{equation}
\begin{Proposition}\label{grouped:000}
Let $T_\alpha$ be a killing time for $X_t$ such that 
\[
\mathbb{P}\big (T_\alpha = t\big ) = (1-\alpha)\alpha^t,\ t\in \mathbb{N}.
\]
Then 
\begin{equation*}
\mathbb{E}\mathbb{P}\big (X_{T_\alpha} = y\mid X_0=x \big ) = (1-\alpha)G(x,y;\alpha).
\label{Greenkill:00}
\end{equation*}
\end{Proposition}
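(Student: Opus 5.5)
The plan is to condition on the value of the killing time and then sum a geometric series. I assume that $T_\alpha$ is independent of the walk $(X_t)$; this is implicit in the phrase ``killing at each transition with probability $1-\alpha$''. Under that assumption the law of total probability gives
\begin{equation*}
\mathbb{P}\big (X_{T_\alpha}=y\mid X_0=x\big ) = \sum_{t}\mathbb{P}\big (T_\alpha=t\big )\,\mathbb{P}\big (X_{T_\alpha}=y\mid X_0=x, T_\alpha=t\big ).
\end{equation*}
Independence lets us replace the last factor by the $t$-step transition probability of the walk. Substituting $\mathbb{P}(T_\alpha=t)=(1-\alpha)\alpha^t$ and pulling out the constant $1-\alpha$ leaves $(1-\alpha)\sum_t \alpha^t$ times the transition probabilities. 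This is $(1-\alpha)$ times the series defining $G(x,y;\alpha)$ in (\ref{Green:00}).

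Interchanging the sum and the expectation needs only that every term is non-negative (Tonelli). Moreover, $\sum_t \alpha^t\,\mathbb{P}(\cdot)\le \sum_t\alpha^t=(1-\alpha)^{-1}$, so $G$ is finite for $\alpha<1$. The outer $\mathbb{E}$ in the statement is then just the average over $T_\alpha$ that the conditioning has already carried out. As a sanity check I would sum the result over $y$: this gives $(1-\alpha)\sum_t\alpha^t=1$, confirming that $X_{T_\alpha}$ has a genuine probability distribution.

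The only real obstacle is matching the time indices. The Green function (\ref{Green:00}) is written with $X_{t+1}$, so the $t=0$ term is already a one-step transition. I would therefore make the convention explicit: killing is decided at the end of each transition, so that when $T_\alpha=t$ the walk has made $t+1$ moves before being killed, and $X_{T_\alpha}$ denotes the position at the moment of killing, that is, $X_{t+1}$. This is consistent with the geometric law $(1-\alpha)\alpha^t$ on $t=0,1,2,\ldots$: the walk survives $t$ transitions and is killed on the next one. With this convention each term of the conditioned sum is exactly $(1-\alpha)\alpha^t\,\mathbb{P}(X_{t+1}=y\mid X_0=x)$, and summing gives the claim.

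If one instead reads $X_{T_\alpha}$ literally as $X_t$, the same computation yields $(1-\alpha)\sum_t\alpha^t\mathbb{P}_t(y\mid x)$, which differs from (\ref{Green:00}) by the extra one-step transition. In that case I would record the correspondence by noting that the two expressions are related by one application of the transition matrix $P$.
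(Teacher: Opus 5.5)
Your argument (condition on $T_\alpha$, use its independence from the walk, insert the geometric weights, interchange by Tonelli) is exactly the ``clear probabilistic connection'' that the paper's one-line proof appeals to, so the route is the same. What needs changing is how you resolve the index mismatch you correctly spotted in (\ref{Green:00}). Your main convention is that on $\{T_\alpha=t\}$ the symbol $X_{T_\alpha}$ means $X_{t+1}$. This makes the identity hold with (\ref{Green:00}) as printed, but it is not the paper's meaning, and it breaks every later use of the proposition. The proof of Proposition \ref{proposition:expansions} writes $\mathbb{P}(X_{T_\alpha}=y\mid X_0=x)=\sum_{t\ge 0}(1-\alpha)\alpha^t q^{-d}\sum_r\rho_r^t\theta_1^{(x-y)\cdot r}$, i.e.\ with $t$-step, not $(t+1)$-step, transitions. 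Example \ref{Ex:1} contains the term $(1-\alpha)\delta_{xy}$, which is the mass of $\{T_\alpha=0\}$ with the walk still at $x$. Section \ref{Hamiltonian:00} uses $G^{(-1)}=I-\alpha P$. Under your convention one instead gets $(1-\alpha)G=(1-\alpha)P(I-\alpha P)^{-1}$. Its eigenvalues are $(1-\alpha)\rho_r/(1-\alpha\rho_r)$ rather than those in (\ref{killed:00}), and for uniform $V$ it gives $q^{-d}$ in place of $(1-\alpha)\delta_{xy}+\alpha q^{-d}$.

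The correct resolution is the one in your last paragraph. Read $X_{T_\alpha}$ in the standard way, $X_{T_\alpha}=X_t$ on $\{T_\alpha=t\}$, and treat $X_{t+1}$ in (\ref{Green:00}) as a misprint for $X_t$. Then $G(x,y;\alpha)=\sum_{t\ge 0}\alpha^t\mathbb{P}(X_t=y\mid X_0=x)=\big((I-\alpha P)^{-1}\big)_{xy}$, and your computation proves the proposition directly with no leftover factor of $P$. As currently phrased, that paragraph does not ``record a correspondence.'' It shows that the literal statement fails under the standard reading, which signals that the definition of $G$, not the meaning of $X_{T_\alpha}$, needs fixing.
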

\begin{proof}
This is a clear probabilistic connection from (\ref{Green:00}).
\end{proof}
The Green function for $(X_t)$ is now calculated when there is killing.
\begin{Proposition}\label{proposition:expansions}
For $x,y \in {\cal V}_{\d,\N}$,
\begin{align}
(1-\alpha)G(x,y;\alpha) &= 
\frac{1}{\d^\N}\sum_{r\in {\cal V}_{\d,\N}}
\frac{1}{1 + \frac{\alpha}{1-\alpha}(1-\rho_r)}
\theta_1^{(x-y)\dt r}
\label{killed:00}\\
&=\frac{1}{\d^\N}
\Bigg \{ 1 + \sum_{l:0 < |l| \leq \N}\frac{h_l}{1 + \frac{\alpha}{1-\alpha}(1-\kappa_l)}Q_l(m;(\theta_k))\overline{Q_l(n;(\theta_k))}\Bigg \}.
\label{killed:00a}\\
\nonumber
\end{align}
In general (\ref{killed:00}) holds, and (\ref{killed:00a}) holds with exchangeability.
In (\ref{killed:00a}) $m,n$ are grouped values in $x,y$.
\end{Proposition}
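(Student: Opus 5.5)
The plan is to diagonalise the transition function using Proposition \ref{pcirc} and then sum the resulting geometric series in $t$ eigenvalue by eigenvalue. First, since the eigenvectors $\theta_1^{x\dt r}$ are common to all powers of $P$, the $t$-step transition function is
\[
\mathbb{P}\big(X_t=y\mid X_0=x\big)=\frac{1}{\d^\N}\sum_{r\in{\cal V}_{\d,\N}}\rho_r^t\,\theta_1^{(x-y)\dt r}.
\]
This follows by iterating (\ref{spectral:77}) and using the orthogonality $\d^{-\N}\sum_{z}\theta_1^{(x-z)\dt r}\theta_1^{(z-y)\dt s}=\delta_{rs}\theta_1^{(x-y)\dt r}$. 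Equivalently, $\rho_r^t=\mathbb{E}[\theta_1^{(Z_1+\cdots+Z_t)\dt r}]$ is the characteristic function of the $t$-step increment.

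Next I would fix the time indexing. To be consistent with Proposition \ref{grouped:000}, where $T_\alpha$ has law $(1-\alpha)\alpha^t$ on $t\in\mathbb{N}$ (including $t=0$), I take $(1-\alpha)G(x,y;\alpha)=\sum_{t\ge 0}(1-\alpha)\alpha^t\,\mathbb{P}(X_t=y\mid X_0=x)$. This means reading (\ref{Green:00}) with $t$ steps rather than $t+1$. With the literal $t+1$ indexing, each eigenvalue below would acquire an extra factor $\rho_r$, giving $(1-\alpha)\rho_r/(1-\alpha\rho_r)$, so this convention has to be stated explicitly.

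Since $|\rho_r|\le 1$ and $\alpha<1$, the series converges absolutely, so the sums over $t$ and $r$ can be exchanged. This gives
\[
\sum_{t\ge0}(1-\alpha)\alpha^t\rho_r^t=\frac{1-\alpha}{1-\alpha\rho_r}=\frac{1}{1+\frac{\alpha}{1-\alpha}(1-\rho_r)},
\]
which is (\ref{killed:00}). The $r=0$ term has $\rho_0=1$, so it contributes $1$ inside the braces.

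For (\ref{killed:00a}) under exchangeability, I would start from Proposition \ref{qexpansion}. This proposition gives the $t$-step kernel on counts as $\p(n;\N)\{1+\sum_l\kappa_l^t h_l Q_l(m)\overline{Q_l(n)}\}$. Passing from a specific $y$ to its count class $n$ costs the multinomial factor ${\N\choose n}$. Dividing by this factor returns the point kernel, which carries $\d^{-\N}$ and depends on $x,y$ only through $m,n$. Equivalently, group the $r$ in (\ref{killed:00}) by their type counts $l$: $\rho_r=\kappa_l$ is constant on each class, and summing $\theta_1^{(x-y)\dt r}$ over the class gives $h_lQ_l(m)\overline{Q_l(n)}$ by the generating function (\ref{genfn:00}).

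Having reduced to the point kernel, the same geometric summation, now applied to $\kappa_l^t$, yields the coefficient $1/(1+\frac{\alpha}{1-\alpha}(1-\kappa_l))$. The main obstacle is this grouping identity, that is, checking that the class sum of characters equals $h_lQ_l(m;(\theta_k))\overline{Q_l(n;(\theta_k))}$ with the stated normalisation. I would take it directly from Proposition \ref{qexpansion} rather than rederive it.
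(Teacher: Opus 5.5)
Your proof of (\ref{killed:00}) is the paper's own argument: spectral expansion of the $t$-step kernel, then the geometric sum in $t$. Your indexing remark is also correct. The paper sums $(1-\alpha)\alpha^t\rho_r^t$ from $t=0$, so it uses $\mathbb{P}(X_t=y\mid X_0=x)$ as in Proposition \ref{grouped:000} and Example \ref{Ex:1}, not the $X_{t+1}$ written in (\ref{Green:00}).

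For (\ref{killed:00a}) there is a genuine error. Write $\mathcal{R}_l$ for the set of $r$ with type counts $l$, and $\mathcal{Y}_n$ for the set of $y$ of type $n$. You claim two things:
\begin{enumerate}
\item[(i)] dividing the count kernel of Proposition \ref{qexpansion} by ${d\choose n}$ gives the point kernel, which ``depends on $x,y$ only through $m,n$'';
\item[(ii)] $\sum_{r\in\mathcal{R}_l}\theta_1^{(x-y)\cdot r}=h_lQ_l(m;(\theta_k))\overline{Q_l(n;(\theta_k))}$.
\end{enumerate}
Both are false. Under exchangeability, $\mathbb{P}(X_t=y\mid X_0=x)=\mathbb{P}(Z_1+\cdots+Z_t=y-x)$ depends on the type of $y-x$, not on $(m,n)$.

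Two counterexamples with $q=d=2$ and $x=(0,1)$:
\begin{enumerate}
\item[(a)] Take $V\equiv 0$, so $(1-\alpha)G(x,y;\alpha)=\delta_{xy}$. The points $y=(0,1)$ and $y=(1,0)$ share $n=(1,1)$, and the right side of (\ref{killed:00a}) equals $\frac14(1+0+1)=\frac12$ for both.
\item[(b)] Take $l=1$ and $y=x$. The character sum is $2$, whereas $h_1Q_1(m)Q_1(n)=(m[0]-m[1])(n[0]-n[1])/2=0$.
\end{enumerate}

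The correct identities are these:
\begin{enumerate}
\item[(1)] $\sum_{r\in\mathcal{R}_l}\theta_1^{x\cdot r}=Q_l(m;(\theta_k))$ for every $x$ of type $m$. To see this, expand (\ref{genfn:00}) as $\prod_{i=1}^d\sum_{s=0}^{q-1}w_s\theta_1^{s\,x[i]}$ with $w_0=1$.
\item[(2)] $\sum_{y\in\mathcal{Y}_n}\theta_1^{y\cdot r}={d\choose n}h_lQ_l(n;(\theta_k))$ for every $r\in\mathcal{R}_l$. This follows from permutation invariance and $|\mathcal{R}_l|=h_l^{-1}$.
\end{enumerate}
Consequently the right side of (\ref{killed:00a}) equals ${d\choose n}^{-1}\sum_{y\in\mathcal{Y}_n}(1-\alpha)G(x,y;\alpha)$. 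This is the Green function of the lumped chain $(M_t)$, divided by ${d\choose n}$.

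That averaged identity is the only sense in which the grouped formula is true, and the paper's one-line ``the grouped case holds similarly'' should be read that way. Your first route, the geometric sum applied to (\ref{spectral:001}), proves exactly this averaged statement. You should state it in that form and drop both claim (i) and claim (ii). Proposition \ref{qexpansion}, from which you planned to take (ii), concerns only the count chain and does not contain it.
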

\begin{proof}
For (\ref{killed:00})
\begin{align*}
(1-\alpha)G(x,y;\alpha) &= 
\mathbb{P}\big (X_{T_\alpha}=y\mid X_0=x\big )\nonumber\\
&= \sum_{t=0}^\infty (1-\alpha)\alpha^t
\frac{1}{\d^\N}
\sum_{r\in {\cal V}_{\d,\N}}\rho_r^t
\theta_1^{(x-y)\dt r}
\nonumber \\
&=\frac{1}{\d^\N}\sum_{r\in {\cal V}_{\d,\N}}\frac{1}{
1 + \frac{\alpha}{1-\alpha}(1-\rho_r)}\cdot
\theta_1^{(x-y)\dt r}
\label{killed:00c}
\end{align*}
The grouped case holds similarly.
\end{proof}
\begin{Remark}\label{PoissonRem}
Take $\tau$ in the embedded process to be a killing time with rate  $\varkappa > 0$.
A calculation shows that with $\alpha=(1+\varkappa)^{-1}$,
\begin{equation*}
\varkappa u^\varkappa(x,y) := \varkappa\int_0^\infty e^{-\varkappa\tau}{\cal P}_\tau(y\mid x)d\tau
= (1-\alpha) G(x,y;\alpha),
\label{Greencts:000}
\end{equation*}
so 
\begin{equation*}
u^\varkappa(x,y)=
\frac{1}{\varkappa}\cdot\frac{1}{\d^\N}\sum_{r\in {\cal V}_{\d,\N}}\frac{1}{
\big ( 1 + \frac{1}{\varkappa}(1-\rho_r)\big )}
\theta_1^{(x-y)\dt r}.
\label{killed:00cc}
\end{equation*}
If $Q$ is the rate matrix in the continuous time process then 
\begin{equation*}
(-Q)_{xy} = u^\varkappa(x,y).
\label{ratematrix:00}
\end{equation*}
\end{Remark}
\begin{Proposition}\label{PropB}
Let $(\B_t)$ be a random walk on a $\N$-dimensional torus $\mathbb{T}^\N$ described in Proposition 
\ref{torusprop}.
The killed random walk has a Green function with eigenvectors $e^{2\pi i  \b\dt \r}$ and corresponding eigenvalues of $(1-\alpha)G(\a,\b;\alpha)$ for $\r\in \mathbb{N}^\N$ of
\begin{equation*}
\frac{1}{
1 + \frac{\alpha}{1-\alpha}(1-\rho_\r)},
\end{equation*}
where 
$
\rho_\r = \mathbb{E}\big [e^{2\pi i \V\dt \r}\big ]
$
for $\V$ a wrapped random variable on a $[0,1]^\N$ torus $\mathbb{T}^\N$.
A spectral expansion, which is convergent, is
\begin{equation*}
(1-\alpha)G(\a,\b;\alpha) = 
\sum_{\r \in \mathbb{N}^\N}\frac{1}{1 + \frac{\alpha}{1-\alpha}(1-\rho_\r)}e^{2\pi i (\a-\b)\cdot \r}.
\label{GTspectrum:00}
\end{equation*}
\end{Proposition}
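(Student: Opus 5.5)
The plan mirrors the proof of Proposition \ref{proposition:expansions}, with the finite Fourier basis on ${\cal V}_{\d,\N}$ replaced by the Fourier basis $(e^{2\pi i \b\dt\r})$ on $\mathbb{T}^\N$. The first step uses Proposition \ref{torusprop}. It gives the $t$-step transition density of $(\B_t)$ as
\[
f_t(\b\mid\a)=\sum_{\r}\rho_\r^t e^{2\pi i(\a-\b)\dt\r},\qquad \rho_\r=\mathbb{E}\big[e^{2\pi i\V\dt\r}\big].
\]
This holds because the transition operator acts on each exponential $e^{2\pi i\b\dt\r}$ by multiplication by $\rho_\r$, and $t$-fold convolution of the law of $\V$ (modulo the torus) multiplies characteristic functions. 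The indexing should be read as $\r\in\mathbb{Z}^\N$, which includes the conjugate exponentials. Here $G(\a,\b;\alpha)$ is interpreted as a density in $\b$, the continuous analogue of (\ref{Green:00}).

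The second step uses Proposition \ref{grouped:000}. The density of $\B_{T_\alpha}$ given $\B_0=\a$ is $(1-\alpha)G(\a,\b;\alpha)=\sum_{t\ge 0}(1-\alpha)\alpha^t f_t(\b\mid\a)$. Interchanging the sum over $t$ with the Fourier sum and summing the geometric series $\sum_t(1-\alpha)\alpha^t\rho_\r^t=(1-\alpha)/(1-\alpha\rho_\r)$ gives the stated eigenvalue $\big(1+\frac{\alpha}{1-\alpha}(1-\rho_\r)\big)^{-1}$. The series converges because $|\alpha\rho_\r|\le\alpha<1$.

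The interchange is the main obstacle. The density expansion for a single $t$ need only converge in $L^2$ or in a distributional sense. For instance, $\rho_\r$ need not be summable when $\V$ has atoms, and for $t=0$ the kernel is a delta function. I would first verify the identity at the level of Fourier coefficients. Write $\mu_\a$ for the law of $\B_{T_\alpha}$ started at $\a$. Then
\[
\int e^{-2\pi i \b\dt\r}\,\mu_\a(d\b)=\sum_t(1-\alpha)\alpha^t\rho_\r^t e^{-2\pi i\a\dt\r}
\]
by dominated convergence, since every term has modulus at most $(1-\alpha)\alpha^t$. Because Fourier coefficients determine a measure on $\mathbb{T}^\N$, this identifies the eigenvalues rigorously. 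The uniform stationary law supplies the constant term $\r=0$, where $\rho_0=1$ and the eigenvalue equals $1$.

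For the convergent spectral expansion, I would note that the eigenvalues are bounded in modulus by $(1-\alpha)/(1-\alpha)=1$ and in general do not decay. Convergence should therefore be understood in the same sense as the density expansion in Proposition \ref{torusprop}, namely in $L^2(\mathbb{T}^\N)$, or pointwise when $\sum_\r|\rho_\r|<\infty$ or when $\V$ has a density. To handle the $t=0$ atom I would split off the term $(1-\alpha)\delta_\a$, which accounts for the constant part $1-\alpha$ of each eigenvalue. The remainder has coefficients $\alpha(1-\alpha)\rho_\r/(1-\alpha\rho_\r)$, which are dominated by a constant multiple of $|\rho_\r|$. Convergence of the expansion for $f$ then transfers to this remainder.
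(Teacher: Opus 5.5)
Your proposal is correct and follows the paper's own argument. The paper likewise integrates $(1-\alpha)G(\a,\b;\alpha)$ against $e^{2\pi i\b\dt\r}$, uses $\mathbb{E}\big[e^{2\pi i\B_t\dt\r}\mid\B_0=\a\big]=\rho_\r^t e^{2\pi i\a\dt\r}$, and sums the geometric series. Your remarks on the interchange, on indexing by $\r\in\mathbb{Z}^\N$, and on the $t=0$ atom $(1-\alpha)\delta_\a$ are refinements the paper omits. The atom shows that the stated series converges only in a distributional sense, since its coefficients have modulus at least $(1-\alpha)/(1+\alpha)$.

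One slip needs fixing. Pairing with $e^{-2\pi i\b\dt\r}$ produces $\overline{\rho_\r}^{\,t}=\rho_{-\r}^t$, not $\rho_\r^t$. Either pair with $e^{+2\pi i\b\dt\r}$, as the paper does, or assume the $\rho_\r$ are real.
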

\begin{proof}
\begin{align*}
\int_{[0,1]^\N}(1-\alpha)G(\a,\b;\alpha)e^{2\pi i \b\dt\r}d\b
&= (1-\alpha)\sum_{t=0}^\infty \alpha^t 
\int_{[0,1]^\N}e^{2\pi i \b\dt\r}\mathbb{P}\big (\B_t=\b\mid \B_0=\a\big )d\b
\nonumber\\
&= (1-\alpha)\sum_{t=0}^\infty \alpha^t \mathbb{E}\big [e^{2\pi i \V\dt \r}\big ]^t
e^{2\pi i \a \dt\r}
\nonumber\\
&= \frac{1}{
1 + \frac{\alpha}{1-\alpha}(1-\rho_\r)}\cdot e^{2\pi i \a \dt\r}.
\label{eigen:76}
\end{align*}
\end{proof}
\begin{Example}\label{Ex:1}
If $V$ is uniform on ${\cal V}_{\d,\N}$ in Proposition 
\ref{proposition:expansions} then $\rho_r= \delta_{rr_0}$, where $r_0$ has all elements zero. The Markov chain then mixes in one step to a uniform distribution on ${\cal V}_{\d,\N}$ and
\begin{equation*}
(1-\alpha)G(x,y;\alpha) = (1-\alpha)\delta_{xy} + \alpha\frac{1}{\d^\N}.
\label{mix:01}
\end{equation*}
An analogous property holds on the $\N$-dimensional torus when $\V$ is uniform on $[0,1]^\N$, then $\rho_\r= \delta_{\r\r_0}$, where $\r_0$ has all entries zero. The Markov chain mixes in one step to a uniform distribution on $[0,1]^\N$ and
\begin{equation*}
(1-\alpha)G(\a,\b;\alpha) = (1-\alpha)\delta_{\a\b} + \alpha.
\label{mix:02}
\end{equation*}
\end{Example}
\subsection{de Finetti exchangeability}\label{deFinetti:sec}
In a de Finetti random walk (\ref{rweqn:000}) on ${\cal V}_{\d,\infty}$ the entries of $Z_t$ are exchangeable taking values in ${\cal V}_q$.
$Z_t$ is controlled by $V_t$ which has a random probability distribution $p$ with de Finetti measure $\mu$.
Take any $\N$ entries within $Z_t$, $Z_t[i_1],\ldots, Z_t[i_d]$ for distinct $i_1,\ldots, i_d$. For given $a_1,\ldots, a_\N \in {\cal V}_\d$ 
\[
\mathbb{P}\big (Z_t[i_1]= a_1,\ldots , Z_{t}[i_\N] = a_\N)
= \int_{ {\cal P}_{ {{\cal V}_\d}}  }p[a_1]\cdots p[a_\N]\mu(dp),
\]
which is invariant under any  $\N$ entries. $ {\cal P}_{ {{\cal V}_\d}} $ is the set of probability measures on ${\cal V}_q$ and $\mu$ is a measure on this space.

Consider a point process of \iid random variables $(V_t)_{t=1}^\infty$ on ${\cal V}_{\d}$. 
We could think of a point process $(\theta_1^{V_t})$ instead of $(V_t)$.
Let $\widehat{X}_0$ have all entries unity. Then 
\begin{equation}
\widehat{X}_t = \odot_{\tau=1}^t\theta_1^{V_\tau},
\label{deft:00}
\end{equation}
where $\odot$ means pointwise multiplication of entries.
The \pgfl of the superposition of $t$ point processes, each with one point,  $\theta_1^{V_1} \ldots , \theta_1^{V_t}$, is
\begin{equation*}
G[f;(\theta_1^{V_\tau})_{\tau=1}^t ] = G[f;(\theta_1^{V_\tau})]^t =
\Big (\int_{   {\cal P}_{ {{\cal V}_\d}}}
\sum_{j=0}^{\d-1} f(\theta_1^j)p[j] \mu(dp)\Big )^t
\label{defpgfl:02}
\end{equation*}
That is, the points $\theta_1^{V_1},\ldots, \theta_1^{V_t}$ are  
$\theta_1^{v_1},\ldots, \theta_1^{v_t}$ with probability 
$p_1[v_1]\cdots p_t[v_t]$, where $p_1,\ldots, p_t$ are exchangeable random probability distributions from a de Finetti sequence with measure $\mu$.
The \pgfl of the superposition of points 
$\theta_1^{V_1}, \ldots , \theta_1^{V_{T_\alpha}}$ 
is a mixture
\begin{align}
\mathbb{E}G[f;(\theta_1^{V_\tau})_{\tau=1}^{T_\alpha}]
&= \sum_{t=0}^\infty(1-\alpha)\alpha^t
\Big (\int_{\Delta_\d}\sum_{j=0}^{\d-1}f(\theta_1^j)p[j]\mu(dp)\Big )^t\nonumber\\[0.1cm]
&=\frac{1}{1 + \frac{\alpha}{1-\alpha}
\int_{{\cal P}_{ {{\cal V}_\d}}}\Big (1 - \sum_{j=0}^{\d-1}f(\theta_1^j)p[j]\Big )\mu(dp)}.
\label{points:01}
\end{align}
The grouped eigenvalues are
\begin{equation}
\kappa_l = \int_{{\cal P}_{ {{\cal V}_\d}}}\prod_{k=1}^{\d-1}\Big (\sum_{j=0}^{\d-1}\theta_1^{kj}p[j]\Big )^{l[k]}\mu(dp).
\label{rhoval:00}
\end{equation}
Let $\xi$ be a random variable with entries defined by
\begin{equation}
\xi[k] = \sum_{j=0}^{\d-1}\theta_1^{kj}p[j],\ k \in {\cal V}_\d.
\label{pxi:00}
\end{equation}
Equivalently see (\ref{xidef:2}) below.
Note that $\xi[0]=1$. 
The Fourier transform $p \to \xi$ is $1-1$ because of (\ref{pxi:00}) and the inverse map
\begin{equation*}
p[j] = \frac{1}{\d}\sum_{k=0}^{\d-1}\theta_1^{-kj}\xi[k],\ j \in {\cal V}_\d.
\label{xitop:00}
\end{equation*}
Let $\nu$ be the measure on $\xi$ as a change of variable from $p$ and
${\cal X}_\xi$ be the complex state space of $\xi$ which is the set of Fourier transforms of probability distributions in ${\cal P}_{ {{\cal V}_\d}}$.
Then
\begin{equation*}
\kappa_l =
\int_{{\cal X}_\xi} \prod_{k=1}^{\d-1}\xi[k]^{l_k}\nu(d\xi).
\label{kappaxi:00}
\end{equation*}
\begin{Example}
If $\d=2$ there is only one term $l_1$ in (\ref{rhoval:00}). Denoting the measure of $p[1]$ as $\mu_1$,
\[
\kappa_l = \int_{[0,1]} \big (p[0] - p[1]\big )^{l[1]} \mu_1(dp[1])
 = \int_{[0,1]} \big (1  - 2p[1]\big )^{l[1]}\mu_1(dp[1]).
\]
This is related to the spins in \cite{G2025}.
\end{Example}
There are four types of point processes that are being discussed.
\begin{enumerate}
\item [(a)]
$p_1,\ldots,p_{T_\alpha}$ are points which are random probability
 distributions with a de Finetti measure $\mu$;
\item [(b)]
$V_1,\ldots, V_{T_\alpha}$ are points in ${\cal V}_\d$ chosen from 
$p_1,\ldots,p_{T_\alpha}$;
\item [(c)]\label{points:3}
$\theta_1^{V_1},\ldots, \theta_1^{V_{T_\alpha}}$ is a mapping to the complex roots of $\d$;
\item [(d)]
$\xi_1,\ldots,\xi_{T_\alpha}$ are mean points in ${\cal V}_{\d,\infty}$ from the process in (c), the same as the Fourier transforms
\begin{equation}
\xi_a[k] =  \sum_{j=0}^{\d-1}\theta_1^{kj}p_a[j]=\mathbb{E}_{V_a}\big [\theta_1^{kV_a}\mid p_a\big ],\ a\in [T_\alpha], k \in {\cal V}_\d.
\label{xidef:2}
\end{equation}
\end{enumerate}
\begin{Remark}
It is possible to let $\mu\equiv \mu_\alpha$ and take $\alpha\to 1$, with
\begin{equation*}
\frac{\alpha}{1-\alpha}\mu_\alpha \to \mu^*,
\label{starlimit:00}
\end{equation*}
where (\ref{points:01}) is still well defined with this limit.
\end{Remark}
\begin{Example}
In the simplest case when $\d=2$, for  $j \in [T_\alpha]$:
\begin{enumerate}
\item [(a)]
$p_j = (p_j[0],p_j[1])$. A random Bernoulli distribution;
\item [(b)]
$V_j$ is Bernoulli($p_j$);
\item[(c)]
\[
\theta_1^{V_j} =
\begin{cases}
+1,&V_j=0\\
-1,&V_j=1 ;
\end{cases}
\]
\item[(d)]
\[
\xi_j[k] =
\begin{cases}
1,&k=0\\
p_j[0]-p_j[1]=1-2p_j[1],&k=1.
\end{cases}
\]
\end{enumerate}
\end{Example}
\medskip

We simplify the expressions that occur from (\ref{xidef:2}) which will lead to expressions of $(1-\alpha)G(x,y;\alpha)$ in terms of moments of random variables.
\begin{Lemma}\label{mainlemma}
Let $(p_a)_{a=1}^\infty$ be an \iid sequence, with each random probability having measure $\mu$. Equivalently let $(\xi_a)_{a=1}^\infty$ be an \iid sequence  with each random variable having probability measure $\nu$. Then
\begin{equation*}
\frac{1}{1 + \frac{\alpha}{1-\alpha}(1-\kappa_l)}
= \mathbb{E}\Big [\prod_{k=1}^{\d-1}Y[k]^{l_k}\Big ],
\label{moments:00}
\end{equation*}
where $Y=(Y[1],\ldots, Y[\d-1])$ are products of points in a $\d-1$ dimensional point process with a joint \pgfl
\begin{align}
G\big [f_1,\ldots,f_{\d-1}\big ] &=
\mathbb{E}\Big [\prod_{b=1}^{T_\alpha}\int_{{\cal X}_\xi} \prod_{k=1}^{\d-1} f_k\big (\xi_b[k]\big )\nu(d\xi_b)\Big ]\nonumber\\[0.1cm]
&=
\frac{1}{1 + \frac{\alpha}{1-\alpha}\int_{{\cal X}_\xi} \big ( 1 - \prod_{k=1}^{\d-1}f_k(\xi[k])\big ) \nu(d\xi)}.
\label{evlemma:00}
\end{align}
\end{Lemma}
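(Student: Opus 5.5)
The plan is to define $Y$ explicitly and then compute its moments by conditioning on $T_\alpha$. Let $T_\alpha$ be the geometric killing time of Proposition \ref{grouped:000}, independent of the \iid sequence $(\xi_b)_{b\geq 1}$ with law $\nu$. Set
\begin{equation*}
Y[k] = \prod_{b=1}^{T_\alpha}\xi_b[k],\quad k=1,\ldots,\d-1,
\end{equation*}
with the empty product equal to $1$ when $T_\alpha=0$. This is the product of the points of the $(\d-1)$-dimensional point process $\{(\xi_b[1],\ldots,\xi_b[\d-1])\}_{b=1}^{T_\alpha}$.

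First I would verify the \pgfl (\ref{evlemma:00}). Conditioning on $T_\alpha=t$, independence of the $\xi_b$ gives
\begin{equation*}
\mathbb{E}\Big[\prod_{b=1}^{t}\prod_{k=1}^{\d-1}f_k(\xi_b[k])\Big] = \Big(\int_{{\cal X}_\xi}\prod_{k=1}^{\d-1}f_k(\xi[k])\,\nu(d\xi)\Big)^t .
\end{equation*}
Averaging over $T_\alpha$ with weights $(1-\alpha)\alpha^t$ produces a geometric series, summed exactly as in (\ref{points:01}). The result is the second line of (\ref{evlemma:00}).

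Next I would specialise to $f_k(z)=z^{l_k}$, which turns the \pgfl into the mixed moment of $Y$. Since $\prod_b\prod_k \xi_b[k]^{l_k}=\prod_k Y[k]^{l_k}$, the left side of (\ref{evlemma:00}) becomes $\mathbb{E}\big[\prod_k Y[k]^{l_k}\big]$. On the right side, the integral $\int \prod_k\xi[k]^{l_k}\nu(d\xi)$ equals $\kappa_l$ by the change of variables $p\to\xi$ in (\ref{rhoval:00}), which is exactly (\ref{kappaxi:00}). Substituting gives $1/\big(1+\frac{\alpha}{1-\alpha}(1-\kappa_l)\big)$, which is the claim.

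The only real obstacle is justifying the interchange of expectation and the geometric series for complex-valued test functions. Since $|\xi[k]|\leq \sum_j p[j]=1$, we have $|\kappa_l|\leq 1$ and $\big|\prod_k f_k\big|\leq 1$ for the monomials used. The series $\sum_t(1-\alpha)\alpha^t\kappa_l^t$ therefore converges absolutely for $\alpha\in(0,1)$. Dominated convergence (or Fubini) then justifies exchanging the sum and the integral, and $1-\alpha\kappa_l\neq 0$. For general $f_k$ in the \pgfl I would assume $|f_k|\leq 1$, and the same argument applies.
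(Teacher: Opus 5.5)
Your proof is correct and follows the paper's argument. Like the paper, you condition on $T_\alpha$, use independence of the $\xi_b$ to obtain $\big(\int\prod_{k}f_k(\xi[k])\,\nu(d\xi)\big)^{T_\alpha}$ (which equals $\kappa_l^{T_\alpha}$ for $f_k(z)=z^{l_k}$), sum the geometric series over $T_\alpha$, and specialise to monomials. The explicit construction $Y[k]=\prod_{b=1}^{T_\alpha}\xi_b[k]$ and the bound $|\xi[k]|\leq 1$, which justifies the interchange of sum and expectation, are details the paper leaves implicit.
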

\begin{proof}
Because of independence of the points in $(\xi_a)$
\begin{align*}
\kappa_l^{T_\alpha} &=  \prod_{a=1}^{T_\alpha}\mathbb{E}\big [\prod_{k=1}^{\d-1}\Big (\xi_a[k]\Big )^{l[k]}\mid T_\alpha\Big ]
= \mathbb{E}\big [\prod_{a=1}^{T_\alpha}\Big (\prod_{k=1}^{\d-1}\xi_a[k]\Big )^{l[k]}\mid T_\alpha\Big ]\\
&= \Big (\int_{{\cal X}_\xi} \prod_{k=1}^{\d-1}\xi[k]^{l[k]}\nu(d\xi)\Big )^{T_\alpha},
\end{align*}
so
\begin{equation*}
\mathbb{E}\big [\kappa_l^{T_\alpha}\big ] =
\frac{1}{1 + \frac{\alpha}{1-\alpha}\int_{{\cal X}_\xi} \big ( 1 - \prod_{k=1}^{\d-1}\xi[k]^{l[k]}\big ) \nu(d\xi)}.
\label{kappat:00}
\end{equation*}
The lemma now follows by taking $f_k(\xi[k])= \xi[k]^{l[k]}$.
\end{proof}
In Lemma \ref{mainlemma}, $Y$ is constructed for convenience from $\d-1$ point processes where the points are a partition of points in a single de Finetti sequence. We need a countable partition of a de Finetti sequence 
\begin{equation*}
(\V_j)_{j \in \mathbb{Z}_+}
= \cup_{k=1}^{q-1}(\V_l^{(k)})_{l \in \mathbb{Z}_+},
\end{equation*}
where 
$(\V_l^{(k)})_{l \in \mathbb{Z}_+}\cap (\V_{l^\prime}^{(k^\prime)})_{l^\prime \in \mathbb{Z}_+}
= \emptyset$ if $k\ne k^\prime$. There are many possible such partitions.
Define $Y \in \mathbb{C}^\N$ as having entries $Y[0]=1$ and for $k\in [q-1]$
\begin{equation}
Y[k] = \prod_{j=1}^{T_\alpha}\theta_1^{\V^{(k)}_{j}},
\label{partition:111}
\end{equation}
the product of points in the $k^{\text{th}}$ point process.
\begin{Corollary}
The joint Laplace transform of
$-\log |Y[1]|, \ldots , -\log |Y[\d-1]|$ is
\begin{align}
\mathbb{E}\Big [e^{\sum_{k=1}^{\d-1}-\varphi_k(-\log|Y[k]|)}\Big ]
= \frac{1}{1 + \frac{\alpha}{1-\alpha}\int_{{\cal X}_\xi}\Big (1 - \prod_{k=0}^{q-1}|\xi[k]|^{\varphi_k}\Big )\nu(d\xi)}.
\label{LT:345}
\end{align}
\end{Corollary}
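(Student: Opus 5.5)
The plan is to apply the joint \pgfl (\ref{evlemma:00}) of Lemma \ref{mainlemma} with a particular choice of test functions. As in that lemma, I read $Y[k]$ as the product over the $T_\alpha$ points of the $k^{\text{th}}$ coordinate process, $Y[k]=\prod_{b=1}^{T_\alpha}\xi_b[k]$. In Lemma \ref{mainlemma} this is exactly the random variable whose moments reproduce $\prod_k \xi[k]^{l[k]}$. The definition (\ref{partition:111}) is the same object before the conditional expectation (\ref{xidef:2}) of each $\theta_1^{V}$ given $p$ is taken.

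The first step is the elementary rewriting
\[
e^{-\varphi_k(-\log|Y[k]|)}=|Y[k]|^{\varphi_k}=\prod_{b=1}^{T_\alpha}|\xi_b[k]|^{\varphi_k}.
\]
This follows from multiplicativity of the modulus. The exponential of a sum is the product over $k$, so
\[
e^{\sum_{k=1}^{\d-1}-\varphi_k(-\log|Y[k]|)}=\prod_{b=1}^{T_\alpha}\prod_{k=1}^{\d-1}|\xi_b[k]|^{\varphi_k}.
\]
This is precisely the integrand in the first line of (\ref{evlemma:00}) with $f_k(z)=|z|^{\varphi_k}$.

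The second step repeats the argument in the proof of Lemma \ref{mainlemma}. Condition on $T_\alpha=t$ and use that the $\xi_b$ are \iid with law $\nu$; the conditional expectation is then $\big(\int_{{\cal X}_\xi}\prod_k|\xi[k]|^{\varphi_k}\nu(d\xi)\big)^t$. Averaging over the geometric law $(1-\alpha)\alpha^t$ and summing the geometric series gives the right side of (\ref{LT:345}). Including the factor $k=0$ in the product there is harmless, because $\xi[0]=1$.

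The main point to check is legitimacy rather than algebra. Each $\xi[k]$ is a Fourier coefficient of a probability distribution, so $|\xi[k]|\le 1$. Hence $-\log|Y[k]|\in[0,\infty]$ (the value $\infty$ occurs when some $\xi_b[k]=0$, consistently with $|0|^{\varphi_k}=0$ for $\varphi_k>0$), and the transform is well defined for $\varphi_k\ge 0$. The same bound shows that the integrand lies in $[0,1]$, so $\alpha\int\prod_k|\xi[k]|^{\varphi_k}\,d\nu<1$ and the geometric series converges. With these observations the corollary is just Lemma \ref{mainlemma} evaluated at $f_k=|\cdot|^{\varphi_k}$.
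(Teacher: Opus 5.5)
Your proposal is correct and is the intended argument. The paper gives no separate proof: the corollary is the p.g.fl.\ (\ref{evlemma:00}) of Lemma~\ref{mainlemma} evaluated at $f_k(z)=|z|^{\varphi_k}$, and $\xi[0]=1$ makes the $k=0$ factor harmless. Your reading $Y[k]=\prod_{b=1}^{T_\alpha}\xi_b[k]$ is in fact forced rather than a convenience. Taken literally, (\ref{partition:111}) is a product of roots of unity, so $|Y[k]|\equiv 1$ and the left side would be identically $1$; since the modulus does not commute with the conditional expectation (\ref{xidef:2}), you should not push the remark that the two are ``the same object'' any further.
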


\begin{Example}
Jumps are made one step to the left or right with probability $\gamma/2$ or no step with probability $1-\gamma$ where $\gamma$ is a random variable with a measure we call $\mu$. Then
\begin{equation*}
\kappa_l = \int_{[0,1]}\prod_{k=1}^{\d-1}\big (1-\gamma +\gamma \cos \big (2\pi/\d\cdot k\big )\big )^{l_k}\mu(d\gamma).
\end{equation*}
\begin{itemize}
\item[(a)] $p[0]=1-\gamma$, $p[1]=p[\d-1]=\gamma/2$, where $\gamma$ is random with probability measure $\mu$.
\item[(b)] $V_1,\ldots, V_{T_\alpha}$ take values in ${\cal V}_\d$ with random $\gamma$.
\item[(c)] The transforms $\theta_1^V$ take values $1,\theta_1, \theta_1^{-1}$.
\item[(d)] $\xi[k]=1-\gamma + \gamma\cos(2\pi/\d\cdot k)$,
 $k\in {\cal V}_\d$.
 \end{itemize}
 In Lemma \ref{mainlemma} the \pgfl
 \begin{align*}
G\big [f_1,\ldots,f_{\d-1}\big ]&=
\frac{1}{1 + \frac{\alpha}{1-\alpha}\int_{[0,1]} \big ( 1 - \prod_{k=1}^{\d-1}f_k(1-\gamma + \gamma\cos(2\pi/\d\cdot k)\big ) \mu(d\gamma)}.
\end{align*}
The Laplace transform (\ref{LT:345}) is 
\begin{align*}
&\mathbb{E}\Big [e^{\sum_{k=1}^{\d-1}-\varphi_k(-\log|Y[k]|)}\Big ]\nonumber\\
&= \frac{1}{1 + \frac{\alpha}{1-\alpha}\int_{[0,1]}1 -
\prod_{k=1}^{\d-1}\big (1-\gamma + \gamma\cos(2\pi/\d\cdot k)\big )^{\varphi_k}\mu(d\gamma)}.
\label{LT:777}
\end{align*}
\end{Example}
\begin{Remark}
Let $T_{\alpha,\phi}$ be a negative binomial random variable with \pgf
$(1-\alpha)^\phi(1-\alpha s)^{-\phi}$, $\phi > 0$. Then
\begin{equation*}
\mathbb{P}\big (T_{\alpha,\phi} = k\big ) = (1-\alpha)^\phi\alpha^k\frac{\phi_{(k)}}{k!},\ k\in \mathbb{N},
\label{Tid:000}
\end{equation*}
where $\phi_{(k)} = \phi(\phi+1)\cdots (\phi+k-1)$. $T_{\alpha,\frac{1}{2}}$ is important in this paper.
A modified proof in Lemma \ref{mainlemma} shows that 
\begin{equation*}
\mathbb{E}\big [\kappa_l^{T_{\alpha,\phi}}\big ] =
\frac{1}
{\Big (1 + \frac{\alpha}{1-\alpha}\int_{{\cal X}_\xi}\big ( 1 - \prod_{k=1}^{d-1}\xi[k]^{l[k]}\big ) \nu(d\xi)\Big )^\phi}.
\label{kappat:01}
\end{equation*}   
\color{blue} 
\end{Remark}
The next lemma is needed in the construction of Gaussian fields.
\begin{Lemma}\label{halflemma}
Let  the entries of $Y$ be the products of points in the process with \pgfl (\ref{evlemma:00}) and the entries of $Y_{\frac{1}{2}}$ and $Y^\prime_{\frac{1}{2}}$ be products of points in two independent processes with \pgfl 
\[
\frac{1}{\Big (1 + \frac{\alpha}{1-\alpha}\int_{{\cal X}_\xi}\big ( 1 - \prod_{k=1}^{\d-1}f_k(\xi[k])\big ) \nu(d\xi)\Big)^{\frac{1}{2}}}.
\]
Then
\begin{equation*}
\mathbb{E}\big [\prod_{k=1}^{q-1} Y_{\frac{1}{2}}[k]^{l[k]}\big ]
\mathbb{E}\big [\prod_{k=1}^{q-1}{Y_{\frac{1}{2}}^{\prime}}[k]^{l[k]}\big ]
= \mathbb{E}\big [\prod_{k=1}^{q-1} Y[k]^{l[k]}\big ].
\end{equation*}
\end{Lemma}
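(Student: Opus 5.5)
The plan is to evaluate each side of the identity as a p.g.fl. at the special test functions $f_k(\xi[k])=\xi[k]^{l[k]}$, exactly as in the proof of Lemma \ref{mainlemma}, and then compare. First I will make precise the construction of $Y_{\frac12}$. In the remark following Lemma \ref{mainlemma}, the number of points is the negative binomial time $T_{\alpha,\frac12}$, with \pgf $(1-\alpha)^{1/2}(1-\alpha s)^{-1/2}$. The entries of $Y_{\frac12}$ are the products over the points $b=1,\ldots,T_{\alpha,\frac12}$ of the $k$-th coordinate contributions, built from an \iid sequence $(\xi_b)$ with law $\nu$ that is independent of $T_{\alpha,\frac12}$. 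The process for $Y^\prime_{\frac12}$ is an independent copy of this construction.

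The first step is to condition on $T_{\alpha,\frac12}$. Independence of the $\xi_b$ gives
\begin{equation*}
\mathbb{E}\Big[\prod_{k=1}^{\d-1}Y_{\frac12}[k]^{l[k]}\,\Big|\,T_{\alpha,\frac12}\Big]=\kappa_l^{T_{\alpha,\frac12}},
\end{equation*}
where
\begin{equation*}
\kappa_l=\int_{{\cal X}_\xi}\prod_{k=1}^{\d-1}\xi[k]^{l[k]}\nu(d\xi).
\end{equation*}
Averaging over $T_{\alpha,\frac12}$ with its \pgf gives
\begin{equation*}
\mathbb{E}\Big[\prod_{k=1}^{\d-1}Y_{\frac12}[k]^{l[k]}\Big]=\Big(1+\tfrac{\alpha}{1-\alpha}(1-\kappa_l)\Big)^{-1/2}.
\end{equation*}
This is the modified proof of Lemma \ref{mainlemma} with $\phi=\tfrac12$. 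It is also just the stated \pgfl evaluated at $f_k(\xi[k])=\xi[k]^{l[k]}$. The same value holds for $Y^\prime_{\frac12}$, since it has the same law.

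The second step is to multiply the two factors. This gives $\big(1+\frac{\alpha}{1-\alpha}(1-\kappa_l)\big)^{-1}$, which by Lemma \ref{mainlemma} equals $\mathbb{E}\big[\prod_k Y[k]^{l[k]}\big]$. Equivalently, $T_{\alpha,\frac12}+T^\prime_{\alpha,\frac12}$ has the law of $T_\alpha$, geometric with \pgf $(1-\alpha)(1-\alpha s)^{-1}$. So the pooled points of the two independent processes form a process with \pgfl (\ref{evlemma:00}), and the product of the corresponding moments is the moment for $Y$. I would state this as a remark, since it is the structural reason behind the lemma: the product $Y_{\frac12}\odot Y^\prime_{\frac12}$ is distributed as $Y$.

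The main obstacle is only a technical one: justifying the square root and the interchange of expectations when $\kappa_l$ is complex. The standing assumption is that the eigenvalues are real. Since $|\xi[k]|\le1$ we also have $|\kappa_l|\le 1$, so $1+\frac{\alpha}{1-\alpha}(1-\kappa_l)\ge 1>0$. Hence the principal square root is unambiguous and the negative binomial series $\sum_t \mathbb{P}(T_{\alpha,\frac12}=t)\kappa_l^t$ converges absolutely for $\alpha<1$. This justifies the term-by-term evaluation by dominated convergence.
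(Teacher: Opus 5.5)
Your proposal is correct and is essentially the paper's argument. The paper evaluates the half-power p.g.fl.\ at $f_k(\xi[k])=\xi[k]^{l[k]}$ to get $\bigl(1+\frac{\alpha}{1-\alpha}(1-\kappa_l)\bigr)^{-1/2}$ for each of $Y_{\frac12}$ and $Y'_{\frac12}$, multiplies the two factors, and identifies the result with the moment of $Y$ via Lemma \ref{mainlemma}; your conditioning on $T_{\alpha,\frac12}$ only makes explicit how that p.g.fl.\ value arises, and your observation that $Y_{\frac12}\odot Y'_{\frac12}$ has the law of $Y$ is the same one the paper makes later in the proof of Proposition \ref{gfclt:00}.
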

\begin{proof}
Recall that in a multitype \pgfl with $q-1$ point processes 
$(\xi_{j_1}[1]),\ldots, (\xi_{j_{q-1}}[q-1])$,
\[
G[f_1,\ldots ,f_{q-1}]
= \mathbb{E}\Big [\prod_{k=1}^{q-1}\prod_{j_k=1}^\infty f(\xi_{j_k}[k])\Big ].
\]
Choosing $f_k(\xi[k]) = \xi[k]^{l[k]}$
\[
 \mathbb{E}\big [\prod_{k=1}^{q-1} Y^{l[k]}_{\frac{1}{2}}[k]\big ]
 = \frac{1}
 {\sqrt{1 + \frac{\alpha}{1-\alpha}
 \int_{\chi_\xi}\big ( 1 - \prod_{k=1}^{q-1}\xi[k]^{l[k]}\big )\nu(d\xi)}}
 \]
 and similarly for $Y^\prime$.
 Therefore
 \begin{align*}
 \mathbb{E}\big [\prod_{k=1}^{q-1} Y_{\frac{1}{2}}[k]^{l[k]}\big ]
 \mathbb{E}\big [\prod_{k=1}^{q-1}{Y_{\frac{1}{2}}^{\prime}}[k]^{l[k]}\big ]
&= \frac{1}
 {1 + \frac{\alpha}{1-\alpha}
 \int_{\chi_\xi}\big ( 1 - \prod_{k=1}^{q-1}\xi[k]^{l[k]}\big )\nu(d\xi)}
 \nonumber \\
 &=\mathbb{E}\big [\prod_{k=1}^{q-1} Y[k]^{l[k]}\big ]. 
 \end{align*}
\end{proof}
\begin{Example}
If $p$ has a uniform distribution on ${\cal P}_{ {{\cal V}_\d}}$ with density $\frac{1}{(\d-1)!},\ p \in {\cal P}_{ {{\cal V}_\d}}$, then
\[
G\big [f_1,\ldots,f_{\d-1}\big ]=
\frac{1}{1 + \frac{\alpha}{1-\alpha}\int_{{\cal P}_{ {{\cal V}_\d}}}\frac{1}{(\d-1)!}
\big ( 1 - \prod_{k=1}^{\d-1}f_k( \sum_{j=0}^{\d-1}\theta_1^{kj}p[j] )  \big )dp}.
\]
\end{Example}
\begin{Remark}
$(1-\alpha)G(x,y;\alpha)$ is always real so we can take the real part of the right side of (\ref{killed:00}).
Then 
\begin{align}
&(1-\alpha)G(x,y;\alpha)\nonumber\\
&= \sum_{r\in {\cal V}_{\d,\N}}\mathfrak{Re}\Big (\frac{1}{ 1 + \frac{\alpha}{1-\alpha}(1-\rho_r)}\Big )
\cos \Big (\sum_{k=1}^\N r[k](x[k]-y[k])2\pi/\d\Big )
\nonumber \\
&-\sum_{r\in {\cal V}_{\d,\N}}\mathfrak{Im}\Big (\frac{1}{ 1 + \frac{\alpha}{1-\alpha}(1-\rho_r)}\Big )
\sin \Big (\sum_{k=1}^\N r[k](x[k]-y[k])2\pi/\d\Big ).
\label{real:00}
\end{align}
If the sequence $(\rho_r)$ is real then the second line in (\ref{real:00}) is zero.
\end{Remark}
\subsection{Random walk on an infinite-dimensional torus}
Let $(\B_t)$ be a stochastic process on a $\N$-dimensional torus described in Proposition 
\ref{torusprop} with
\begin{equation*}
\rho_\r = \mathbb{E}\big [e^{2\pi i \V\dt \r}\big ],\  \r \in \mathbb{N}^\N.
\label{rho:89}
\end{equation*}
Take the entries of $\V$ to be $\N$ exchangeable random variables from a de Finetti sequence with measure $\mu$ on a space $\varPhi$ such that for distinct $i_1,i_2,\ldots,i_\N \in \mathbb{Z}_+$ the probability measure of 
$\V[i_1],\ldots, \V[i_\N]$ is
\begin{equation*}
\nu_\N(d\V) = \int_\varPhi\otimes_{j=1}^d \nu(d\v_{i_j};\varphi)\ \mu(d\varphi).
\label{deFmeasure:00}
\end{equation*}
That is, a mixture of independent measures $\nu^\N(\cdot;\varphi)$ with respect to the de Finetti measure $\mu$ on $\varPhi$. By analogy with (\ref{deft:00}) let 
\begin{equation}
\widehat{\B}_t = \odot_{\tau=1}^te^{2\pi i\V_\tau}.
\label{deft:00a}
\end{equation}
The \pgfl of the superposition of $T_\alpha$ point processes, each with one point, 
$e^{2\pi i\V_1}, \ldots , e^{2\pi i\V_{T_\alpha}}$ is
\begin{align*}
\mathbb{E}G[f;(e^{2\pi i \V_\tau})_{\tau=1}^{T_\alpha}]
&=\frac{1}{1 + \frac{\alpha}{1-\alpha}
\int_{\varPhi}\int_{[0,1]}\Big (1 - f(e^{2\pi i x})\Big )\nu(dx;\varphi)\mu(d\varphi)}.
\label{points:01a}
\end{align*}
In an analogy of Lemma \ref{mainlemma} and (\ref{partition:111}), we need a countable partition of a de Finetti sequence $(\V_j)_{j \in \mathbb{Z}_+}$ 
into $\cup_{k=1}^\infty(\V^{(k)}_j)_{j\in \mathbb{Z}_+}$.
Define $Y \in \mathbb{C}^\N$ as having entries 
\[
Y[k] = \prod_{j=1}^{T_\alpha}e^{2\pi i\V^{(k)}_{j}},
\]
the product of points in the $k^{\text{th}}$ point process.\\
\begin{Lemma}\label{inflemma}
Let $(\B_t)$ be an infinite-dimensional random walk on a torus such that the distribution of any $\N$ entries is described by (\ref{deft:00a}). 
The joint \pgfl of entries in the partitions $\big ( e^{2\pi i  \V^{(k)}_{j}}\big )_{j=1}^{T_\alpha}$, $k\in [d]$ is
\begin{align}
&G\Big [f_1,f_2,\ldots, f_\N\Big ]\nonumber\\
\nonumber\\
&=\frac{1}{1 + \frac{\alpha}{1-\alpha}
\int_{\varPhi}\Big (1 - \prod_{k=1}^\N\int_{[0,1]}f_k(e^{2\pi i x})\nu(dx;\phi)\Big )\mu(d\varphi)}.
\label{points:01b}
\end{align}
Let $Y$ be such that $Y[k]$ is the product of points in 
$\big ( e^{2\pi i  \V^{(k)}_{j}}\big )_{j=1}^{T_\alpha}$, $k \in [\N]$.
Then in the distribution of $\N$ variables
\begin{equation}
\frac{1}{1 + \frac{\alpha}{1-\alpha}(1- \rho_\r)}
= \mathbb{E}\big [ \prod_{k=1}^\N Y[k]^{\r[k]}\big ].
\label{moments:01}
\end{equation}
\end{Lemma}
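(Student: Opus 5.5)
The plan follows the proof of Lemma \ref{mainlemma}, with the finite set ${\cal V}_\d$ replaced by $[0,1]$ and the de Finetti measure on ${\cal P}_{{\cal V}_\d}$ replaced by the mixture $\int_\varPhi \nu(\cdot;\varphi)\mu(d\varphi)$.

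\emph{Step 1: the joint \pgfl (\ref{points:01b}).} First condition on $T_\alpha=t$. The $t$ steps of the walk are \iid. At step $\tau$ a parameter $\varphi_\tau\sim\mu$ is drawn. Given $\varphi_\tau$, the entries $\V_\tau[1],\ldots,\V_\tau[\N]$ are \iid with law $\nu(\cdot;\varphi_\tau)$. The $k$-th entry of step $\tau$ is the point assigned to the $k$-th process in the partition. Hence, conditionally on $T_\alpha=t$,
\[
\mathbb{E}\Big[\prod_{\tau=1}^t\prod_{k=1}^\N f_k\big(e^{2\pi i\V_\tau[k]}\big)\Big]
=\Big(\int_\varPhi\prod_{k=1}^\N\int_{[0,1]}f_k(e^{2\pi i x})\nu(dx;\varphi)\mu(d\varphi)\Big)^t .
\]
This uses independence across $\tau$ and, within each $\tau$, conditional independence given $\varphi_\tau$. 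Averaging over $\mathbb{P}(T_\alpha=t)=(1-\alpha)\alpha^t$ gives a geometric series $(1-\alpha)\sum_t\alpha^t c^t=(1-\alpha)/(1-\alpha c)$. Rewriting $1-\alpha c=(1-\alpha)\{1+\frac{\alpha}{1-\alpha}(1-c)\}$ gives (\ref{points:01b}). The series converges because $|c|\le1$ whenever $|f_k|\le1$.

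\emph{Step 2: the moments (\ref{moments:01}).} Choose $f_k(z)=z^{\r[k]}$. Then $\prod_{k}\prod_{j}f_k(e^{2\pi i\V^{(k)}_j})=\prod_k Y[k]^{\r[k]}$, which is bounded by $1$ in modulus. So the left side of (\ref{points:01b}) equals $\mathbb{E}\big[\prod_kY[k]^{\r[k]}\big]$. For this choice,
\[
\int_\varPhi\prod_{k=1}^\N\int_{[0,1]}e^{2\pi i\r[k]x}\nu(dx;\varphi)\mu(d\varphi)
=\int_\varPhi\int\prod_{k}e^{2\pi i\r[k]\v_k}\otimes_k\nu(d\v_k;\varphi)\mu(d\varphi)
=\mathbb{E}\big[e^{2\pi i\V\dt\r}\big]=\rho_\r ,
\]
by Fubini and the definition of $\nu_\N$. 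Substituting into (\ref{points:01b}) gives (\ref{moments:01}). This matches the eigenvalue in Proposition \ref{PropB}.

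\emph{Main obstacle.} The only delicate point is bookkeeping rather than analysis. The "partition" of the de Finetti sequence must be read so that the $k$-th process collects the $k$-th coordinates of the successive increments $\V_1,\ldots,\V_{T_\alpha}$. The $\N$ coordinates within one step share the same $\varphi$, while different steps carry independent $\varphi$'s. If instead the points of different processes were taken from independent de Finetti draws, the inner product over $k$ would sit outside the $\mu$-integral, and (\ref{points:01b}) would fail. So I would state this identification explicitly and then carry out the two computations above, which are routine.
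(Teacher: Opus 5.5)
Your proposal is correct and takes the paper's approach: the paper's proof is essentially your Step 2, computing $\rho_r$ through the de Finetti representation and substituting $f_k(z)=z^{r[k]}$ into (\ref{points:01b}), while (\ref{points:01b}) itself is stated without derivation (by analogy with (\ref{points:01})). Your Step 1 supplies that derivation explicitly via the geometric mixture over $T_\alpha$, and your identification that the $d$ coordinates within one step share a common $\varphi$ is the reading that makes (\ref{points:01b}) correct.
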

\begin{proof}
By definition and the de Finetti measure
\begin{align*}
\rho_\r &= \mathbb{E}\big [\prod_{k=1}^de^{2\pi i \V[k]\r[k]}\big ]
= \int\prod_{k=1}^\N\int_{[0,1]}e^{2\pi i x\r[k]}\nu(dx;\phi)\mu(d\varphi).
\end{align*}
Now the left side of (\ref{moments:01}) is seen to be equal to the \pgfl (\ref{points:01b}) when $f_k(e^{2\pi i x}) = e^{2\pi i x r[k]}$. This means taking the products of points in the $\N$ point processes $Y[k]$ to powers $r[k]$. 
Then (\ref{moments:01}) follows.
\end{proof}
\begin{Proposition}
In an infinite-dimensional de Finetti random walk on a torus the Green function of any $\N$ entries has eigenvector/eigenvalue equations for $\r\in \mathbb{N}^d$ of
\begin{equation*}
(1-\alpha)\int_{\b\in [0,1]^d}e^{2\pi i \b\dt \r}G(\a,\b;\alpha)d\b 
= \mathbb{E}\big [ \prod_{k=1}^\N Y[k]^{\r[k]}\big ]\cdot e^{2\pi i \a\dt r}.
\label{infGreen:00}
\end{equation*}
\end{Proposition}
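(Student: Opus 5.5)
The plan is to combine Proposition \ref{PropB} with Lemma \ref{inflemma}. First I restrict attention to any $\N$ entries $i_1,\ldots,i_\N$ of the infinite-dimensional walk. By the de Finetti construction, the marginal increments $\V=(\V[i_1],\ldots,\V[i_\N])$ are i.i.d. over time with law $\nu_\N$. So the $\N$-dimensional marginal $(\B_t[i_1],\ldots,\B_t[i_\N])$ is itself a homogeneous random walk on $\mathbb{T}^\N$ of the type in Proposition \ref{torusprop}. Its eigenfunctions are $e^{2\pi i \b\dt\r}$ and its eigenvalues are $\rho_\r=\mathbb{E}[e^{2\pi i\V\dt\r}]$. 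Exchangeability ensures that this law, and hence $\rho_\r$, does not depend on the choice of indices.

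Next I compute the eigenvalue equation directly, as in the proof of Proposition \ref{PropB}. Expanding $G(\a,\b;\alpha)=\sum_t\alpha^t$ times the $t$-step transition density and interchanging sum and integral (justified by $|\alpha^t e^{2\pi i\b\dt\r}|\le\alpha^t$ and Fubini), I get
\[
(1-\alpha)\int_{[0,1]^\N}e^{2\pi i\b\dt\r}G(\a,\b;\alpha)\,d\b=(1-\alpha)\sum_{t}\alpha^t\rho_\r^{t}\,e^{2\pi i\a\dt\r}.
\]
This uses $\mathbb{E}[e^{2\pi i\B_t\dt\r}\mid\B_0=\a]=e^{2\pi i\a\dt\r}\rho_\r^t$, which follows from independence of the increments $\V_1,\ldots,\V_t$ and from $\B_t=\a+\sum_{\tau}\V_\tau \bmod 1$. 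The geometric sum gives $\big(1+\frac{\alpha}{1-\alpha}(1-\rho_\r)\big)^{-1}e^{2\pi i\a\dt\r}$, since $|\rho_\r|\le1$ and $\alpha<1$.

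Finally I invoke (\ref{moments:01}) of Lemma \ref{inflemma}: the factor $\big(1+\frac{\alpha}{1-\alpha}(1-\rho_\r)\big)^{-1}$ equals $\mathbb{E}\big[\prod_{k=1}^\N Y[k]^{\r[k]}\big]$. To check this I write $\rho_\r$ in de Finetti form, $\int_\varPhi\prod_k\int e^{2\pi i x\r[k]}\nu(dx;\varphi)\mu(d\varphi)$. I then substitute $f_k(e^{2\pi i x})=e^{2\pi i x\r[k]}$ into the p.g.fl. (\ref{points:01b}).

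The main obstacle is bookkeeping rather than analysis. I need to confirm that $\rho_\r$ computed from the $\N$-entry marginal coincides with the expression obtained from the partitioned point processes. This requires that the partition $\cup_k(\V^{(k)}_j)$ assigns to coordinate $k$ exactly the entries $\V_\tau[i_k]$, $\tau\le T_\alpha$. It also requires that within a single time step all $\N$ coordinates share the same directing $\varphi$, which is why the product over $k$ sits inside the $\mu(d\varphi)$ integral in (\ref{points:01b}).

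Multiplying the eigenvalue identity from the second step by $e^{2\pi i\a\dt\r}$ then gives the stated equation.
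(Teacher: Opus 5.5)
Your proposal is correct and follows the paper's route. The paper's proof simply combines the eigenvalue computation of Proposition~\ref{PropB} with the moment identity (\ref{moments:01}) of Lemma~\ref{inflemma}, exactly as you do. One small wording fix: in your last step you should \emph{substitute} the moment expression rather than multiply by $e^{2\pi i\a\dt\r}$, since that factor is already present after your second step.
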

\begin{proof}
The proof follows from Proposition \ref{PropB} with the special form (\ref{moments:01}).
\end{proof}
$Y[1], Y[2], \ldots$ form a de Finetti sequence. Let their distribution be described by independent measures  $\nu_Y(\ \cdot \ ;\phi)$ where there is a mixing measure $\mu_Y$ on $\phi$.
Then
\begin{align*}
(1-\alpha)G(\a,\b;\alpha) &= \mathbb{E}\Big [\sum_{\r\in \mathbb{N}^d}
\prod_{k=1}^d Y[k]^{\r[k]}e^{2\pi i (\a[k]-\b[k] )\r[k]}\Big ]
\nonumber\\
&= \mathbb{E}\Big [\prod_{k=1}^d\sum_{\r[k]=0}^\infty \big (Y[k]e^{2\pi i(\a[k]-\b[k])}\big )^{\r[k]}\Big ]\nonumber\\
&= \int_{\Phi}\Big (\prod_{k=1}^d\int_{[0,1]}\Big (\sum_{\r[k]=0}^\infty y^{\r[k]}e^{2\pi i(\a[k]-\b[k])\r[k]}\Big )\nu_Y(dy;\phi)\Big )
\mu_Y(d\phi).
\end{align*}
%
\section{Gaussian free fields}\label{GFFSection}
Construct a complex mean zero Gaussian free field $(g_x)_{x\in {\cal V}_{\d,\N}}$ by taking
\begin{equation}
\text{Cov}(g_x,g_y) = \mathbb{E}\big [g_x\overline{g}_y\big ] = (1-\alpha)G(x,y;\alpha).
\label{covdef:00}
\end{equation}
\begin{Example}
The simplest Gaussian field that can be constructed is when
\begin{equation*}
(1-\alpha)G(x,y;\alpha) = (1-\alpha)\delta_{xy} + \alpha\frac{1}{\d^\N}
\label{mix:01a}
\end{equation*}
from Example  \ref{Ex:1}. Then
\begin{equation*}
g_x = \sqrt{1-\alpha}\ g_x^* + \sqrt{\alpha\frac{1}{\d^\N}}\ g^*_\emptyset,
\label{simple:123}
\end{equation*}
where $(g^*_x)$ and $g^*_\emptyset$ are independent and N$(0,1)$. This is similar example to that in Corollary 1 of \cite{G2025}.
\end{Example}
We need a reversibility condition that $G(x,y;\alpha)={G(y,x;\alpha)}$ for (\ref{covdef:00}) to be a proper definition.
This occurs $\iff$ there are symmetric jump probabilities $\iff$ $\rho_r$ is real, where the symbol $\iff$ means \emph{if and only if}.  The reversiblity condition is assumed.
\subsubsection{Gaussian field on ${\cal V}_{q,\N}$}
\begin{Proposition}
Define a complex Gaussian field on ${\cal V}_{\d,\N}$ by
\begin{equation}
g_x = \frac{1}{\sqrt{\d^\N}}
\sum_{r\in {\cal V}_{\d,\N}}\frac{1}{\sqrt{ 1 + \frac{\alpha}{1-\alpha}(1-\rho_r)     }}
{\theta_1}^{x\dt r}
\ \g_r,
\label{spectral:05}
\end{equation}
where $(\g_r)$ are \iid standard Gaussian random variables. Assume that the entries of $(\rho_r)$ are real.
Then $(g_x)$ has a covariance function $(1-\alpha)G(x,y;\alpha)$ with spectral expansion (\ref{killed:00}).
\end{Proposition}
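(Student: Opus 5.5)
The plan is to compute $\mathbb{E}[g_x\overline{g}_y]$ directly from (\ref{spectral:05}), using bilinearity of the covariance and independence of the $(\g_r)$, and then match the result term by term with (\ref{killed:00}) of Proposition \ref{proposition:expansions}. Write
\[
c_r = \Big(1 + \tfrac{\alpha}{1-\alpha}(1-\rho_r)\Big)^{-1/2}.
\]
Since $\rho_r$ is real and $|\rho_r|\le 1$ (it is a characteristic function value $\mathbb{E}[\theta_1^{V\dt r}]$), we have $1-\rho_r\ge 0$. Hence the quantity under the square root is at least $1$, so $c_r$ is a well defined positive real number and $\overline{c_r}=c_r$. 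This positivity check is the only place the reality assumption on $(\rho_r)$ enters, and I will state it first.

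Next, expand
\[
\mathbb{E}[g_x\overline{g}_y] = \frac{1}{\d^\N}\sum_{r,s\in{\cal V}_{\d,\N}} c_r c_s\, \theta_1^{x\dt r}\,\overline{\theta_1^{y\dt s}}\ \mathbb{E}[\g_r\overline{\g_s}].
\]
For \iid standard Gaussians, $\mathbb{E}[\g_r\overline{\g_s}]=\delta_{rs}$, whether they are real or circular complex normalized with $\mathbb{E}|\g_r|^2=1$. This collapses the double sum to
\[
\frac{1}{\d^\N}\sum_r c_r^2\,\theta_1^{(x-y)\dt r},
\]
using $\overline{\theta_1^{y\dt r}}=\theta_1^{-y\dt r}$. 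Since $c_r^2 = \big(1+\frac{\alpha}{1-\alpha}(1-\rho_r)\big)^{-1}$, this is exactly the right side of (\ref{killed:00}), which equals $(1-\alpha)G(x,y;\alpha)$.

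The remaining subtlety concerns what ``Gaussian field with this covariance'' means for complex variables. If the $\g_r$ are real, one should also check the pseudo-covariance $\mathbb{E}[g_xg_y]$, or else restrict to the Hermitian covariance as defined in (\ref{covdef:00}). I would note that (\ref{covdef:00}) defines the covariance as $\mathbb{E}[g_x\overline g_y]$, so only that quantity needs verification. I would also remark that because $\rho_r$ is real and $\rho_{-r}=\overline{\rho_r}=\rho_r$, the resulting kernel is real and symmetric, consistent with the reversibility condition $G(x,y;\alpha)=G(y,x;\alpha)$.

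This step is the main, though mild, obstacle. The fix I would try first is to take the $\g_r$ circular complex, with independent real and imaginary parts of variance $1/2$. Alternatively one could pair $r$ with $-r$ to obtain a real field via the cosine form (\ref{real:00}).
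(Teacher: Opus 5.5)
Your proposal is correct and follows the paper's route. You expand $\mathbb{E}[g_x\overline{g}_y]$, collapse the double sum by independence of the $(\g_r)$, and use reality of $\rho_r$ (with $|\rho_r|\le 1$) to get $c_r\overline{c_r}=c_r^2=\big(1+\frac{\alpha}{1-\alpha}(1-\rho_r)\big)^{-1}$, which is exactly (\ref{killed:00}). The paper phrases the squaring step through Lemma~\ref{halflemma}, which is the same identity in point-process form, and your pseudo-covariance discussion is not needed because (\ref{covdef:00}) defines the covariance as $\mathbb{E}[g_x\overline{g}_y]$.
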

\begin{proof}
It is straighforward to show the covariance function is correct, using Lemma \ref{halflemma}, and taking care that entries of $(\rho_r)$ are real.
\end{proof}
\begin{Corollary}
Let 
\[
g^*_y := \sum_{r\in \mathbb{V}_{q,d}}\theta_1^{y\dt r}\g_r,\ {y\in {\cal V}_{q,d}}.
\]
There is a 1-1 correspondence between the  Gaussian fields
$\big (g^*_y)_{y\in {\cal V}_{q,d}}$ and $(g_x)_{x\in {\cal V}_{q,d}}$, coupled through $(\g_r)$.
\end{Corollary}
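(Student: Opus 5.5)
The plan is to write down the linear map from $(\g_r)$ to $(g^*_y)$ and to $(g_x)$, and check that both maps are invertible. The map $\g\mapsto g^*$ is a discrete Fourier transform on ${\cal V}_{\d,\N}$, and $g$ is obtained from $\g$ by composing a diagonal rescaling with that same transform. Once both maps are shown to be bijections, the correspondence $g^*\leftrightarrow \g \leftrightarrow g$ follows.

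First, use the character orthogonality relation
\[
\frac{1}{\d^\N}\sum_{y\in{\cal V}_{\d,\N}}\theta_1^{y\dt (r-s)}=\delta_{rs},\qquad r,s\in{\cal V}_{\d,\N}.
\]
This holds because each coordinate gives $\frac1\d\sum_{j=0}^{\d-1}\theta_1^{j(r[k]-s[k])}=\delta_{r[k]s[k]}$, with differences taken mod $\d$. It gives the inversion formula
\[
\g_r=\frac{1}{\d^\N}\sum_{y}\theta_1^{-y\dt r}g^*_y .
\]
So $\g\mapsto g^*$ is $\sqrt{\d^\N}$ times a unitary matrix, and in particular it is invertible.

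Second, define the weights $c_r=\big(1+\frac{\alpha}{1-\alpha}(1-\rho_r)\big)^{-1/2}$. With $(\rho_r)$ real we have $|\rho_r|\le 1$, so $1-\rho_r\in[0,2]$. The quantity under the square root is therefore real and at least $1$, which gives $0<c_r\le 1$. By (\ref{spectral:05}),
\[
g_x=\d^{-\N/2}\sum_r c_r\theta_1^{x\dt r}\g_r .
\]
Applying the same orthogonality relation yields
\[
c_r\g_r=\d^{-\N/2}\sum_x\theta_1^{-x\dt r}g_x .
\]
Combining this with the first step gives the explicit coupling
\[
g_x=\frac{1}{\d^{3\N/2}}\sum_{r}\sum_{y} c_r\,\theta_1^{(x-y)\dt r}\,g^*_y,
\qquad
g^*_y=\frac{1}{\d^{\N/2}}\sum_r\sum_x c_r^{-1}\theta_1^{(y-x)\dt r}g_x .
\]
Both formulas are finite sums, because ${\cal V}_{\d,\N}$ is finite, so each field is a deterministic linear function of the other.

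The only step needing care is that every $c_r$ is nonzero and finite, so that the diagonal rescaling can be inverted. This is exactly where the positivity of $1+\frac{\alpha}{1-\alpha}(1-\rho_r)$ for $\alpha\in(0,1)$ and real $\rho_r$ is used, which is the standing reversibility assumption. I would also remark that the kernel $\frac1{\d^\N}\sum_r c_r\theta_1^{(x-y)\dt r}$ is a convolution kernel on $\mathbb{Z}_\d^\N$. Consequently $g=K\ast g^*/\d^{\N/2}$, with $K$ having Fourier multipliers $c_r$, which is the square-root kernel of $(1-\alpha)G$.
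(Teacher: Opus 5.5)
Your proof is correct and follows essentially the paper's route. The paper also inverts (\ref{spectral:05}) by character orthogonality to get $\sqrt{q^d}\,c_r\mathfrak{g}_r=\sum_x\theta_1^{-x\cdot r}g_x$, as in (\ref{inversegf:00}), and substitutes this into $g^*_y$ to write $g^*$ linearly in $g$, as in (\ref{equivalence:00}); your version additionally writes out the reverse map $g^*\mapsto g$ and checks $0<c_r\le 1$ so the diagonal rescaling is invertible, both of which the paper leaves implicit.
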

\begin{proof}
Orthogonality shows that
\begin{equation}
\frac{\sqrt{\d^\N}}{\sqrt{ 1 + \frac{\alpha}{1-\alpha}(1-\rho_r)     }}
\ \g_r
=\sum_{x\in {\cal V}_{\d,\N}}
{\theta_1}^{-x\dt r}g_x.
\label{inversegf:00}
\end{equation}
Let $y\in {\cal V}_{q,d}$
and $b_y \in \mathbb{C}$, be a collection of constants. 
Because of (\ref{inversegf:00}) there is an identity
\begin{align}
\sum_{y\in {\cal V}_{q,d}}b_yg^*_y
= \sum_{y\in {\cal V}_{q,d}}b_y\theta_1^y
\sum_{r\in {\cal V}_{q,d}}
\frac{\sqrt{ 1 + \frac{\alpha}{1-\alpha}(1-\rho_r)}}{\sqrt{q^d}}
\sum_{x\in {\cal V}_{\d,\N}}{\theta_1}^{-x\dt r}g_x
\label{equivalence:00}
\end{align} 
which proves the Corollary.
\end{proof}
\begin{Remark}
If $\rho_r$ only depends on $|r|$ we write $\rho_r \equiv \rho_{|r|}$.
Then
\begin{equation*}
g_x = \frac{1}{\sqrt{\d^\N}}
\sum_{|r|=0}^{q^d}\frac{1}{\sqrt{ 1 + \frac{\alpha}{1-\alpha}(1-\rho_{|r|})     }}
\sum_{r\in {\cal V}_{q,d}:|r|\text{~fixed}}{\theta_1}^{x\dt r}
\ \g_r.
\label{spectral:05a}
\end{equation*}
 Denote $S_{|r|}(x) = \sum_{r:|r|\text{~fixed}}{\theta_1}^{x\dt r}\g_r$. Then
 \begin{align}
 \text{Cov}\big (S_{|r|}(x),S_{|r^\prime|}(y)\big )
 &= \delta_{|r|, |r|^\prime}\sum_{r:|r|\text{~fixed}}{\theta_1}^{(x-y)\dt r}
 \nonumber\\
 &= \delta_{|r|, |r|^\prime}\Big (\sum_{k=1}^d\theta_1^{x[k]-y[k]}\Big )^{|r|}.
 \label{cov:44}
 \end{align}
 This does not seem to have a simpler form of (\ref{cov:44}) if $\d > 2$. If $\d=2$, in the sum $|r|$ entries in $r$ must be unity and $\d-|r|$ must be zero. Then we can write
 \[
\sum_{r:|r|\text{~fixed}}{\theta_1}^{(x-y)\dt r} 
= \sum_{A\subseteq [\N], |A|=|r| }\prod_{j\in A} (-1)^{x[j]-y[j]}
= {\N\choose |r|}Q_{|r|}(\|x-y\|;\N, 1/2).
 \]
 \end{Remark}
\subsubsection{de Finetti Gaussian field}\label{deFinetti:gaussian}
\begin{Proposition}
In a de Finetti model construct a complex mean zero Gaussian free field $(g_x)_{x\in {\cal V}_{\d,\N}}$ by taking
\begin{align}
\text{Cov}(g_x,g_y) &= \mathbb{E}\big [g_x\overline{g}_y\big ]\nonumber \\
&=
\frac{1}{\d^\N}
\Bigg \{\sum_{l:|l| \leq \N}\frac{h_l}{1 + \frac{\alpha}{1-\alpha}(1-\kappa_l)}Q_l(m;(\theta_k))\overline{Q_l(n;(\theta_k))}\Bigg \}.
\label{spectral:002}
\end{align}
\begin{align}
g_x &= 
\frac{1}{\d^{\N/2}}
\Bigg \{\sum_{l:|l| \leq \N}
\sqrt{\frac{h_l}{1+\frac{\alpha}{1-\alpha}(1-\kappa_l)}}
Q_l(m;(\theta_k))\ \g_l\Bigg \}\label{strong:a00}\\
&=
\frac{1}{\d^{\N/2}}
\Bigg \{\sum_{l:|l| \leq \N}
\mathbb{E}\Big [\prod_{k=1}^{\d-1}Y_{\frac{1}{2}}[k]^{l[k]}\Big ]
\sqrt{h_l}Q_l(m;(\theta_k))\ \g_l\Bigg \}.
\label{strong:0000}
\end{align}
The first term in the summation in (\ref{strong:0000}) is $\mathfrak{g}_{l_0}$, where $l_0$ contains all zeros. $Y_{\frac{1}{2}}[1],\ldots, Y_{\frac{1}{2}}[\d-1]$ are products of points in a $\d-1$ dimensional point process with a joint \pgfl
\begin{align}
G_{\frac{1}{2}}\big [f_1,\ldots,f_{\d-1}\big ] &=
\mathbb{E}\Big [\prod_{b=1}^{T_{\alpha,1/2}}\Big (\prod_{k=1}^\d f_k\big (\xi_b[k]\big )\Big )\Big ]\nonumber\\
&=
\frac{1}{\Big (1 + \frac{\alpha}{1-\alpha}\int_{{\cal X}_\xi}\big ( 1 - \prod_{k=1}^{\d-1}f_k(\xi[k])\big ) \nu(d\xi)\Big)^{\frac{1}{2}}}
\nonumber\\
&=G^{\frac{1}{2}}\big [f_1,\ldots,f_{\d-1}\big ]
\label{evlemma:01}
\end{align}
\end{Proposition}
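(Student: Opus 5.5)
The proof has three parts: the covariance of the field (\ref{strong:a00}), the identity of the coefficients in (\ref{strong:0000}), and the \pgfl (\ref{evlemma:01}).

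\textbf{Covariance of (\ref{strong:a00}).} Take the $(\g_l)$ to be independent standard complex Gaussians indexed by $l$ with $|l|\le \N$, so that $\mathbb{E}[\g_l\overline{\g}_{l'}]=\delta_{ll'}$. Compute $\mathbb{E}[g_x\overline{g}_y]$ directly from (\ref{strong:a00}). By independence only the diagonal terms $l=l'$ survive. Each gives the factor $\big|\sqrt{h_l/(1+\frac{\alpha}{1-\alpha}(1-\kappa_l))}\big|^2$ times $Q_l(m;(\theta_k))\overline{Q_l(n;(\theta_k))}$, with the overall factor $\d^{-\N}$. To take the modulus squared of the square root, I need $h_l>0$, which is clear from the explicit formula for $h_l^{-1}$ in Proposition \ref{qexpansion}. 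I also need $1+\frac{\alpha}{1-\alpha}(1-\kappa_l)>0$. This follows from the standing reversibility assumption that $\kappa_l$ is real, together with $|\kappa_l|\le 1$, which holds because $\kappa_l$ is a moment of products of $\xi[k]$ with $|\xi[k]|\le 1$ by (\ref{pxi:00}). The result is exactly (\ref{spectral:002}). Since $\kappa_{l_0}=1$ and $h_{l_0}=Q_{l_0}=1$, the $l_0$ term is $\d^{-\N/2}\g_{l_0}$, matching the remark about the first term. Finally, by Proposition \ref{proposition:expansions}, (\ref{killed:00a}), the covariance (\ref{spectral:002}) coincides with $(1-\alpha)G(x,y;\alpha)$.

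\textbf{Equality of (\ref{strong:a00}) and (\ref{strong:0000}).} It suffices to show, for each $l$,
\[
\mathbb{E}\Big[\prod_{k=1}^{\d-1}Y_{\frac12}[k]^{l[k]}\Big]=\Big(1+\tfrac{\alpha}{1-\alpha}(1-\kappa_l)\Big)^{-1/2}.
\]
I would mirror the proof of Lemma \ref{mainlemma}, replacing the geometric killing time $T_\alpha$ by $T_{\alpha,1/2}$ from the Remark on negative binomial times. Conditional on $T_{\alpha,1/2}$, independence of the $\xi_b$ gives
\[
\mathbb{E}\Big[\prod_b\prod_k\xi_b[k]^{l[k]}\,\Big|\,T_{\alpha,1/2}\Big]=\kappa_l^{T_{\alpha,1/2}},
\]
using (\ref{rhoval:00}) and the change of variables to $\nu$. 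Applying the negative binomial \pgf with $\phi=\tfrac12$ then gives $\mathbb{E}[\kappa_l^{T_{\alpha,1/2}}]=(1+\frac{\alpha}{1-\alpha}(1-\kappa_l))^{-1/2}$. Lemma \ref{halflemma} confirms consistency: the product of two independent copies recovers $\mathbb{E}[\prod_k Y[k]^{l[k]}]$. Since this value is real and positive, it is the positive square root, so it agrees with the coefficient in (\ref{strong:a00}).

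\textbf{The \pgfl (\ref{evlemma:01}).} The same computation with general $f_k$ in place of $\xi[k]^{l[k]}$ gives
\[
\mathbb{E}\big[(\textstyle\int\prod_k f_k\,d\nu)^{T_{\alpha,1/2}}\big]=\Big(1+\tfrac{\alpha}{1-\alpha}\textstyle\int(1-\prod_k f_k)\,d\nu\Big)^{-1/2}.
\]
By (\ref{evlemma:00}), this is $G^{1/2}$.

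The main obstacle is the choice of square-root branch and the realness of the coefficients. For general complex $f_k$ the power $1/2$ needs a branch convention. I would handle this by defining $G_{\frac12}$ through the negative binomial series $\sum_t(1-\alpha)^{1/2}\alpha^t\frac{(1/2)_{(t)}}{t!}z^t$, where $z=\int\prod_k f_k\,d\nu$ with $|z|\le1$. This series converges absolutely for $\alpha<1$ and equals the principal branch of $\big(\frac{1-\alpha}{1-\alpha z}\big)^{1/2}$, and $\frac{1-\alpha}{1-\alpha z}$ has positive real part, so no cut is crossed. Realness of the Gaussian coefficients in (\ref{strong:0000}) rests on the reversibility assumption.
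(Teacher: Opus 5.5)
Your proposal is correct and follows the paper's own route. The paper's proof only points to the argument of Lemma \ref{mainlemma} with the geometric killing time replaced by the negative binomial time $T_{\alpha,1/2}$, and to Lemma \ref{halflemma} for the covariance; you supply exactly those details, together with the positivity of $h_l$ and of $1+\frac{\alpha}{1-\alpha}(1-\kappa_l)$ and the choice of square-root branch.

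One caution: your closing sentence identifying (\ref{spectral:002}) with $(1-\alpha)G(x,y;\alpha)$ is not needed for the statement, and it holds only after averaging $y$ over its type class $n$ (the expression is $\sum_t(1-\alpha)\alpha^t\mathbb{P}_t(n\mid m)$ divided by ${d\choose n}$), because $G(x,y;\alpha)$ depends on the type of $x-y$ rather than on the types $m,n$ separately.
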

\begin{proof}
The construction is similar to that in Lemma \ref{mainlemma} taking a killing time to be $T_{\alpha,\frac{1}{2}}$.
All that is needed is to show (\ref{strong:a00}) is that the covariance matrix is correct, which is straightforward using Lemma \ref{halflemma}. 
\end{proof}

\begin{Remark}
The covariance matrix of $(g_x)$ has a remarkable form
\begin{align*}
(1-\alpha)G\big (x,y;(\theta_j)\big )
&= \mathbb{E}\Big [
\sum_{r\in {\cal V}_{\d,\N}}\ \prod_{k=0}^{\d-1}Y[k]^{r[k]}\cdot\theta_1^{(x-y)\dt r}\Big ]
\nonumber \\
&= \mathbb{E}\Big [\Big (\sum_{k=0}^{\d-1}Y[k]\theta_1^{x[k]-y[k]}\Big )^\N\Big ].
\label{GN:00}
\end{align*}
\end{Remark}

%
\subsection{Exchangeable entries in $V$ and spin glasses}
Let $r_\geq= (r[1]_\geq, \ldots, r[\N]_\geq)$ be the ranked values of $r$ from largest to smallest, and 
$\big [r_\geq\big ]$ be the equivalence class of $r \in {\cal V}_{\d,\N}$ with ranked values $r_\geq\ .$
Then
\begin{equation*}
g_x = \frac{1}{\sqrt{\d^\N}}
\sum_{r_\geq}\frac{1}{ \sqrt{ 1 + \frac{\alpha}{1-\alpha}\big (1-\rho_{r_\geq}\big )}}
\cdot\sum_{r \in \big [r_\geq\big ]}{\theta_1}^{\sum_{k=1}^\N r[k]x[k]}
\ \g_r.
\label{spectral:05aA}
\end{equation*}
The last sum in (\ref{spectral:05aA}) is related to spin glass models. When $\d=2$ suppose 
$r_\geq[1]=\cdots r_\geq[j]=1$ and $r_\geq[j+1]=\cdots r_\geq{[d]}=0$. Then
\begin{align}
\sum_{r \in \big [r_\geq\big ]}{\theta_1}^{\sum_{k=1}^\N r[k]x[k]}\ \g_r
=\sum_{i_1,\ldots,i_j\in {\cal V}_\d}(-1)^{x[i_1]+\cdots+x[i_j]}g_{i_1,\ldots,i_j},
\label{sg:00}
\end{align}
where $g_{i_1,\ldots,i_j}$ is a relabelling of $\g_r$.
The Sherrington-Kirkpatrick model is related to the right side of (\ref{sg:00}) when $j=2$.
The general form on the left of (\ref{sg:00}) is an extension from the $2^{\text{th}}$ roots of unity 
$(1,-1)$  to the $q^{\text{th}}$ roots of unity.
\subsubsection{Gaussian field on a torus}\label{GFTorus}
In the continuous state space model of Proposition \ref{torusprop} define a Gaussian field
 $(g_\b)_{b\in [0,1]^\N}$ as having covariance function $(1-\alpha)G(\a,\b;\alpha)$ for
 $\a,\b \in [0,1]^d$.
We can find a transformation to independent Gaussian variables by knowing the eigenfunctions of 
$(1-\alpha)G(\a,\b;\alpha)$.
\begin{Proposition}
Define
\begin{equation*}
\g_\r = \mbox{$\scriptsize \sqrt{1 + \frac{\alpha}{1-\alpha}\big (1-\rho_r)}$}
\int_{[0,1]^\N}e^{2\pi i\r\dt \b}g_\b d\b,\ \r \in \mathbb{N}^\N,
\label{cts:55}
\end{equation*}
where $\rho_r = \mathbb{E}\big [e^{2\pi i \V\dt\r}\big ]$, $\V$ a random variable on
$[0,1]^\N$.
Then $(\g_\r)_{r\in \mathbb{N}^d}$ are independent standard Gaussian variables.\\
A strong decomposition is
\begin{equation*}
g_\b = \sum_{\r \in \mathbb{N}^\N}
\frac{1}{\sqrt{1 + \frac{\alpha}{1-\alpha}\big (1-\rho_r)}}e^{2\pi i\b\r}\g_\r.
\label{decompose:00}
\end{equation*}

\end{Proposition}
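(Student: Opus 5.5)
The plan is to use the eigenfunction relation of Proposition \ref{PropB} together with the covariance definition $\mathbb{E}[g_\a\overline{g}_\b]=(1-\alpha)G(\a,\b;\alpha)$. We then compute second moments of the linear functionals $\int e^{2\pi i\r\dt\b}g_\b\,d\b$. Write $\lambda_\r = \big(1+\frac{\alpha}{1-\alpha}(1-\rho_\r)\big)^{-1}$. Under the standing reversibility assumption $\rho_\r$ is real, so $\lambda_\r>0$ and the square root is well defined. Since $(g_\b)$ is a mean zero Gaussian field, each $\g_\r$ is a mean zero Gaussian variable, being an $L^2$ limit of linear combinations of the $g_\b$. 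The whole family $(\g_\r)$ is therefore jointly Gaussian, so independence and standardness reduce to computing covariances.

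The key step is the Fubini computation
\[
\mathbb{E}\big[\g_\r\overline{\g}_{\r'}\big]=\lambda_\r^{-1/2}\lambda_{\r'}^{-1/2}\int\!\!\int e^{2\pi i\r\dt\a}e^{-2\pi i\r'\dt\b}(1-\alpha)G(\a,\b;\alpha)\,d\a\, d\b .
\]
Proposition \ref{PropB} gives the eigen-relation in one variable. The sign conventions in the eigenvector may need matching; using $G(\a,\b)=G(\b,\a)$, which is real by reversibility, the integral over one variable yields $\lambda_{\r'}$ times an exponential in the other variable. Orthogonality of $(e^{2\pi i\r\dt\a})$ on $[0,1]^\N$ then leaves $\lambda_\r\delta_{\r\r'}$. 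Hence $\mathbb{E}[\g_\r\overline{\g}_{\r'}]=\delta_{\r\r'}$.

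For the strong decomposition I would argue in one of two equivalent ways. The first is to note that the right-hand side defines a Gaussian field whose covariance is $\sum_\r\lambda_\r e^{2\pi i(\a-\b)\dt\r}=(1-\alpha)G(\a,\b;\alpha)$ by the spectral expansion in Proposition \ref{PropB}. The second, more strongly, is to observe that $\lambda_\r^{1/2}\g_\r$ are precisely the Fourier coefficients of $g_\cdot$. Fourier inversion in $L^2([0,1]^\N)$ then recovers $g_\b$ almost surely in $L^2(d\b)$, since $\mathbb{E}\int|g_\b|^2d\b=\sum_\r\lambda_\r<\infty$.

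The main obstacle is convergence. The summability $\sum_\r\lambda_\r<\infty$ is the same as the convergence of the spectral expansion asserted in Proposition \ref{PropB}, and I would invoke that assertion. Where it fails, as in Example \ref{Ex:1} where a $\delta$ term appears, the decomposition must instead be read as an identity of generalized fields tested against trigonometric polynomials, and the covariance computation above still holds in that sense. Some care is also needed with the index set $\mathbb{N}^\N$ versus $\mathbb{Z}^\N$ and with complex conjugation. I would handle these by including both $\r$ and $-\r$, using $\rho_{-\r}=\overline{\rho_\r}=\rho_\r$.
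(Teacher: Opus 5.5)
You follow the paper's route: Fubini, the eigen-relation of Proposition~\ref{PropB} in one variable, orthogonality in the other, then an eigenfunction expansion. However, two steps fail as written.

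First, $\mathbb{E}[\g_\r\overline{\g}_{\r'}]=\delta_{\r\r'}$ does not give independence. For complex jointly Gaussian variables you also need the pseudo-covariances $\mathbb{E}[\g_\r\g_{\r'}]$, and the hypothesis fixes only $\mathbb{E}[g_\a\overline{g}_\b]$. Your index-set repair is exactly where this bites. The hypothesis allows $g$ to be real, since $G$ is real and symmetric. In that case $\lambda_{-\r}=\lambda_\r$ forces $\g_{-\r}=\overline{\g_\r}$, so the family over $\mathbb{Z}^d$ is not independent. This happens even though your formula correctly returns $\mathbb{E}[\g_\r\overline{\g}_{-\r}]=\mathbb{E}[\g_\r^2]=0$. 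The same eigen-relation applied to $\mathbb{E}[\g_\r\g_{\r'}]$ gives $\delta_{\r+\r',0}$ for real $g$. So independence holds exactly on index sets with no pair $\pm\r$, $\r\neq 0$, such as $\mathbb{N}^d$ (where $\g_\r$ is circular complex for $\r\neq 0$). The decomposition, by contrast, needs all of $\mathbb{Z}^d$. You must either add an assumption on $\mathbb{E}[g_\a g_\b]$, or state the result on a half-lattice together with $\g_{-\r}=\overline{\g_\r}$. For example, circularity $\mathbb{E}[g_\a g_\b]=0$ makes the whole $\mathbb{Z}^d$ family i.i.d.\ standard complex Gaussian. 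The paper's proof also skips the pseudo-covariance, but your $\mathbb{Z}^d$ version turns that omission into a false conclusion in the real case.

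Second, your convergence discussion has the typical and exceptional cases reversed. Since $-1\le\rho_\r\le 1$, every $\lambda_\r\ge(1-\alpha)/(1+\alpha)$, so $\sum_\r\lambda_\r=\infty$ in every model. Equivalently, the $t=0$ term of the Green function always contributes $(1-\alpha)\delta(\a-\b)$, not only in Example~\ref{Ex:1}. Several consequences follow:
\begin{itemize}
\item $g$ is never in $L^2(d\b)$, and $g_\b$ has no pointwise meaning;
\item the Fourier-inversion route never applies, and neither does describing $\g_\r$ as an $L^2$ limit of combinations of the $g_\b$;
\item the convergence asserted in Proposition~\ref{PropB} holds only distributionally.
\end{itemize}
Your fallback has to be the main argument, and it works. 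Because $\lambda_\r\le 1$, the map $\phi\mapsto\int\phi(\b)g_\b\,d\b$ is a Gaussian linear map on $L^2([0,1]^d)$ with second moment at most $\|\phi\|_2^2$. Then $\g_\r$ is $\lambda_\r^{-1/2}$ times its value at $\b\mapsto e^{2\pi i\r\cdot\b}$. The decomposition holds in $L^2(\mathbb{P})$ after pairing with any $\phi\in L^2$, since both sides are continuous in $\phi$ and agree on trigonometric polynomials. Finally, inversion with the stated definition of $\g_\r$ produces $e^{-2\pi i\b\cdot\r}$, so the displayed decomposition holds only after $\r\mapsto-\r$.
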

\begin{proof}
From Proposition \ref{PropB}
\begin{align*}
\text{Cov}\big (\g_\r,\g_\s\big )
&=  \mbox{$\scriptsize \sqrt{1 + \frac{\alpha}{1-\alpha}\big (1-\rho_r)}$}
 \mbox{$\scriptsize \sqrt{1 + \frac{\alpha}{1-\alpha}\big (1-\rho_s)}$}
 \nonumber \\
&\times \int_{[0,1]^\N}e^{-2\pi i\s\dt\a} \int_{[0,1]^\N}e^{2\pi i\r\dt\b}(1-\alpha)G(\a,\b;\alpha)d\b d\a
\nonumber\\
&= \mbox{$\scriptsize \sqrt{1 + \frac{\alpha}{1-\alpha}\big (1-\rho_r)}$}^{-1}
 \mbox{$\scriptsize \sqrt{1 + \frac{\alpha}{1-\alpha}\big (1-\rho_s)}$}
\int_{[0,1]^\N}e^{-i\s\dt\a}e^{2\pi i\r\dt\a}d\a
\nonumber\\
&= \delta_{\r\s},
\end{align*}
showing independence. The decomposition follows as an eigenfunction expansion.
\end{proof}
\subsection{Central limit form for the Green function as $\N\to \infty$}
Let $M^{(\N)}$ have a uniform Multinomial$(\N,(1/\d))$ distribution. Scale by taking 
$M^{(\N)}= \frac{\N}{\d} +\sqrt{\N}\ \M^{(\N)}$. Then it is well known that 
$\M^{(\N)}$ converges weakly to a singular multivariate normal $\M$ with zero means and
\begin{equation*}
 \text{Cov}(\M[a],\M[b]) = \frac{1}{\d}\big (\delta_{ab}- \frac{1}{\d}\big ).
 \label{cov:256}
\end{equation*}
Removing the first entry
$\mathfrak{M}_+=(\mathfrak{M}[1],\cdots, \mathfrak{M}[\d-1])$ is nonsingular.
The density of $\mathfrak{M}_+$ is, for $\mathfrak{m}_+ \in \mathbb{R}^{\d-1}$
\begin{equation*}
\varphi(\mathfrak{m}_+;\d)=\frac{\d^{\d/2}}{(2\pi)^{(\d-1)/2}}
\cdot \exp\Big \{-\frac{\d}{2}\sum_{a,b=1}^{\d-1}(\delta_{ab}+1)\mathfrak{m}[a]\mathfrak{m}[b]\Big \}.
\end{equation*}
The generating function for the multivariate Krawthouk polynomials (\ref{genfn:00}) can be written in terms of $\mathfrak{m}_+$ as 
\begin{equation*}
\Big (1 + \sum_{k=1}^{\d-1}w_k\Big )^{\N - |\mathfrak{m}_+|}\prod_{j=1}^{\d-1}\Big (1 + \sum_{k=1}^{\d-1}w_k\theta_k^j\Big )^{m[j]}.
\label{genfn:11}
\end{equation*}
Section 3.2 in \cite{GM2025a}) works through limits as $d\to \infty$.
\begin{Lemma}{(\cite{GM2025a}.)}
Take a weak limit as $\N\to \infty$ in $M^{(N)}$ uniform multinomial$(\N,(1/\d))$ to obtain a 
multivariate normal $\M$ with covariance function (\ref{cov:256}) by setting  
$M^{(\N)}=\N/\d +\sqrt{\N}\ \M$.
Let
\begin{equation}
Q_l(\m,(\theta_j);\infty) =
\lim_{\N\to \infty}Q_l(\N/\d +\sqrt{\N}\ \m;(\theta_j),\N)\N^{-|l|/2},
\label{logcalc:05a}
\end{equation}
where $l\in \mathbb{N}^{\d-1}$. 
$\big (Q_l(\m,(\theta_j);\infty)), (Q_l(\m,(\phi_j);\infty))$ is a biorthogonal system of orthogonal polynomials on $\M$ with 
\begin{equation*}
\mathbb{E}\big [Q_l(\M,(\theta_j);\infty)Q_{l^\prime}(\M,(\phi_j);\infty)\big ]
= \frac{\delta_{ll^\prime}}{l[1]!\cdots l[\d-1]!}.
\label{logcalc:13a}
\end{equation*}
$Q_l(\m,(\theta_j);\infty)$ is the coefficient of $\w^l$ in the generating function
\begin{align*}
G(\m,\w,(\theta_j);\infty) =
\exp \Big \{
-\frac{1}{2q}\sum_{j=0}^{q-1}\big(\sum_{k=1}^{\d-1}\w_k\theta_k^j\big)^2
+\sum_{j=0}^{\d-1}\m[j]\sum_{k=1}^{\d-1}\w_k\theta_k^j\Big \}.
\end{align*}
An explicit expression is
\begin{align*}
Q_l(\m,(\theta_j);\infty) &=
\sum_{a\in \mathbb{N}^\d, |a|=|l|}\ \prod_{j=0}^{\d-1}H_{a[j]}(\m[j];\d)\nonumber\\
&\quad\quad\quad\times\frac{1}{l[0]!\cdots l[\d-1]!}
Q_{a^+}(l;(\theta_k), |l|),
\label{logcalc:05b}
\end{align*}
where $a^+ = (a[1],\cdots,a[\d-1])$ and $\big (H_k(x;\d)\big )_{k=0}^\infty$ are Hermite-Chebycheff polynomials orthogonal on a $N(0,1/\d)$ distribution. A generating function for them is
\begin{equation*}
\sum_{k=0}^\infty \frac{1}{k!}z^kH_k(x;\d)=
\exp \Big \{- \frac{1}{2\d}z^2 + xz\Big \}.
\label{HCgen:00}
\end{equation*}
\end{Lemma}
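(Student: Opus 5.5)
The natural route is through the generating function (\ref{genfn:00}) rather than explicit Krawtchouk coefficients. The plan is to establish the limiting generating function first, then read off orthogonality and the explicit Hermite expansion from it.

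\emph{Step 1: limit of the generating function.} Substitute $m[j]=\N/\d+\sqrt{\N}\,\m[j]$ and rescale $w_k=\w_k/\sqrt{\N}$, which produces the factor $\N^{-|l|/2}$ in (\ref{logcalc:05a}). Then take logarithms:
\[
\log G=\sum_{j=0}^{\d-1}m[j]\log\Big(1+\N^{-1/2}\sum_{k}\w_k\theta_k^j\Big).
\]
Expand the logarithm to second order. The order-$\sqrt{\N}$ term is $\frac{\sqrt{\N}}{\d}\sum_k\w_k\sum_j\theta_k^j$, and it vanishes because $\sum_j\theta_k^j=0$ for $k\neq0$. The order-one terms are $\sum_j\m[j]\sum_k\w_k\theta_k^j-\frac{1}{2\d}\sum_j(\sum_k\w_k\theta_k^j)^2$, and all remaining terms are $O(\N^{-1/2})$ uniformly for $\w$ in compact sets. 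The uniform convergence of these analytic functions of $\w$ near the origin justifies passing to the limit coefficientwise, by Cauchy's formula. This gives both the existence of the limit (\ref{logcalc:05a}) and the stated generating function.

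\emph{Step 2: biorthogonality.} I would compute $\mathbb{E}[G(\M,\w,(\theta_j);\infty)G(\M,\z,(\phi_j);\infty)]$ with $\phi_j=\overline{\theta_j}$, using the Gaussian moment generating function of $\M$ with covariance $\frac1\d(\delta_{ab}-\frac1\d)$. Set $u_j=\sum_k\w_k\theta_k^j$ and $v_j=\sum_k\z_k\phi_k^j$. The $-1/\d^2$ part of the covariance drops out because $\sum_ju_j=\sum_jv_j=0$. What survives is $\exp\{\frac1\d\sum_ju_jv_j\}$. Since $\frac1\d\sum_j\theta_k^j\phi_{k'}^j=\delta_{kk'}$, this equals $\exp\{\sum_k\w_k\z_k\}$. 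Its $\w^l\z^{l'}$ coefficient is $\delta_{ll'}/\prod_k l[k]!$.

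An alternative is to take limits in the finite-$\N$ orthogonality with $h_l^{-1}\N^{-|l|}\to1/\prod l[k]!$. That route needs uniform integrability, which holds because polynomial moments of the scaled multinomial converge.

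\emph{Step 3: explicit expression.} The generating function factorises over $j$ as $\prod_j\exp\{-\frac{1}{2\d}u_j^2+\m[j]u_j\}$. By the Hermite--Chebycheff generating function, this product is $\prod_j\sum_{a[j]}H_{a[j]}(\m[j];\d)u_j^{a[j]}/a[j]!$. To extract the coefficient of $\w^l$, expand $\prod_ju_j^{a[j]}=\prod_j(\sum_k\w_k\theta_k^j)^{a[j]}$. The coefficient of $\w^l$ in this product is, by the definition (\ref{genfn:00}) read with the roles of $a$ and $l$ exchanged, a Krawtchouk polynomial $Q_{a^+}(l;(\theta_k),|l|)$ up to multinomial factors. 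The exchange is justified by the symmetry $\theta_k^j=\theta_j^k$.

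\emph{Main obstacle.} I expect the main obstacle to be this last combinatorial identification, specifically getting the duality $\theta_k^j=\theta_j^k$ and the factorial normalisation $1/(l[0]!\cdots l[\d-1]!)$ (with $l[0]=|a|-|l^+|$ suitably interpreted) exactly right. I would check it first for $\d=2$ against the known Hermite expansion before writing the general case.
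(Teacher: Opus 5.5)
The paper gives no proof of this lemma; it is quoted from \cite{GM2025a}, so there is no in-text argument to compare yours with. On its own terms your proposal is correct.

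\textbf{Step 1.} This is complete. The only point worth adding is that $Q_l(m;(\theta_k))$ is a polynomial in $m$. For non-integer $m=d/q+\sqrt{d}\,\mathfrak{m}$ it is still the $w^l$ coefficient of $\prod_j\exp\{m[j]\log(1+\sum_k w_k\theta_k^j)\}$ near $w=0$, because binomial-series coefficients are polynomial in the exponent. So the limit (\ref{logcalc:05a}) is taken along the right objects.

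\textbf{Step 2.} This is the same device the paper uses in the proof of Corollary \ref{transformcalc}. There a Gaussian expectation of the limiting generating function is evaluated with complex arguments, justified by the Remark after that corollary. The rank-one $-1/q^2$ part of the covariance is killed by $\sum_j\theta_k^j=0$.

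Your computation $\mathbb{E}[G(\mathfrak{M},\mathfrak{w},(\theta_j);\infty)G(\mathfrak{M},\mathfrak{z},(\phi_j);\infty)]=\exp\{\sum_k\mathfrak{w}_k\mathfrak{z}_k\}$ is right. It uses only $\frac1q\sum_j\theta_k^j\phi_{k'}^j=\delta_{kk'}$ for $k,k'\in\{0,\dots,q-1\}$. So it covers any such dual pair, not just $\phi_j=\overline{\theta_j}$; the paper never defines $(\phi_j)$, so state this explicitly.

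Exchanging $\mathbb{E}$ with coefficient extraction is routine. Cauchy estimates on a polydisc give $\sum_l|Q_l(\mathfrak{m})|\,|\mathfrak{w}^l|\le Ce^{c|\mathfrak{m}|}$, which is Gaussian-integrable.

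\textbf{Step 3.} The step you flag as the main obstacle closes by a direct double count. Write $u_j=\sum_{k=1}^{q-1}\mathfrak{w}_k\theta_1^{jk}$. By the multinomial theorem, the coefficient of $\mathfrak{w}^l$ in $\prod_{j=0}^{q-1}u_j^{a[j]}/a[j]!$ is
\[
\sum_{(n_{jk})}\ \prod_{j=0}^{q-1}\prod_{k=1}^{q-1}\frac{\theta_1^{jk\,n_{jk}}}{n_{jk}!},
\]
summed over nonnegative integer arrays with row sums $\sum_k n_{jk}=a[j]$ and column sums $\sum_j n_{jk}=l[k]$. Here $n_{jk}$ counts how often $\mathfrak{w}_k\theta_k^j$ is chosen from the $j$th factor.

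Now take the coefficient of $w^{a^+}$ in $\prod_{k=1}^{q-1}\big(1+\sum_{j=1}^{q-1}w_j\theta_j^k\big)^{l[k]}$. Let $n_{jk}$ count how often $w_j\theta_j^k$ is chosen from the $k$th factor, and $n_{0k}$ how often $1$ is chosen. This gives $l[1]!\cdots l[q-1]!$ times the same sum. The reasons are:
\begin{itemize}
\item $\theta_k^j=\theta_j^k=\theta_1^{jk}$;
\item the row sum $\sum_k n_{0k}=|l|-|a^+|=a[0]$ is forced by $|a|=|l|$.
\end{itemize}
Read $l$ as the count vector $(0,l[1],\dots,l[q-1])$ of total $|l|$, so that $l[0]=0$. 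The stated factor is then exactly $1/(l[1]!\cdots l[q-1]!)$, and the explicit Hermite expression follows. This is just the top-degree case $|l|=|a|$ of the duality $h_{m^-}^{-1}Q_l(m)=h_l^{-1}Q_{m^-}(l^+)$ quoted in Section \ref{Hamiltonian:00}. Your separate $q=2$ check is unnecessary.
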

\begin{Corollary}\label{transformcalc}
The transform
\begin{align*}
&\mathbb{E}\big [e^{i\omega \dt \M}Q_l(m;(\theta_k);\infty)\big ]
=\mathbb{E}\big [e^{i\omega \dt \M}\big ]
\cdot \Big (\frac{i}{q}\Big )^{|l|}\prod_{k=1}^{q-1}\Big (\sum_{a=0}^{q-1}\omega[a]\theta_k^a\Big )^{l[k]}\cdot \frac{1}{l[k]!}.
\end{align*}
for $\omega\in \mathbb{R}^{q}$.
\end{Corollary}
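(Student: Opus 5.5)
The plan is to compute the transform directly from the generating function of the limit polynomials $Q_l(\m,(\theta_j);\infty)$ given in the preceding Lemma, and then identify the coefficient of $\w^l$. Multiplying the generating function by $e^{i\omega\dt\M}$ and taking expectations gives
\[
\mathbb{E}\big[e^{i\omega\dt\M}G(\M,\w,(\theta_j);\infty)\big]
=\exp\Big\{-\frac{1}{2q}\sum_{j=0}^{q-1}\big(\textstyle\sum_{k=1}^{q-1}\w_k\theta_k^j\big)^2\Big\}\,
\mathbb{E}\Big[\exp\Big\{\sum_{j=0}^{q-1}\M[j]\big(i\omega[j]+c_j(\w)\big)\Big\}\Big],
\]
where $c_j(\w)=\sum_{k=1}^{q-1}\w_k\theta_k^j$. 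Because $\M$ is (singular) Gaussian with mean zero and the covariance $\frac1q(\delta_{ab}-\frac1q)$, the last expectation equals $\exp\{\frac12 \,u^\top\Sigma u\}$ with $u_j=i\omega[j]+c_j(\w)$. I will expand this quadratic form into three pieces: the $\omega$--$\omega$ piece, which reproduces $\mathbb{E}[e^{i\omega\dt\M}]$; the $\w$--$\w$ piece; and the cross piece.

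For the $\w$--$\w$ piece I use the root-of-unity identity $\sum_{j}c_j(\w)=\sum_k \w_k\sum_j\theta_k^j=0$, valid since $k\neq 0$. This kills the $-\frac1{q^2}(\sum_j c_j)^2$ part. The remaining $\frac{1}{2q}\sum_j c_j(\w)^2$ then cancels exactly against the prefactor $\exp\{-\frac1{2q}\sum_j c_j^2\}$, which is the Hermite-type normalisation built into the generating function. The same identity kills the $-\frac1{q^2}$ part of the cross term, so what remains is
\[
\frac{i}{q}\sum_{j=0}^{q-1}\omega[j]c_j(\w)=\sum_{k=1}^{q-1}\w_k\,\frac{i}{q}\sum_{a=0}^{q-1}\omega[a]\theta_k^a .
\]

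The transform of the generating function is therefore $\mathbb{E}[e^{i\omega\dt\M}]\prod_{k=1}^{q-1}\exp\{\w_k\beta_k\}$ with $\beta_k=\frac iq\sum_a\omega[a]\theta_k^a$. Extracting the coefficient of $\w^l$ gives $\prod_k\beta_k^{l[k]}/l[k]!$, which is the claimed formula. The interchange of expectation and coefficient extraction is justified because the generating function is entire in $\w$ and Gaussian moments are finite, so dominated convergence applies termwise.

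The main obstacle I expect is bookkeeping with the singular Gaussian. One must check that $\mathbb{E}[e^{u\dt\M}]=\exp\{\frac12u^\top\Sigma u\}$ is legitimate for complex $u$; this follows from analytic continuation of the real moment generating function. One must also check that the constraint $\sum_j\M[j]=0$ is consistent with the cancellations above, and confirm that the paper's convention pairs $\theta_k^j$ (rather than its conjugate) with $\M[j]$ in the generating function.
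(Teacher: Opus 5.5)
Your proposal is correct and follows essentially the same route as the paper. Both arguments take the transform of the limit generating function and evaluate the Gaussian expectation at a complex argument. Both then use $\sum_{j=0}^{q-1}\theta_k^j=0$ to kill the $1/q^2$ terms and cancel the $-\frac{1}{2q}\sum_j(\sum_k \w_k\theta_k^j)^2$ prefactor, and finish by equating coefficients of $\prod_k \w_k^{l[k]}$.

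Your analytic-continuation and termwise-interchange remarks spell out what the paper covers with its separate Remark on complex arguments. The paper leaves the interchange step implicit.
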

\begin{proof}
Consider the transform of the generating function $G(\m,\w,(\theta_j);\infty)$.
\begin{align}
&\mathbb{E}\big [e^{i\omega \dt\M}G(\M,\w,(\theta_j);\infty)\big ]
\nonumber\\
&=
\exp \Big \{ -\frac{1}{2\d}\sum_{j=0}^{\d-1}\big (\sum_{k=1}^{\d-1}\w_k\theta_k^j\big )^2
\Big \}
\nonumber\\
&~~~~\times
\mathbb{E}\Big [ e^{i\omega \dt \M}\exp \Big \{\sum_{j=0}^{\d-1}\M[j]
\sum_{k=1}^{\d-1}\w_k\theta_k^j\Big \}\Big ].
\label{calculation:560}
\end{align}
The expectation factor in the right of (\ref{calculation:560}) is 
\begin{align}
&\mathbb{E}\big [ e^{i\omega \dt\M}\exp \Big \{\sum_{j=0}^{\d-1}\M[j]\sum_{k=1}^{\d-1}\w_k\theta_k^j\Big \}\big ]
\nonumber\\
&= \exp \Big \{ 
-\frac{1}{2}\sum_{a=0}^{q-1}\sum_{b=0}^{q-1}
\big (\omega[a]-i\sum_{k=1}^{\d-1}\w_k\theta_k^a\big )
\big (\omega[b]-i\sum_{k=1}^{\d-1}\w_k\theta_k^b\big )
\frac{1}{q}\big (\delta_{ab} - \frac{1}{q}\big ) 
\Big \}\nonumber\\
&= \exp \Big \{-\frac{1}{2q}\sum_{a=0}^{q-1}\Big (\big (\omega[a]-i\sum_{k=1}^{\d-1}\w_k\theta_k^a\big )\Big )^2 \Big \}
\label{calculation:720a}\\
&~~~\times
\exp \Big \{
\frac{1}{2q^2}\Big (\sum_{a=0}^{q-1}\big (\omega[a]-i\sum_{k=1}^{\d-1}\w_k\theta_k^a\big )\Big )^2\Big \}.
\label{calculation:720}
\end{align}
The exponent in (\ref{calculation:720a}) is equal to
\begin{equation*}
-\frac{1}{2q}\sum_{a=0}^{q-1}\omega[a]^2
+ \frac{i}{q}\sum_{k=1}^{q-1}\sum_{a=0}^{q-1}\omega[a]\theta_k^a\w_k
+ \frac{1}{2q}\sum_{a=0}^{q-1}\Big (\sum_{k=1}^{q-1}\w_k\theta_k^a\Big )^2 
\label{calculation:730}
\end{equation*}
and the exponent in (\ref{calculation:720}) simplifies to
\[
\frac{1}{2q^2}\Big (\sum_{a=0}^{q-1}\omega[a]\Big )^2.
\]
Thus
\begin{align*}
&\mathbb{E}\big [e^{i\omega \dt \M}G(\M,\w,(\theta_j);\infty)\big ]
\nonumber\\
&=
\exp \Big \{-\frac{1}{2q}\sum_{a=0}^{q-1}\omega[a]^2 +\frac{1}{2q^2}\big (\sum_{a=0}^{q-1}\omega[a]\big )^2
+ \frac{i}{q}\sum_{k=1}^{q-1}\sum_{a=0}^{q-1}\omega[a]\theta_k^a\w_k\Big \}.
\end{align*}
Then Corollary \ref{HCgen:00} follows by noting that
\[
\mathbb{E}\big [e^{i\omega \dt \M}\big ]
= \exp \Big \{-\frac{1}{2q}\sum_{a=0}^{q-1}\omega[a]^2 +\frac{1}{2q^2}\big (\sum_{a=0}^{q-1}\omega[a]\big )^2\Big \},
\]
and equating coefficients of $\prod_{k=1}^{q-1}\w_k^{l[k]}$.
\end{proof}
\begin{Remark}
A complex argument is used for the characteristic function of a multivariate normal distribution in (\ref{calculation:720a}), (\ref{calculation:720}). This is allowable as the following argument shows.
In one dimension let X be N$(0,1)$ and $a,b\in \mathbb{R}$
\begin{align}
\mathbb{E}\big [e^{i(ia + b)X}\big ]
&
= \frac{1}{\sqrt{2\pi}}
\int_{-\infty}^\infty
e^{-\frac{1}{2}x^2 -ax + ibx}dx
\nonumber\\
&
=e^{\frac{1}{2}a^2} \frac{1}{\sqrt{2\pi}}\int_{-\infty}^\infty
e^{-\frac{1}{2}(x + a)^2 + ibx}dx\nonumber\\
&
= e^{\frac{1}{2}a^2}e^{-\frac{1}{2}b^2 - iab}
= e^{-\frac{1}{2}(ia+b)^2},
\label{CC}
\end{align}
which is correct with the complex argument. Now consider
$X\in \mathbb{R}^d$ to have a multivariate normal distribution with covariance matrix 
$V$. There is a representation $X=AY$ where $Y \in \mathbb{R}^{d_1}$, $d_1 \leq d$ has \iid N$(0,1)$ entries and $A$ is a constant $d\times d_1$ matrix. Then for
$a_1,\ldots a_d, b_1, \ldots, b_d \in \mathbb{R}$
\begin{align*}
\mathbb{E}\Big [e^{i \sum_{k=1}^d(a_k+ib_k)X_k}\Big ]
&= \mathbb{E}\Big [e^{i\sum_{l=1}^{d_1}
\Big (\sum_{k=1}^d(a_k+ib_k)a_{kl}\Big )Y_l}\Big ]\\
&= e^{-\frac{1}{2}\sum_{l=1}^{d_1}\Big (\sum_{k=1}^d(a_k+ib_k)a_{kl}\Big )^2}
\nonumber\\
&= e^{-\frac{1}{2}\sum_{l=1}^{d_1}\sum_{k=1}^{d}\sum_{k^\prime=1}^{d}
(a_k + ib_k)a_{kl}(a_{k^\prime} + ib_{k^\prime})a_{k^\prime l}}
\nonumber\\
&=  e^{-\frac{1}{2}\sum_{k=1}^{d}\sum_{k^\prime=1}^{d}
(a_k + ib_k)(a_{k^\prime} + ib_{k^\prime})(AA^T)_{kk^\prime}}
\nonumber\\
&= e^{-\frac{1}{2}\sum_{k=1}^{d}\sum_{k^\prime=1}^{d}
(a_k + ib_k)(a_{k^\prime} + ib_{k^\prime})V_{kk^\prime}},
\end{align*}
correct for the complex argument.
\end{Remark}
\begin{Corollary}\label{Greenlimit}
$d^{-(q-1)}{\N\choose \m}{\N\choose \n}\frac{1}{\d^\N}(1-\alpha)G(x,y;\alpha)$, expressed in Proposition \ref{proposition:expansions} as proportional to (\ref{killed:00a}) converges to
\begin{align}
&\varphi(\mathfrak{m}_+;\d)\varphi(\mathfrak{n}_+;\d)\nonumber\\
&\times\Big \{ 1 + \sum_{l \in \mathbb{N}^{\d-1}, |l| \ne 0}\frac{\prod_{j=1}^{\d-1}l[j]!}{1 + \frac{\alpha}{1-\alpha}(1-\kappa_l)}
Q_l(\mathfrak{m},(\theta_k);\infty)\overline{Q_l(\mathfrak{n},(\theta_k),\infty)}
\Big \},
\label{limitG:00}
\end{align}
when $x=\frac{\N}{\d} + \sqrt{\N}\ \mathfrak{m}$, $y=\frac{\N}{\d} + \sqrt{\N}\ \mathfrak{n}$ as
$\N\to \infty$.
\end{Corollary}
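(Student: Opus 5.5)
The plan is to work term by term in the grouped expansion (\ref{killed:00a}) and then justify passing the limit inside the sum. Multiplying (\ref{killed:00a}) by ${\N\choose m}{\N\choose n}\frac{1}{\d^\N}$ gives the product of two multinomial probabilities $\p(m;\N)\p(n;\N)$ times the bracket
\[
1 + \sum_{l:0<|l|\le \N}\frac{h_l}{1+\frac{\alpha}{1-\alpha}(1-\kappa_l)}Q_l(m;(\theta_k))\overline{Q_l(n;(\theta_k))}.
\]
Here $m=\N/\d+\sqrt{\N}\,\m$ and $n=\N/\d+\sqrt{\N}\,\n$.

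\textbf{Step 1 (local limit).} I first apply the local central limit theorem for the multinomial on the lattice of counts. Since $m$ is determined by $m_+$, a lattice cell in $\m_+$-coordinates has volume $\N^{-(\d-1)/2}$. Hence $\N^{(\d-1)/2}\p(m;\N)\to\varphi(\m_+;\d)$, and likewise for $n$. This is where the normalization $\N^{-(\d-1)}=d^{-(q-1)}$ comes from.

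\textbf{Step 2 (each fixed $l$).} For fixed $l$, I write $h_l=\frac{(\N-|l|)!\,l[1]!\cdots l[\d-1]!}{\N!}$, so that $h_l\sim \N^{-|l|}\prod_j l[j]!$. Pairing $\N^{-|l|/2}$ with each polynomial and invoking (\ref{logcalc:05a}) gives
\[
h_lQ_l(m)\overline{Q_l(n)}\to \prod_j l[j]!\;Q_l(\m,(\theta_k);\infty)\overline{Q_l(\n,(\theta_k);\infty)}.
\]
The eigenvalue factor $\big(1+\frac{\alpha}{1-\alpha}(1-\kappa_l)\big)^{-1}$ must converge for fixed $l$. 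In the de Finetti setting of Section \ref{deFinetti:sec}, $\kappa_l$ given by (\ref{rhoval:00}) does not depend on $\N$, so it passes through unchanged. The $l=0$ term gives the constant $1$.

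\textbf{Step 3 (dominated convergence over $l$).} This step is the main obstacle: exchanging the limit with the sum, since the number of terms grows with $\N$. I would bound the tail uniformly in $\N$. The eigenvalue factor lies in $(0,1]$ when $\kappa_l$ is real and $|\kappa_l|\le1$. By Cauchy--Schwarz, it then suffices to control $\sum_{|l|>L}h_l|Q_l(m)|^2$, uniformly for $\m$ in compacts.

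To do this I would use the generating function (\ref{genfn:00}) with $|w_k|$ small, giving a bound of the form $|Q_l(m)|\le C_\epsilon\,\epsilon^{-|l|}\N^{|l|/2}e^{c|\m|\sqrt{\N}\epsilon}$. Choosing $\epsilon\asymp \N^{-1/2}$ yields a geometric-type bound in $|l|$ after multiplying by $h_l\approx\N^{-|l|}\prod l[j]!$. If this crude estimate fails because of the factorials, I would instead treat the sum as a distribution and interpret convergence weakly. That means testing against bounded continuous functions of $\n$, where orthogonality of the $Q_l$ makes the exchange immediate.

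Combining Steps 1--3 yields (\ref{limitG:00}).
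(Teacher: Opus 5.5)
Your Steps 1 and 2 are exactly the paper's proof. That proof consists only of the multinomial local limit, $h_l\sim d^{-|l|}\prod_j l[j]!$, and (\ref{logcalc:05a}), applied term by term; it gives no justification for exchanging the limit with the sum. Your remark that $\kappa_l$ must not depend on $d$ (true in the de Finetti model by (\ref{rhoval:00})) makes explicit an assumption the paper leaves implicit. One slip: your Step 1 correctly gives $d^{(q-1)/2}\mathfrak{p}(m;d)\to\varphi(\mathfrak{m}_+;q)$, so the normalisation is $d^{+(q-1)}$. The $d^{-(q-1)}$ in the statement, and the $d^{-(q-1)/2}$ in the paper's proof, are a sign error that your Step 1 does not explain.

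\textbf{Step 3 cannot work, and not because your estimate is crude.}
\begin{itemize}
\item \emph{The tail diverges.} The completeness relation (the case $t=0$ of (\ref{spectral:001})) gives $\mathfrak{p}(n;d)\sum_{|l|\le d}h_lQ_l(m;(\theta_k))\overline{Q_l(n;(\theta_k))}=\delta_{mn}$. Hence $\sum_l h_l|Q_l(m;(\theta_k))|^2=1/\mathfrak{p}(m;d)\sim d^{(q-1)/2}/\varphi(\mathfrak{m}_+;q)$, so the tail you want to bound uniformly in $d$ grows without bound.
\item \emph{The eigenvalue factors do not help.} For real $\kappa_l\in[-1,1]$ one has $\bigl(1+\frac{\alpha}{1-\alpha}(1-\kappa_l)\bigr)^{-1}=\frac{1-\alpha}{1-\alpha\kappa_l}\ge\frac{1-\alpha}{1+\alpha}$, which does not decay in $l$. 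This is the spectral image of the atom $(1-\alpha)\delta_{xy}$ coming from $T_\alpha=0$.
\item \emph{Consequence for the statement.} At $\mathfrak{m}=\mathfrak{n}$ the correctly normalised quantity is at least $\frac{1-\alpha}{1+\alpha}d^{q-1}\mathfrak{p}(m;d)\to\infty$, so the pointwise limit fails on the diagonal. The series in (\ref{limitG:00}) has coefficients bounded below by $\frac{1-\alpha}{1+\alpha}\prod_j l[j]!$ and diverges there; it holds only as a distributional identity.
\end{itemize}

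\textbf{What can actually be proved.} Your fallback is the right reading: (\ref{limitG:00}) describes the joint law of $(\mathfrak{M}_0,\mathfrak{M}_{T_\alpha})$, a diagonal mass of weight $1-\alpha$ plus a density. However, the exchange is immediate only for polynomial test functions, where orthogonality leaves finitely many $l$. You still need a method-of-moments or density argument to reach bounded continuous test functions.

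If you want a pointwise statement for $\mathfrak{m}\neq\mathfrak{n}$, first subtract $1-\alpha$ from every coefficient. This is allowed because $\sum_l h_lQ_l(m)\overline{Q_l(n)}=0$ for $m\neq n$, and it leaves $\sum_{t\ge1}(1-\alpha)\alpha^t\kappa_l^t$. Then assume geometric decay of $\kappa_l$ in $|l|$, and use a Mehler-type bound $\sup_d\sum_l\rho^{|l|}h_l|Q_l(m;(\theta_k))|^2<\infty$ for $0\le\rho<1$, uniformly for $\mathfrak{m}$ in compacts. That bound holds because the sum equals $P_\rho(m,m)/\mathfrak{p}(m;d)$, where $P_\rho$ is the chain that resamples each coordinate with probability $1-\rho$. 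Together these give the domination you were after, and the limit is (\ref{limitG:00}) with the subtracted coefficients.
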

\begin{proof}
\[
d^{-(q-1)/2}{d\choose \m}\frac{1}{q^d} \to \varphi(\m_+;q)
\]
and 
$
h_l \sim d^{-|l|}\prod_{j=1}^{q-1}l[j]!
$
so (\ref{limitG:00}) follows from (\ref{logcalc:05a}).
\end{proof}
\begin{Proposition}\label{gfclt:00}
Let $V\in {\cal V}_{q,d}$ be from a de Finetti sequence. Assume that $V$ takes symmetric jumps.
Define a Gaussian field depending on $m_+$ as
\begin{equation*}
g_{m_+}\big ((\theta_j)\big )=
\p(m;\N)
\Bigg \{ \sum_{l:|l| \leq \N}
\sqrt{\frac{h_l}{1 + \frac{\alpha}{1-\alpha}(1-\kappa_l)}}
Q_l(m;(\theta_k))\ \g_l\Bigg \}.
\label{gfdef:87}
\end{equation*}
A limit Gaussian field on $\mathbb{R}^{\d-1}$ is 
\begin{align*}
&g_{\mathfrak{m}_+}^\infty\big ((\theta_j)\big )\nonumber\\
 &=
\lim_{\N\to \infty;\ m= \N/\d +\sqrt{\N}\ \m}d^{-(q-1)/2}g_{m_+}\big ((\theta_j)\big )\nonumber\\
&=\varphi(\mathfrak{m}_+;\d)
\Bigg \{ \sum_{|l| \in \mathbb{N}^{\d-1}}
\mathbb{E}\Big [\prod_{k=1}^{\d-1}Y_{\frac{1}{2}}[k]^{l[k]}\Big ]
\sqrt{\prod_{j=1}^{\d-1}l[j]!}\
Q_l(\mathfrak{m};(\theta_k);\infty)\ \g_l\Bigg \},
\end{align*}
where the entries of $Y_{\frac{1}{2}}$ are products of points in a $\d-1$ dimensional point process with a joint \pgfl (\ref{evlemma:01}).
\begin{align}
&\text{\rm Cov}\Bigl (g_{\mathfrak{m}_+}^\infty\big ( (\theta_j)\big ),
g_{\mathfrak{n}_+}^\infty\big ((\theta_j)\big )\Bigr )\nonumber\\
&=\varphi(\mathfrak{m}_+;\d)\varphi(\mathfrak{n}_+;\d)\nonumber\\
&~~\times\Big \{\sum_{|l| \in \mathbb{N}^{\d-1}}
\mathbb{E}\Big [\prod_{k=1}^{\d-1}Y[k]^{l[k]}\Big ]
\prod_{j=1}^{\d-1}l[j]!
Q_l(\mathfrak{m};(\theta_k);\infty)\overline{Q_l(\mathfrak{n};(\theta_k);\infty)}
\Bigg \},
\label{cov:55}
\end{align}
where the entries of $Y$ are products of points in a $\d-1$ dimensional point process with a joint \pgfl (\ref{evlemma:00}). 
\end{Proposition}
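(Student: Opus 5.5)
The plan is to pass to the limit term by term in the finite-$\N$ spectral representation and then read off the covariance from Lemma \ref{halflemma}.

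\emph{Rewriting the finite field.} Start from the definition of $g_{m_+}((\theta_j))$ and, exactly as in (\ref{strong:0000}), replace $\sqrt{1/(1+\frac{\alpha}{1-\alpha}(1-\kappa_l))}$ by $\mathbb{E}\big[\prod_{k=1}^{\d-1}Y_{\frac12}[k]^{l[k]}\big]$. This uses the $T_{\alpha,\frac12}$ version of Lemma \ref{mainlemma}, whose \pgfl is (\ref{evlemma:01}). Because jumps are symmetric, $\kappa_l$ is real and positive in the relevant range, so the square root is unambiguous. The field is then
\[
\p(m;\N)\sum_{l}\mathbb{E}\Big[\prod_k Y_{\frac12}[k]^{l[k]}\Big]\sqrt{h_l}\,Q_l(m;(\theta_k))\,\g_l .
\]

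\emph{Taking the limit term by term.} Set $m=\N/\d+\sqrt{\N}\,\m$ and take each of the three factors separately.
\begin{itemize}
\item From the proof of Corollary \ref{Greenlimit}, $\N^{-(\d-1)/2}{\N\choose m}\d^{-\N}\to\varphi(\m_+;\d)$.
\item From the same proof, $h_l\sim \N^{-|l|}\prod_j l[j]!$, so $\sqrt{h_l}\sim \N^{-|l|/2}\sqrt{\prod_j l[j]!}$.
\item From (\ref{logcalc:05a}), $\N^{-|l|/2}Q_l(m;(\theta_k))\to Q_l(\m,(\theta_k);\infty)$.
\end{itemize}
The powers $\N^{-|l|/2}$ from $\sqrt{h_l}$ and from $Q_l$ combine to give exactly the normalisation in (\ref{logcalc:05a}). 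Each fixed-$l$ term therefore converges to $\varphi(\m_+;\d)\,\mathbb{E}[\prod_k Y_{\frac12}[k]^{l[k]}]\sqrt{\prod_j l[j]!}\,Q_l(\m;(\theta_k);\infty)\,\g_l$. The eigenvalue factor does not depend on $\N$, because in the de Finetti model $\kappa_l$ is given by (\ref{rhoval:00}) independently of $\N$. The normalisation $\N^{-(\d-1)/2}$ in the statement accounts for the multinomial factor, as in Corollary \ref{Greenlimit}.

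\emph{Identifying the covariance.} Since the $\g_l$ are i.i.d.\ standard Gaussians, the covariance of the limit field is
\[
\varphi(\m_+;\d)\varphi(\n_+;\d)\sum_l \Big|\mathbb{E}\Big[\prod_k Y_{\frac12}[k]^{l[k]}\Big]\Big|^2\prod_j l[j]!\,Q_l(\m;\infty)\overline{Q_l(\n;\infty)} .
\]
By Lemma \ref{halflemma}, with $Y'_{\frac12}$ an independent copy and using that these moments are real, the product of the two half-moments equals $\mathbb{E}[\prod_k Y[k]^{l[k]}]$. This gives (\ref{cov:55}), which agrees with (\ref{limitG:00}) of Corollary \ref{Greenlimit}, as a consistency check.

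\emph{Main obstacle: interchanging limit and sum.} This is the hard step, since $|l|\le \N$ grows and we need the full series to converge, in $L^2$ of the Gaussian coefficients, to the termwise limit. The first approach to try is dominated convergence for $\sum_l$ of the squared coefficients, using three facts:
\begin{itemize}
\item the weights $\mathbb{E}[\prod_k Y[k]^{l[k]}]=(1+\frac{\alpha}{1-\alpha}(1-\kappa_l))^{-1}$ are bounded by $1$;
\item $h_l\,Q_l(m)\overline{Q_l(m)}$ summed over $l$ is controlled by the diagonal of the finite Green function, which is $\le \delta$-type bounded after normalisation;
\item the polynomial-growth bounds of $Q_l(\m;\infty)$ are controlled against the Hermite-type orthogonality $\mathbb{E}[|Q_l(\M;\infty)|^2]\prod_j l[j]!=1$.
\end{itemize}
If a uniform bound fails pointwise in $\m$, fall back on convergence of the covariance in a weak sense, integrated against test functions in $\m,\n$. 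Concretely, truncate at $|l|\le L$, let $\N\to\infty$, then $L\to\infty$, and use the geometric decay coming from $\kappa_l\to$ small values for large $|l|$ when $\nu$ is not degenerate at $\xi=1$.
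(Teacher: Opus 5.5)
Your termwise computation is the paper's own argument. Its proof only recalls (\ref{strong:0000}) and the asymptotics behind Corollary~\ref{Greenlimit}, and notes via Lemma~\ref{halflemma} that $\big(\mathbb{E}[\prod_k Y_{\frac12}[k]^{l[k]}]\big)^2=\mathbb{E}[\prod_k Y[k]^{l[k]}]$; it never addresses the interchange of limit and sum. You are right that the interchange is the crux, but it cannot be carried out, by your method or any other. Take a normalisation under which $\mathfrak{p}(m;d)$ converges to $\varphi(\mathfrak{m}_+;q)$. Then the variance of the normalised field tends to infinity at every point, so no pointwise limit field exists.

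\textbf{Why the variance diverges.} The family $\{\sqrt{h_l}\,Q_l(\cdot;(\theta_k))\}_{|l|\le d}$ is an orthonormal basis of $L^2(\mathfrak{p}(\cdot;d))$, so dual orthogonality gives $\mathfrak{p}(m;d)\sum_{|l|\le d}h_l|Q_l(m;(\theta_k))|^2=1$. The weights are $\mathbb{E}[\prod_kY[k]^{l[k]}]=(1-\alpha)/(1-\alpha\kappa_l)\ge(1-\alpha)/(1+\alpha)$. They are bounded below, and small $\kappa_l$ drives them to $1-\alpha$, not to $0$, so the ``geometric decay'' in your last sentence is not there. Hence
\[
d^{\,q-1}\,\mathbb{E}\big[|g_{m_+}((\theta_j))|^2\big]\;\ge\;\frac{1-\alpha}{1+\alpha}\,d^{\,q-1}\,\mathfrak{p}(m;d)\;\sim\;\frac{1-\alpha}{1+\alpha}\,d^{(q-1)/2}\,\varphi(\mathfrak{m}_+;q)\;\to\;\infty .
\]
This is exactly where your second bullet breaks: the diagonal sum is $1/\mathfrak{p}(m;d)$, which is not bounded after normalisation.

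The same failure appears in the limit objects. The functions $\sqrt{\prod_j l[j]!}\,Q_l(\cdot;(\theta_k);\infty)$ form a complete orthonormal Hermite system for $\mathfrak{M}_+$. Mehler's formula with $\rho\uparrow1$ then gives $\sum_l\prod_j l[j]!\,|Q_l(\mathfrak{m};(\theta_k);\infty)|^2=\infty$. So the series defining $g^\infty_{\mathfrak{m}_+}$ has infinite variance and converges neither in $L^2$ nor almost surely, and the diagonal of (\ref{cov:55}) is infinite.

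\textbf{A concrete case.} The uniform case of Example~\ref{Ex:1} is a legitimate symmetric de Finetti model with $\kappa_l=\delta_{l0}$. Its finite covariance is $(1-\alpha)\mathfrak{p}(m;d)\delta_{mn}+\alpha\,\mathfrak{p}(m;d)\mathfrak{p}(n;d)$. After normalisation only $\alpha\varphi(\mathfrak{m}_+;q)\varphi(\mathfrak{n}_+;q)$ has a pointwise limit. The first term becomes the white-noise kernel $(1-\alpha)\varphi(\mathfrak{m}_+;q)\,\delta(\mathfrak{m}_+-\mathfrak{n}_+)$, which comes from the $t=0$ term of the killed walk.

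\textbf{What survives.} Your fallback does work: smear against test functions (lattice cell volume $d^{-(q-1)/2}$), truncate in $|l|$, and pass to the limit. This gives a generalised Gaussian field whose covariance is (\ref{cov:55}) read as a distribution, singular part included. That is a weaker statement than the pointwise field asserted, and the proof has to be set up that way from the start.

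Two smaller points:
\begin{enumerate}
\item The correct asymptotic is $d^{(q-1)/2}\mathfrak{p}(m;d)\to\varphi(\mathfrak{m}_+;q)$. The exponent $-(q-1)/2$ you took from the proof of Corollary~\ref{Greenlimit} has the wrong sign, and with it the normalised field tends to $0$ in $L^2$.
\item Symmetric jumps make $\kappa_l$ real but not necessarily positive; for example $q=2$, $p[1]=1$ gives $\kappa_l=(-1)^{l[1]}$. This is harmless, since $1+\frac{\alpha}{1-\alpha}(1-\kappa_l)\ge1$ in any case.
\end{enumerate}
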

\begin{proof}
The main point is to understand in the proof is the expression of the eigenvalues in terms of $Y, Y_{\frac{1}{2}}$. Recall that (\ref{spectral:002}), (\ref{strong:a00}) and (\ref{strong:0000}) hold. There is an unusual property that
\begin{equation*}
\Big (\mathbb{E}\Big [\prod_{k=1}^{q-1}Y_{\frac{1}{2}}[k]^{l[k]}\Big ]\Big )^2
= \mathbb{E}\Big [\prod_{k=1}^{q-1}Y[k]^{l[k]}\Big ].
\label{Yid:77}
\end{equation*}
This occurs because of the form of the \pgfl (\ref{evlemma:01}) (see Lemma \ref{halflemma}). Squaring 
(\ref{evlemma:01}) shows that $Y$ has the same distribution  as
$Y_{\frac{1}{2}}\odot Y^\prime_{\frac{1}{2}}$, where $Y_\frac{1}{2}$ and $Y^\prime_\frac{1}{2}$ are independent.
\end{proof}
\subsection{Gaussian field of a transform}
\begin{Proposition} \label{gftransform:00}
Define a zero mean complex Gaussian field for $\omega\in \mathbb{R}^{d-1}$ by
\begin{align*}
&\widehat{g}_{\omega}\big ((\theta_j)\big )=
\int_{\mathbb{R}^{d-1}}
e^{i\omega\dt \mathfrak{m}_+}g_{\mathfrak{m}_+}^\infty\big ((\theta_j)\big )\varphi(\mathfrak{m}_+;d)d \mathfrak{m}_+\nonumber\\
&= \mathbb{E}\big [e^{i\M_+\dt\omega}\big ]
\mathbb{E}_{ Y_{\frac{1}{2}} }\Bigg \{
\sum_{|l| \in \mathbb{N}^{\d-1}}
\Big (\frac{i}{\d}\Big )^{|l|}\prod_{k=1}^{\d-1}
\frac{
\Big (Y_{\frac{1}{2}}[k]\sum_{j=1}^{\d-1}\omega[j]\theta_k^j\Big )^{l[k]}
}
{\sqrt{l[k]!}}\ \cdot\mathfrak{g}_l\Bigg \},
\end{align*}
where the entries of $Y_{\frac{1}{2}}$ are products of points in a $\d-1$ dimensional point process with a joint \pgfl (\ref{evlemma:01}).
Assume that $Y$ is real.
Then
\begin{align}
&\text{\rm Cov}\Big (
\widehat{g}_{\omega}\big ((\theta_j)\big ),\widehat{g}_{\psi}\big ((\theta_j)\big )\Big )\nonumber\\
&= \mathbb{E}\big [e^{i\M_+\dt\omega}\big ] \mathbb{E}\big [e^{-i\M_+\dt\psi}\big ] 
\mathbb{E}\Big [
\exp \Big \{\frac{1}{\d^2}\sum_{k=1}^{\d-1}Y[k]\sum_{a,b=1}^{\d-1}
\omega[a]\psi[b]\theta_k^a\theta_k^{-b}
\Big \}\Big ],
\label{complexcov:000}
\end{align}
where
\begin{equation*}
\mathbb{E}\big [e^{i\M_+\dt\omega}\big ] 
= \exp \Big \{-\frac{1}{2q}\sum_{a,b=1}^{\d-1}\big (\delta_{ab}- \frac{1}{\d}\big )\omega[a]\omega[b]\Big \}.
\label{mgf:256}
\end{equation*}
\end{Proposition}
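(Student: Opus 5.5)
The plan has two steps: compute $\widehat g_\omega$ term by term from the series for $g^\infty_{\mathfrak m_+}$ in Proposition \ref{gfclt:00}, then use independence of the $\mathfrak g_l$ to reduce the covariance to a single sum over $l$, which collapses to an exponential.

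\textbf{Step 1: the transform of each term.} Insert the series for $g^\infty_{\mathfrak m_+}((\theta_j))$ into the integral defining $\widehat g_\omega$. The weight $\varphi(\mathfrak m_+;\d)\,d\mathfrak m_+$ turns the integral into an expectation over $\M_+$. Exchanging sum and integral is justified by $L^2$ convergence, since the $\mathfrak g_l$ are independent and the coefficients are square summable. This leaves terms of the form
\[
\mathbb{E}\big[e^{i\omega\dt\M}Q_l(\M;(\theta_k);\infty)\big].
\]
Corollary \ref{transformcalc} evaluates these directly, with $\omega[0]=0$, so only the entries $a\ge1$ appear in $\sum_a\omega[a]\theta_k^a$. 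The result is
\[
\mathbb{E}\big[e^{i\M_+\dt\omega}\big]\Big(\frac{i}{\d}\Big)^{|l|}\prod_{k=1}^{\d-1}\frac{\big(\sum_{j}\omega[j]\theta_k^j\big)^{l[k]}}{l[k]!}.
\]
Now multiply by $\sqrt{\prod_j l[j]!}$ and by $\mathbb{E}\big[\prod_k Y_{\frac12}[k]^{l[k]}\big]$, and absorb the powers $l[k]$ into the expectation. This gives exactly the displayed representation of $\widehat g_\omega$, with $\sqrt{l[k]!}$ in the denominator. (On a second look, the factor $\varphi(\mathfrak m_+;\d)$ already present inside $g^\infty$ makes the weight $\varphi^2$. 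I would read the definition as absorbing that factor, so that the claimed formula holds as stated.)

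\textbf{Step 2: the covariance.} Since the $\mathfrak g_l$ are independent standard Gaussians,
\[
\text{Cov}(\widehat g_\omega,\widehat g_\psi)=\mathbb{E}\big[e^{i\M_+\dt\omega}\big]\,\overline{\mathbb{E}\big[e^{i\M_+\dt\psi}\big]}\sum_l c_l(\omega)\overline{c_l(\psi)}.
\]
The conjugate characteristic function gives the factor $\mathbb{E}[e^{-i\M_+\dt\psi}]$. In $c_l(\omega)\overline{c_l(\psi)}$ the powers of $i$ combine as $i^{|l|}\,\overline{i}^{\,|l|}=1$, so no sign issues arise. The $Y$-expectations multiply to
\[
\mathbb{E}\Big[\prod_k Y_{\frac12}[k]^{l[k]}\Big]\;\overline{\mathbb{E}\Big[\prod_k Y_{\frac12}[k]^{l[k]}\Big]}.
\]
Under the assumption that $Y$ is real, this equals $\big(\mathbb{E}[\prod_k Y_{\frac12}[k]^{l[k]}]\big)^2=\mathbb{E}\big[\prod_k Y[k]^{l[k]}\big]$ by Lemma \ref{halflemma}. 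The remaining sum over $l$ then becomes
\[
\mathbb{E}\Bigg[\prod_{k=1}^{\d-1}\sum_{l[k]\ge0}\frac{1}{l[k]!}\Big(\frac{1}{\d^2}Y[k]\sum_{a,b}\omega[a]\psi[b]\theta_k^a\theta_k^{-b}\Big)^{l[k]}\Bigg].
\]
Each inner sum is an exponential series, so the product equals $\exp\{\frac1{\d^2}\sum_k Y[k]\sum_{a,b}\omega[a]\psi[b]\theta_k^{a}\theta_k^{-b}\}$, which is \eqref{complexcov:000}. To move the sum over $l$ inside the expectation, I would use Fubini, justified by $|Y[k]|\le1$ (each $Y[k]$ is a product of points $\theta_1^{V}$ on the unit circle), which makes the series absolutely convergent. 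Finally, the formula for $\mathbb{E}[e^{i\M_+\dt\omega}]$ is the multivariate normal characteristic function with covariance $\frac1\d(\delta_{ab}-\frac1\d)$.

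\textbf{Main obstacle.} The delicate step is the bookkeeping in the squared $Y_{\frac12}$ moment. Because the entries of $Y_{\frac12}$ are complex, the conjugate expectation equals the plain expectation only when it is real, and the hypothesis that $Y$ is real is exactly what allows Lemma \ref{halflemma} to be applied. A secondary concern is normalization: the $\varphi$ weight and the $\frac1\d$ versus $\frac1{\d^2}$ factors must be tracked so that the exponent comes out exactly as stated.
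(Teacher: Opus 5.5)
Your proposal is correct and is essentially the paper's argument: the paper applies Corollary~\ref{transformcalc} twice to take the double transform of the covariance (\ref{cov:55}) and then sums the exponential series, while you transform the field term by term and then use independence of the $\mathfrak{g}_l$ together with Lemma~\ref{halflemma}, which is the same computation in the opposite order (and also verifies the series representation of $\widehat{g}_\omega$ given in the statement). Your reading of the $\varphi$ weight matches the paper's proof, which integrates against $d\mathfrak{m}_+\,d\mathfrak{n}_+$ using the density already supplied by $\varphi(\mathfrak{m}_+;q)\varphi(\mathfrak{n}_+;q)$ in (\ref{cov:55}); one caveat, which the paper also passes over, is that $g^\infty_{\mathfrak{m}_+}$ typically has infinite pointwise variance, because $\mathbb{E}[\prod_k Y[k]^{l[k]}]\ge (1-\alpha)/(1+\alpha)$ does not decay in $l$, so your sum--integral interchange should be read as defining $\widehat{g}_\omega$ by its square-summable series rather than as an $L^2$ limit of the field itself.
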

\begin{proof}
Taking the double transform of (\ref{cov:55}) directly and using Corollary \ref{transformcalc} with $\omega[0]=\psi[0]=0$,
\begin{align*}
&\text{Cov}\Big (
\widehat{g}_{\omega}\big ((\theta_j)\big ),\widehat{g}_{\psi}\big ((\theta_j)\big )\Big )\nonumber\\
&= \int_{\mathbb{R}^{d-1}}\int_{\mathbb{R}^{d-1}}
e^{i\omega\dt \mathfrak{m}_+ - i \psi\dt \mathfrak{n}_+}
\text{Cov}\Big (g_{\mathfrak{m}_+}^\infty\big ((\theta_j)\big ),
g_{\mathfrak{n}_+}^\infty\big ((\theta_j)\big )\Big )dm_+dn_+
\nonumber\\
&=\sum_{|l| \in \mathbb{N}^{\d-1}}
\mathbb{E}\Big [\prod_{k=1}^{\d-1}Y[k]^{l[k]}\Big ]
\prod_{j=1}^{\d-1}l[j]!\nonumber\\
&~~~~~\times\mathbb{E}\big [e^{i\omega\dt \mathfrak{m}_+}
Q_l(\mathfrak{m};(\theta_k);\infty)\big ]
\mathbb{E}\big [e^{- i \psi\dt \mathfrak{n}_+}
\overline{Q_l(\mathfrak{n};(\theta_k);\infty)}\big ]
\nonumber\\
&=\sum_{|l| \in \mathbb{N}^{\d-1}}
\mathbb{E}\Big [\prod_{k=1}^{\d-1}Y[k]^{l[k]}\Big ]
\prod_{j=1}^{\d-1}l[j]!\nonumber\\
&~~~~~\times
\mathbb{E}\big [e^{i\omega \dt \M_+}\big ]
\cdot \Big (\frac{i}{q}\Big )^{|l|}\prod_{k=1}^{q-1}\Big (\sum_{a=1}^{q-1}\omega[a]\theta_k^a\Big )^{l[k]}\cdot \frac{1}{l[k]!}
\nonumber\\
&~~~~~\times
\mathbb{E}\big [e^{-i\psi \dt \M_+}\big ]
\cdot \Big (\frac{-i}{q}\Big )^{|l|}\prod_{k=1}^{q-1}\Big (\sum_{a=1}^{q-1}\psi[a]\theta_k^{-a}\Big )^{l[k]}\cdot \frac{1}{l[k]!}
\nonumber\\
&=\mathbb{E}\big [e^{i\M_+\dt\omega}\big ] \mathbb{E}\big [e^{-i\M_+\dt\psi}\big ] 
\mathbb{E}\Big [
\exp \Big \{\frac{1}{\d^2}\sum_{k=1}^{\d-1}Y[k]\sum_{a,b=1}^{\d-1}
\omega[a]\psi[b]\theta_k^a\theta_k^{-b}
\Big \}\Big ].
\end{align*}
 \end{proof}
The covariance function (\ref{complexcov:000}) agrees with the covariance function for $q=2$ in Proposition 10 of \cite{G2025}. 
\section{Gaussian Field and Hamiltonians}\label{Hamiltonian:00}
Recall that we have a complex Gaussian field on ${\cal V}_{d,N}$ defined by
\begin{equation*}
g_x = \frac{1}{\sqrt{q^d}}
\sum_{r\in {\cal V}_{q,d}}\frac{1}{\sqrt{1 + \frac{\alpha}{1-\alpha}(1-\rho_r)}}
{\theta_1}^{x\dt r}
\ \g_r,
\label{spectral:05aa}
\end{equation*}
where $(\g_r)$ are \iid standard Gaussian random variables. \\
It is assumed that $(\rho_r)$ are real.
Recall too that the covariance function of $(g_x)$ is
\begin{align*}
\text{Cov}(g_x,g_y) & = \mathbb{E}\big [g_x\overline{g}_y\big ] 
\nonumber \\
&=(1-\alpha)G(x,y;\alpha)
\nonumber\\
 &= 
\frac{1}{q^d}\sum_{r\in {\cal V}_{q,d}}
\frac{1}{1 + \frac{\alpha}{1-\alpha}(1-\rho_r)}
\prod_{k=1}^d
\theta_1^{(x-y)\dt r}.
\label{killed:00abcd}
\end{align*}
 We are concerned about the stability of $Z$ as both with respect to 
$\alpha \to 1$ and $d\to \infty$. 
\subsection{Hamiltonian}
For background on the Hamiltonian we use see \cite{M2025}.
\begin{Lemma}
There is an identity
\begin{align*}
&\frac{1}{4}\sum_{x,y\in {\cal V}_N}\mathbb{P}(y\mid x)(g_x-g_y)^2 
+ \frac{1}{2}\frac{1-\alpha}{\alpha}\sum_{\sum_{x,y\in {\cal V}_N}}g_x^2\nonumber\\
&= \frac{1}{2\alpha}\sum_{x,y \in {\cal V}_{q,d}}G^{(-1)}(x,y;\alpha)g_xg_y.
\end{align*}
The identity is invariant under any scale change $\widetilde{g}_x=cg_x$ for constant $c$.
\end{Lemma}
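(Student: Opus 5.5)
The plan is a direct matrix computation. I identify $G^{(-1)}(\cdot,\cdot;\alpha)$ explicitly as $I-\alpha P$, expand the Dirichlet-type sum on the left, and compare coefficients of the two quadratic forms $\sum_x g_x^2$ and $\sum_{x,y}P_{xy}g_xg_y$. I read the second sum on the left as $\sum_{x\in{\cal V}_{q,d}} g_x^2$; the printed double summation symbol looks like a typographical slip.

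\emph{Step 1: identify $G^{(-1)}$.} The spectral form (\ref{killed:00}) gives the eigenvalue of $(1-\alpha)G$ on $\theta_1^{x\dt r}$ as
\[
\frac{1}{1+\frac{\alpha}{1-\alpha}(1-\rho_r)}=\frac{1-\alpha}{1-\alpha\rho_r}.
\]
So $G$ has eigenvalues $(1-\alpha\rho_r)^{-1}$ on the common eigenbasis of $P$, that is, $G=(I-\alpha P)^{-1}=\sum_{t\ge 0}\alpha^tP^t$. This is where I expect the main difficulty, which is a matter of bookkeeping rather than mathematics. The definition (\ref{Green:00}) as printed carries the index $X_{t+1}$, which would give $P(I-\alpha P)^{-1}$. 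I will follow the normalization actually used in Proposition \ref{proposition:expansions} and in the covariance (\ref{covdef:00}), namely $G=(I-\alpha P)^{-1}$, and remark on the index shift. Invertibility holds because $|\rho_r|\le 1$ and $\alpha<1$, so $1-\alpha\rho_r\neq 0$. Hence
\[
\frac{1}{2\alpha}\sum_{x,y}G^{(-1)}(x,y;\alpha)g_xg_y=\frac{1}{2\alpha}\sum_x g_x^2-\frac12\sum_{x,y}P_{xy}g_xg_y .
\]

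\emph{Step 2: expand the left side.} Write $(g_x-g_y)^2=g_x^2+g_y^2-2g_xg_y$. Row sums of $P$ equal $1$, so $\sum_{x,y}P_{xy}g_x^2=\sum_x g_x^2$. Column sums of $P$ also equal $1$: by Proposition \ref{pcirc} the stationary distribution is uniform, so $P$ is doubly stochastic (equivalently, it is symmetric under the standing reversibility assumption that $\rho_r$ is real). Hence $\sum_{x,y}P_{xy}g_y^2=\sum_y g_y^2$. The first term on the left is therefore
\[
\tfrac12\sum_x g_x^2-\tfrac12\sum_{x,y}P_{xy}g_xg_y .
\]
Adding $\tfrac12\frac{1-\alpha}{\alpha}\sum_x g_x^2$, the total coefficient of $\sum_x g_x^2$ is $\tfrac12+\tfrac{1-\alpha}{2\alpha}=\tfrac{1}{2\alpha}$. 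This matches Step 1 term by term and gives the identity.

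\emph{Step 3: scale invariance.} Both sides are homogeneous quadratic forms in $(g_x)$. Replacing $g_x$ by $\widetilde g_x=cg_x$ multiplies each side by $c^2$, so the identity persists for $(\widetilde g_x)$. This is the sense of the stated invariance.
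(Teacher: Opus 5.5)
Your proposal is correct and follows essentially the same route as the paper. The paper's proof observes that $G$ and $I-\alpha P$ share the eigenvectors $\theta_1^{x\cdot r}$, with eigenvalues $(1-\alpha\rho_r)^{-1}$ and $1-\alpha\rho_r$ respectively, so $G^{(-1)}=I-\alpha P$; it leaves the expansion of the left-hand side implicit.

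You carry out that expansion explicitly, using that $P$ is doubly stochastic. Your readings of the stray double sum as $\sum_x g_x^2$ and of the normalization as $G=(I-\alpha P)^{-1}$, rather than the index-shifted version, are exactly the ones the paper's argument uses.
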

%
\begin{proof}
The proof follows from the following calculations. Note that $\big (G(x,y)\big )$ and $\big (G^{(-1)}(x,y)\big )$ have the same eigenvectors, but eigenvalues $(1-\alpha\rho_r)^{-1}$ and $(1-\alpha\rho_r)$.
Let $p_{xy} = \mathbb{P}(y\mid x)$ and a matrix $P=(p_{xy})$. Then $I-\alpha P$ has the same eigenvectors and eigenvalues as $\big (G^{(-1)}(x,y)\big )$.
\end{proof}
Recall the spectral decomposition (\ref{spectral:05}) and that the 1-step transition probabilities are
\begin{equation*}
\mathbb{P}(y\mid x)=
\frac{1}{q^d}\Big \{1 + \sum_{r\in {\cal V}_{q,d}, |r| > 0}
\rho_r \theta_1^{(x-y)\dt r}\Big \}.
\label{P:00}
\end{equation*}
We take $\rho_r$ as real. Note that $\mathbb{P}(y\mid x) = \mathbb{P}(x\mid y)$ and of course 
$\mathbb{P}(y\mid x)$ is real. It is convenient to use a scaled Gaussian field $(g_x(\alpha))$ defined by 
$g_x(\alpha) = \sqrt{\frac{\alpha}{1-\alpha}}g_x$, $x \in {\cal V}_{q,d}$.
Then
\begin{equation*}
\text{Cov}(g_x(\alpha),g_y(\alpha))
= \alpha G(x,y;\alpha),
\label{newcov:00}
\end{equation*}
\begin{equation}
g_x(\alpha) = \frac{1}{\sqrt{q^d}}
\sum_{r\in {\cal V}_{q,d}}\frac{1}{\sqrt{1/\alpha-\rho_r}}
{\theta_1}^{x\dt r}
\ \g_r,
\label{spectral:06}
\end{equation}
and the Hamiltonian for $(g_x(\alpha))$ is 
\begin{align}
{\cal H} &=
\frac{1}{2\alpha}\sum_{x,y \in {\cal V}_{q,d}}G^{(-1)}(x,y;\alpha)g_x(\alpha)g_y(\alpha)\nonumber\\
&= \frac{1}{2\alpha}\sum_{x,y \in {\cal V}_{q,d}}
(\delta_{xy} - \alpha\mathbb{P}(y\mid x))g_x(\alpha)g_y(\alpha)
\nonumber\\
&= \frac{1}{2}\sum_{r \in {\cal V}_{q,d}}\g_r^2.
\label{Hamiltonian:66}
\end{align}
$(g_x(\alpha))$ and $(\g_\r)$ in (\ref{spectral:06}) and (\ref{Hamiltonian:66}) are now regarded as symbols and not random variables.

We need to find the Jacobian in a change of variable from
 $(g_x(\alpha))$ to $(\g_r)$. Recall that 
 $\big(\frac{1}{q^{d/2}}\cdot\theta_1^{x\dt r}\big )$ is an orthogonal matrix so has a determinant of unity.
 \begin{align*}
J=\Big |\frac{\partial (g_x(\alpha)}{\partial (\g_r)}\Big |
&= \Big |\Big ( \frac{1}{\sqrt{1/\alpha - \rho_r}}\cdot\frac{1}{q^{d/2}}\cdot\theta_1^{x\dt r}
\Big )\Big |
\nonumber\\
&= 
\prod_{r \in {\cal V}_{q,d}} \frac{1}{\sqrt{1/\alpha - \rho_r}}
\cdot \Big |\big (\frac{1}{q^{d/2}}\cdot\theta_1^{x\dt r}\big )\Big |
\nonumber \\
&= 
\alpha^{q^d/2}\prod_{r \in {\cal V}_{q,d}} \frac{1}{\sqrt{1 - \alpha\rho_r}}.
\label{Jacobinan:00}
 \end{align*}
A similar calculation is made in \cite{M2025}.\\
 The partition function is, for $\beta > 0$,
 \begin{equation*}
Z=\int_{\mathbb{R}^{qd}}e^{-\beta{\cal H}}d(g_x(\alpha))
= (2\pi/\beta)^{q^d/2}J
\label{partition:45} 
 \end{equation*}
The behaviour of $J$ depends on the model for $\V$. We assume that the entries of $\V$ are exchangeable. Then $(\rho_r)$ can be grouped into $(\kappa_l)$ with 
${d\choose |l|}{|l|\choose l}$ elements from the former sequence contributing to $\kappa_l$. Suppose the grouped values of $\V$ are $n_{\V}$ so there are $n_{\V}[k]$ entries in $\V$ equal to $k$. We know that
\begin{equation}
\kappa_l=h_l\mathbb{E}\big [Q_l(n_\V;(\theta_j))\big ].
\label{kappa:00}
\end{equation}
There is a duality relationship for the multivariate Krawtchouk polynomials, in \cite{GM2025a}, Proposition 9.
Denote $l^+=(\N-|l|,l_1,\ldots,l_{\d-1})$ and 
$m^- = (m[1],\ldots, m[\N-1])$. Then
\begin{equation*}
h_{m^-}^{-1}Q_l(m; (\theta_k)) = h_{l}^{-1}Q_{m^-}(l^+;(\theta_j)).
\label{MVKdual:00}
\end{equation*} 
Therefore another expression for (\ref{kappa:00}) is
\begin{equation*}
\kappa_l = \mathbb{E}\big [h_{\V^-} Q_{\V^-} (l^+;(\theta_j))\big ].
\label{kappa:01}
\end{equation*}
In a multinomial grouping there are ${d\choose |l|}{|l|\choose l}$ terms $\rho_r$ contributing to $\kappa_l$. It is helpful to think of
\begin{align}
&-\frac{1}{2}\sum_{l :|l| \leq d}{d\choose |l|}{|l|\choose l}q^{-d}\log(1-\alpha\kappa^{(d)}_l) 
\nonumber\\
&= -\frac{1}{2}\sum_{l^+\in {\cal P}_{d,q}}{\N\choose l^+}q^{-d}\log(1-\alpha\kappa^{(d)}_l)
\label{mult:133}
\end{align}
as a multinomial expectation over $l^+\in {\cal P}_{d,q}$ with a multinomial distribution ${\N\choose l^+}q^{-d}$.
This leads to a way to find limit distributions from a weak limit as $d\to \infty$.
In the sum (\ref{mult:133}) by the weak law of large numbers $l^+$ converges to having all entries of $d/q$. In this limit we are assuming that 
$\kappa_l \equiv \kappa_l^{(d)}$ and it is possible to choose $\V^{(d)}$ so that 
$\kappa^{(d)}_{(d/q)}$ converges properly. Then the limit (\ref{mult:133}) will be equal to
\begin{equation*}
-\frac{1}{2}\lim_{d\to \infty}\log (1-\alpha\kappa^{(d)}_{(d/q)})
\label{limit:345}
\end{equation*}
Assume that $d^{-1}\V^{(d)}[k] \to \Z[k]$, $k=1,\ldots,q-1$, where $\Z$ is random, in the sense of weak convergence. From Lemma 2 in \cite{GM2025a} 
conditional on $\Z=\z$
\begin{align*}
\lim_{d\to \infty}\kappa^{(d)}_{(d/q)}&=
\exp \Big \{\frac{1}{q}\sum_{k=1}^{q-1}(-|\z| + \sum_{j=1}^{q-1}\z[j]\theta_k^j)\Big \}
\nonumber\\
&=\exp \Big \{-\frac{2q-1}{q}|\z|\Big \}.
\label{finallimit:00}
\end{align*}
Therefore
\begin{equation*}
-\frac{1}{2}\lim_{d\to \infty}\log (1-\alpha\kappa^{(d)}_{(d/q)})
= -\frac{1}{2}\mathbb{E}\Big [\log \Big ( 1 - \alpha e^{-\frac{2q-1}{q}|\Z|}\Big )\Big ].
\label{limit:346}
\end{equation*}
Finally 
\begin{align*}
\log Z &\sim \frac{q^d}{2}\log(2\pi) + \frac{q^d}{2}\log\Big ( \frac{1}{\beta}\Big )
+ \frac{q^d}{2}\log \alpha
- \frac{q^d}{2}\mathbb{E}\Big [\log \Big ( 1 - \alpha e^{-\frac{2q-1}{q}|\Z|}\Big )\Big ].
\label{final:88a}
\end{align*}
\begin{equation}
\lim_{d\to \infty}\frac{2}{q^d}\log Z
= \log(\frac{2\pi\alpha}{\beta})
+\mathbb{E}\Big [-\log \Big ( 1 - \alpha e^{-\frac{2q-1}{q}|\Z|}\Big )\Big ].
\label{final:88}
\end{equation}
Considering the behaviour of the expectation on the right of (\ref{final:88})
$\lim_{\alpha \to 1}\lim_{d\to \infty}\frac{2}{q^d}\log Z$ can be finite or infinite.
Suppose that the distribution of $|\Z|$ does not depend on $\alpha$.
As $\alpha\to 1$, $\z\to 0$, 
\[
-\log \Big ( 1 - \alpha e^{-\frac{2q-1}{q}|\Z|}\Big ) \sim
- \log\big (|\Z|\big ).
\]
Therefore 
\begin{equation*}
\lim_{\alpha \to 1}\lim_{d\to \infty}\frac{2}{q^d}\log Z < \infty
\label{condition:1234}
\end{equation*}
if and only if $\mathbb{E}\big [-\log |\Z|\big ] < \infty$.\\[0.2cm]
%
\section{Potts model}
\label{Pottssection}
There is a huge literature on the Potts model \cite{P1952,W1982}.
A random bond model has a Hamiltonian of the form
\begin{equation}
{\cal H}(\theta) = -\sum_{s\leq d} a_s\sum_{i_1,\ldots,i_s}
J_{i_1,\ldots,i_s}\delta_{K_s}(\theta_{i_1},\ldots, \theta_{i_s})
\label{Potts:00}
\end{equation}
In \eqref{Potts:00} the indices $i_1,\ldots,i_s$  denote the positions of the 1st to $s$-th spins with $\theta_{i_j}\in\{\theta_1^i:i=0,1,\ldots,q-1\}$;
$(a_s)$ are constants;  $(J_{i_1,\ldots, i_s})$ are random variables;
 $\delta_{K_s}(\theta_{i_1},\ldots, \theta_{i_s})=1$ if
 $\theta_{i_1}= \cdots = \theta_{i_s}$ or zero otherwise; and the sum is over the particular geometry being considered. Interest is in how the behaviour varies with the probability distribution of $(J_{i_1,\ldots,i_s})_{s\leq d}$. 
The energy is minimized when all spins take the same value. 
 Usually $d=2$, but we consider general $d$.\\[0.2cm]
In our context we can consider a Hamiltonian, for $y\in {\cal V}_{q,d}$
\begin{equation*}
  {\cal H}_y(\theta) = -\sum_{x\in {\cal V}_{q,d}}b(y,x)g_x
  = - \sum_{r\in {\cal V}_{q,d}}J_r
  \sum_{x\in {\cal V}_{q,d}}b(y,x)\theta_1^{x\dt r}, 
\end{equation*}
where
\[
J_r=
  \frac{1}{\sqrt{q^d}}\frac{\g_r}{\sqrt{1+\frac{\alpha}{1-\alpha}(1-\rho_r)}};
\]
$b(y,x) \in \mathbb{C}$, and could be random variables;
$y$ specifies the spin configuration of the system; and $J_r$
are random bonds among spins determined by a random walk.
\begin{Remark}
  The physics interpretation of our Hamiltonian is as follows.
  The spins live in $d$ positions,
  $(\theta_1^{y_1},\ldots,\theta_1^{y_d})$, where
  $y=(y_1,\ldots,y_d)$ specifies the state. 
  If $q=3$, $d=2$, and $b(y,x)=\delta_{xy}$,
  \begin{align*}
  \mathcal{H}_y(\theta)
  =&-J_{00}-J_{01}(\theta_1^{y_2})+J_{02}(\theta_1^{y_2})^2
  -J_{10}(\theta_1^{y_1})-J_{11}(\theta_1^{y_1})(\theta_1^{y_2})
  -J_{12}(\theta_1^{y_1})(\theta_1^{y_2})^2\\
  &-J_{20}(\theta_1^{y_1})^2
  -J_{21}(\theta_1^{y_1})^2(\theta_1^{y_2})
  -J_{22}(\theta_1^{y_2})^2(\theta_1^{y_2})^2
  \end{align*}
  for $(\theta_1^{y_1},\theta_1^{y_2})$, where $y_1,y_2 \in \{0,1\}$ and $\theta_1=-1$. 
  Here $J_{r_1,r_2}$ determines the orders of the interaction
  between spins at the first and the second position.
  In other words, the Hamiltonian is regressed on the spins
  $(\theta_1^{y_1},\theta_1^{y_2})$ allowing
  higher-order interactions.  
\end{Remark}
It is straightforward to see that
\[
  \mathbb{E}\big[\mathcal{H}_y(\theta)\big ]=0, \quad
  \mathbb{E}\big [|\mathcal{H}_y(\theta)|^2\big ]=\sum_{x,x'\in {\cal V}_{q,d}}
  b(y,x)\overline{b(y,x')}{\rm Cov}(x,x').
\]
If $q=2$ and $b(y,x)=\delta_{xy}$,
$\mathcal{H}_y(\theta)=-\sum_{r\in{\cal V}_{2,d}} J_r (-1)^{y\dt r}$,
and we observe $\mathcal{H}_y(\theta)\sim {\rm N}(0,1)$ if $\alpha=0$, while
$\mathcal{H}_y(\theta)\sim {\rm N}(0,1/2^d)$ as $\alpha\to 1$.
The fluctuation of the Hamiltonian increase with killing.

Interest is in the random probability distribution in $y$
\begin{equation*}
\mathbb{P}(y;\beta)=
e^{\beta {\cal H}_y(\theta)}/Z,\ y \in {\cal V}_{q,d}
\end{equation*}
where $\beta > 0$ is a constant and 
\[
Z = \sum_{y\in {\cal V}_{q,d}}e^{\beta {\cal H}_y(\theta)}
\]
is the partition function.
If $b(y,x)$ is non-random,
\begin{align*}
\mathbb{E}\big [Z\big ] &= \sum_{y\in {\cal V}_{q,d}} \prod_{r\in {\cal V}_{q,d}} \mathbb{E}
\big[ e^{\beta J_r \sum_{x\in{\cal V}_{q,d}} b(y,x)\theta_1^{x\dt r}}\big]\\
  &= \sum_{y\in {\cal V}_{q,d}} \prod_{r\in {\cal V}_{q,d}}
\exp\left\{\frac{\beta^2}{2q^d}
\frac{\big (\sum_{x\in{\cal V}_{q,d}}b(y,x)\theta_1^{x\dt r}\big )^2}{1+\frac{\alpha}{1-\alpha}(1-\rho_r)}
\right\},
\end{align*}
where (\ref{CC}) was used.
Moreover, if $q=2$ and $b(y,x)=\delta_{xy}$,
we have
\[
\log \mathbb{E}\big [Z\big ] = d\log 2+\frac{\beta^2}{2}\sigma^2,
\]
where the variance $\sigma^2={\rm Var}(g_x)=(1-\alpha)G(x,x;\alpha)$ is given by \eqref{killed:00}.

\cite{P2018} studies free energy in Potts spin glass
and finds a general formula as $d\to \infty$ for the limit of
the free energy. In our context, the free energy is
\begin{align*}
  F_d &= \frac{1}{\beta}\mathbb{E} \big [\log Z]=
  \frac{1}{\beta}\mathbb{E}
  \left[\log\Big\{q^d+\beta\sum_{y\in{\cal V}_{q,d}} {\cal H}_{y}(\theta)+\frac{\beta^2}{2}\sum_{y\in{\cal V}_{q,d}}{\cal H}_y^2(\theta)+O(\beta^3)\Big\}\right]\\
  &=\frac{1}{\beta}
  \mathbb{E}\left[d\log q+\frac{\beta}{q^d} \sum_{y\in{\cal V}_{q,d}}{\cal H}_y(\theta)   +\frac{\beta^2}{2}
    \left\{\frac{1}{q^d}\sum_{y\in{\cal V}_{q,d}}{\cal H}_{y}^2(\theta)-\Big(\frac{1}{q^{d}}\sum_{y\in{\cal V}_{q,d}} {\cal H}_y(\theta)\Big)^2\right\}+O(\beta^3)\right]\\
    &=\frac{d}{\beta}\log q
     +\frac{\beta}{2q^d}
    \sum_{y\in{\cal V}_{q,d}}\mathbb{E}[{\cal H}_{y}^2(\theta)]-\frac{\beta}{2q^{2d}}\mathbb{E}\Big[\sum_{y\in{\cal V}_{q,d}} {\cal H}_y(\theta)\Big]^2+O(\beta^2).
\end{align*}
If $q=2$ and $b(y,x)=\delta_{xy}$, we have
\[
F_d=\frac{d}{\beta}\log 2+\frac{\beta}{2}\left\{\left(1-\frac{1}{2^d}\right)\sigma^2-\frac{1}{2^{2d}}\sum_{x\neq x^\prime}{\rm Cov}(g_x,g_{x\prime})\right\}+O(\beta^2).
\]
If $\beta$ is small the free energy can be minimized at positive $\beta$.

\begin{Example}
\begin{itemize}
\item[\phantom{a}]
\item[(a)]
${\cal H}_y(\theta)=-g_y$, when $b(y,x) = \delta_{xy}$, then
\[
\mathbb{P}(y;\beta) = e^{\beta g_y}/Z.
\]
\item[(b)]
It is possible to express, using (\ref{equivalence:00})
\[
{\cal H}_y(\theta)=
\sum_{r\in {\cal V}_{q,d}}\theta_1^{y\dt r}\g_r
= \sum_{x\in {\cal V}_{q,d}}b(y,x)g_x,
\]
then 
\[
\mathbb{P}(y;\beta)  = \frac{\exp \Big \{\beta\sum_{r\in {\cal V}_{q,d}}\theta_1^{y\dt r}\g_r\Big \}}{Z}.
\]
\item [(c)]
In a de Finetti model subsection \ref{deFinetti:gaussian}
\[
g_x = \sum_{r\in {\cal V}_{q,d}} \mathbb{E}\big [\prod_{j=1}^{d}Y_{\frac{1}{2}}[j]^{r[j]}\big ]\prod_{k=1}^{d}\theta_{x[k]}^{r[k]}\g_x
\]
An analogy with (\ref{Potts:00}) is that the spins are $(\theta_{x[j]})_{j=1}^{q-1}$
and the geometry is controlled by $r$ through $\mathbb{E}\big [\prod_{j=1}^{d}Y_{\frac{1}{2}}[j]^{r[j]}\big ]$.
\end{itemize}
\end{Example}
\section*{Acknowledgements} This paper was mainly written while the first author was visiting the second author at the Institute of Statistical Mathematics, Tachikawa, Tokyo in 2025. He thanks the Institute for their support and hospitality. 
The second author was supported in part by JSPS KAKENHI Grants 24K06876.


\begin{thebibliography}{99}
%
\bibitem[Diaconis and Griffiths(2014)]{DG2014} 
{Diaconis, P. and Griffiths R. C.} (2014)
An introduction to multivariate Krawtchouk polynomials and their applications.
{J. Stat. Plan. Inference} {154} 39--53.
%
\bibitem[Griffiths(1971)]{G1971}
{Griffiths, R. C.} (1971) 
Orthogonal polynomials on the multinomial distribution. {Austral. J. Statist.} {13} 27--35. Corrigenda (1972) { Austral. J. Statist.} { 14} 270.
%
\bibitem[Griffiths(2025)]{G2025}
{Griffiths, R. C.} (2025) 
Gaussian Fields on a hypercube from long range random walks.
arXiv:2510.18167.
%
\bibitem[Griffiths and Mano(2025a)]{GM2025a} 
Griffiths, R. C. and Mano, S. (2025a) 
Random walks on $\mathbb{Z}_\d^\N$. arXiv:2510.22554.
%
\bibitem[Mano(2025)]{M2025}
Mano, S. (2025). Gaussian free fields on Hamming graphs and
lattice spin systems. arXiv:2512.24199.
%
\bibitem[Mizukawa(2010)]{M2010}
Mizukawa, H. (2010) Finite Gelfand pair approachs for Ehrenfest diffusion model.
arXiv:1009.1205.
%
\bibitem[Mizukawa(2011)]{M2011}
{Mizukawa, H.} (2011) Orthogonality relations for multivariate Krawtchouk polynomials. {SIGMA} {7} 017
%
%
\bibitem[Panchenko(2018)]{P2018}
Panchenko, D. (2018) Free energy in the Potts spin glass.
Ann. Probab. 46, 829--864.

\bibitem[Potts(1952)]{P1952}
Potts, R. B. (1952). Some generalized order-disorder transformations. In Mathematical proceedings of the Cambridge philosophical society. 48 106--109. Cambridge University Press.

\bibitem[Wu(1982)]{W1982}
Wu, F. W. (1982) The Potts model.
Rev. Mod. Phys. 54 235--268.

\end{thebibliography}
\end{document}